\newtheorem{theorem}{Theorem}[section]
\newtheorem{lemma}[theorem]{Lemma}
\newtheorem{proposition}[theorem]{Proposition}
\newtheorem{corollary}[theorem]{Corollary}
\newtheorem*{theorem*}{Theorem}
\theoremstyle{remark}
\newtheorem{remark}[theorem]{Remark}
\newtheorem{definition}[theorem]{Definition}
\newtheorem{example}[theorem]{Example}
\numberwithin{equation}{section}
\newcommand{\Z}{\mathbb{Z}}
\newcommand{\N}{\mathbb{N}}
\newcommand{\lr}{L_R(E)}
\begin{document}
\title[Ideal structure of Leavitt path algebras]{Ideal structure of Leavitt path algebras with coefficients in a unital commutative ring}

\author[H. Larki]{Hossein Larki}

\address{Department of Pure Mathematics\\ Faculty of Mathematics
and Computer Science\\ Amirkabir University of Technology, No. 424, Hafez
Ave.,15914\\ Tehran, Iran}
\email{h.larki@aut.ac.ir}

%\thanks{This research was supported by NSF Grant DMS-1234567}

\date{\today}

\subjclass[2010]{16W50, 16D70}

\keywords{Leavitt path algebra, $\mathbb{Z}$-graded ring, graded ideal, prime ideal, primitive ideal}

\begin{abstract}
For a (countable) graph $E$ and a unital commutative ring $R$, we analyze the ideal structure of the Leavitt path algebra $\lr$ introduced by Mark Tomforde. We first modify the definition of basic ideals and then develop the ideal characterization of Mark Tomforde. We also give necessary and sufficient conditions for the primeness and the primitivity of $\lr$, and we then determine prime graded basic ideals and left (or right) primitive graded ideals of $\lr$. In particular, when $E$ satisfies Condition (K) and $R$ is a field, they imply that the set of prime ideals and the set of primitive ideals of $\lr$ coincide.
\end{abstract}

\maketitle

%%%%%%%%%%%%%%%%%%%%%%%%%%%%%%%%%%%%%%%%%%%%%%%%
\section{Introduction}

There have been recently a great deal of interest in generalization of the Leavitt path algebras associated to directed graphs \cite{abr2,tom2,tom1}. Given a row-finite graph $E$ and a field $K$, in \cite{abr1,ara4} Leavitt path algebra $L_K(E)$ was defined by an algebraic analogue of the definition of graph $C^*$-algebra $C^*(E)$ described in \cite{rae1}. After that in \cite{abr2} this definition was extended for every countable graph $E$. The algebras $L_K(E)$ are generalizations of the algebras $L(1,n)$, $n\geq 2$, investigated by Leavitt \cite{lea}, and are a specific type of path $K$-algebras associated to a graph (modulo certain relations). This class of algebras is important for two reasons. First, some examples of many interesting classes of algebras can be seen as the Leavitt path algebras of some specific graphs rather than the classical Leavitt algebras $L(1,n)$, such as: (1) matrix rings $M_n(K)$ for $n\in \N\cup\{\infty\}$ (where $M_\infty(K)$ denotes $\N\times \N$ matrices with only a finite number of nonzero entries), (2) the Toeplitz algebra, and (3) the Laurent polynomials ring $K[x,x^{-1}]$. Second, they are useful because structural properties of the algebras can be related to simple observations about their underlying graphs. Concretely, the literature on Leavitt path algebras includes necessary and sufficient conditions on a directed graph $E$ so that the corresponding Leavitt path algebra $L_K(E)$ is simple \cite{abr1,tom1}, purely infinite simple \cite{abr2}, finite dimensional \cite{abr4}, locally finite \cite{abr5}, and semisimple \cite{abr3}.

Recently, Mark Tomforde introduced Leavitt path algebras $L_R(E)$ with coefficients in a unital commutative ring $R$ \cite{tom1} and proved two well-known uniqueness theorems for this class of algebras; namely, the Graded Uniqueness Theorem and the Cuntz-Krieger Uniqueness Theorem. He then defined basic ideals, and in the case that $E$ is a row-finite graph, characterized graded basic ideals of $L_R(E)$ by the saturated hereditary subsets of vertices in $E$. For non-row finite graphs, the set of admissible pairs in $E$ are considered instead of saturated hereditary subsets of $E^0$. However, in this case, basic ideals of $L_R(E)$ are more complicated, and to determined a one-to-one correspondence between graded basic ideals of $L_R(E)$ and admissible pairs in $E$, we need to modify the definition of basic ideals. We give a new definition of basic ideal such that when $E$ is row-finite, it is equivalent to that in \cite{tom1}. Then the ideal characterization of \cite{tom1} is developed for every countable graph.

In \cite{hon}, the primitive spectrum of a graph $C^*$-algebra $C^*(E)$ was characterized. By inspiring the ideas of \cite{hon}, Aranda Pino, Pardo, and Siles Molina determined the prime spectrum and primitivity of a Leavitt path algebras $\lr$ in \cite{ara3} when $E$ is row-finite and $R$ is a field. The knowledge of the prime and primitive Leavitt path algebras is a fundamental and necessary step towards the classification of these algebras. In particular, the prime spectrum for commutative rings is to carry information over from Algebra to Topology and vice versa. Also, we know that the primitive ideals of a ring naturally correspond to its irreducible representations. In this paper, we determine prime and primitive Leavitt path algebras by giving necessary and sufficient conditions for the coefficients ring $R$ and the underlying graph $E$. These results are a generalization of results in \cite{ara3} which were proved for the case that $E$ is row-finite and $R$ is a field. Moreover, by applying these, we give graph theoretic descriptions of prime and primitive graded basic ideals of $\lr$.

The present article is organized as follows. We begin by Section 2 to provide some basic facts about Leavitt path algebras. Most of our definitions in this section are from \cite{tom1}. For a directed graph $E$ and a unital commutative ring $R$, it is associated an $R$-algebra $L_R(E)$. These algebras are defined as the definition of graph $C^*$-algebras $C^*(E)$ and they have natural $\Z$-grading. In Section 3, we study ideal structure of $L_R(E)$. Our characterization are inspired by the structure of graded ideals of $L_K(E)$ in \cite{tom2}, and graded basic ideals of $L_R(E)$ in \cite{tom1} when $E$ is row-finite. We develop the ideal characterization of \cite{tom1} for non-row-finite cases. In spite of what claimed in \cite{tom1}, to do this we need to change the definition of basic ideal in \cite{tom1} so that graded basic ideals of $L_R(E)$ are corresponding to admissible pairs in $E$. However, our definition is equivalent to \cite[Definition 7.2]{tom1} when underlying graph is row-finite. Then graded basic ideals of $L_R(E)$ are determined by admissible pairs in $E$.

In Section 4, we give two necessary and sufficient conditions for the primeness of a Leavitt path algebra $\lr$ and then we apply this result to determine prime graded basic ideals. In particular, when $R$ is a field and $E$ satisfies Condition (K), our results determine the prime spectrum of $\lr$. In Section 5, the minimal left ideals and the minimal right ideals of $\lr$ generated by a single vertex  are characterized. We show that a left ideal $\lr v$ is minimal in $\lr$ if and only if $R$ is a field and $v$ is a line point. This is a natural generalization of the results in \cite[Section 2]{ara1} which will be needed in Section 6.

Finally, in Section 6, we give two necessary and sufficient conditions for the left and right primitivity of $\lr$ as a ring. We then characterize primitive graded ideals of $\lr$. In particular, when $R$ is a field and $E$ satisfies Condition (K), all primitive ideals of $\lr$ are characterized and we show that the set of primitive ideals and the set of prime ideals of $\lr$ coincide.

%%%%%%%%%%%%%%%%%%%%%%%%%%%%%%%%%%%%%%%%%%%%%%%%
\section{Basic facts about Leavitt path algebras}

In this section we provide the basic definitions and properties of Leavitt path algebras which will be used in the next sections. For more thorough introduction, we refer the reader to \cite{tom2,tom1}.

\begin{definition} A (directed) graph $E=(E^0,E^1,r,s)$ consists of a countable set of vertices $E^0$, a countable set of edges $E^1$, a source function $s:E^1\rightarrow E^0$, and a range function $r:E^1\rightarrow E^0$. A vertex $v\in E^0$ is called a \emph{sink} if $s^{-1}(v)=\emptyset$, is called \emph{finite emitter} if $0<|s^{-1}(v)|<\infty$, and is called \emph{infinite emitter} if $|s^{-1}(v)|=\infty$. If $s^{-1}(v)$ is a finite set for every $v\in E^0$, then $E$ is called \emph{row-finite}.
\end{definition}
If $e_1,\ldots,e_n$ are edges such that $r(e_i)=s(e_{i+1})$ for $1\leq i< n$, then $\alpha=e_1\ldots e_n$ is called a \emph{path} of length $|\alpha|=n$ with source $s(\alpha)=s(e_1)$ and range $r(\alpha)=r(e_n)$. For $n\geq 2$, we define $E^n$ to be the set of paths of length $n$, and $\mathrm{Path}(E):=\bigcup_{n=0}^\infty E^n$ the set of all finite paths. Note that we consider the vertices in $E^0$ to be paths of length zero.

A \emph{closed path based at $v$} is a path $\alpha\in \mathrm{Path}(E)\setminus E^0$ such that $v=s(\alpha)=r(\alpha)$. A closed path $\alpha=e_1\ldots e_n$ is called a \emph{cycle} if $s(e_i)\neq s(e_j)$ for $i\neq j$. We say that a closed path $\alpha=e_1\ldots e_n$ has an \emph{exit} if there is a vertex $v=s(e_i)$ and an edge $f\in s^{-1}(v)\setminus \{e_i\}$. If  $s(\alpha)=r(\alpha)$ and $s(e_i)\neq s(e_1)$ for every $i>1$, then $\alpha$ is called a \emph{closed simple path}.

\begin{definition}\label{defn1.3}
Let $(E^1)^*$ denote the set of formal symbols $\{e^*:e\in E^1\}$. We define $v^*=v$ for all $v\in E^0$, and for a path $\alpha=e_1\ldots e_n\in E^n$ we define $\alpha^*:=e_n^*\ldots e_1^*$. The elements of $E^1$ are said \emph{real edges} and the elements of $(E^1)^*$ are said \emph{ghost edges}.
\end{definition}

\begin{definition}\label{defn1.4}
Let $E$ be a graph and let $R$ be a ring. A Leavitt $E$-family is a set $\{v,e,e^*: v\in E^0,e\in E^1\}\subseteq  R$ such that $\{v:v\in E^0\}$ consists of pairwise orthogonal idempotents and the following conditions are satisfied:
\begin{enumerate}
\item $s(e)e=er(e)=e$ for all $e\in E^1$,
\item $r(e)e^*=e^*s(e)=e^*$ for all $e\in E^1$,
\item $e^*f=\delta_{e,f}r(e)$ for all $e,f\in E^1$, and
\item $v=\sum_{s(e)=v}ee^*$ whenever $0<|s^{-1}(v)|<\infty$.
\end{enumerate}
\end{definition}

\begin{definition}
Let $E$ be a graph and let $R$ be a unital commutative ring. The \emph{Leavitt path algebra of $E$ with coefficients in $R$}, denoted by $L_R(E)$, is the universal $R$-algebra generated by a Leavitt $E$-family.
\end{definition}

The universal property of $L_R(E)$ means that if $A$ is an $R$-algebra and $\{a_v,b_e,b_{e^*}:v\in E^0, e\in E^1\}$ is a Leavitt $E$-family in $A$, then there exists an $R$-algebra homomorphism $\phi:L_R(E)\rightarrow A$ such that $\phi(v)=a_v,~\phi(e)=b_e,$ and $\phi(e^*)=b_{e^*}$ for all $v\in E^0$ and $e\in E^1$.

When $R$ is a field, the existence of such universal algebra $L_R(E)$ was shown in \cite{abr2} (also see \cite[Remark 2.6]{tom2}). A similar argument shows that $L_R(E)$ exists when $R$ is a unital commutative ring (\cite[Remark 3.6]{tom1}). By \cite[Proposition 3.4]{tom1} we see that
\begin{equation}\label{2.1}
L_R(E)=\mathrm{span}_R\{\alpha\beta^*:\alpha,\beta\in \mathrm{Path}(E) ~\mathrm{and}~ r(\alpha)=r(\beta)\}
\end{equation}
and $rv\neq 0$ for all $v\in E^0$ and $r\in R\setminus\{0\}$. This implies that $r\alpha\beta^*\neq 0$ for all $r\in R\setminus\{0\}$ and paths $\alpha,\beta\in \mathrm{Path}(E)$ with $r(\alpha)=r(\beta)$, because $\alpha^*(r\alpha\beta^*)\beta=rr(\alpha)\neq 0$.

Recall that a \emph{set of local units} for a ring $R$ is a set $U\subseteq  R$ of commuting idempotents with the property that for any $x\in R$ there exists $u\in U$ such that $ux=xu=x$. If $E^0$ is finite, then $1=\sum_{v\in E^0}v$ is a unit for $L_R(E)$. If $E^0$ is infinite, then $L_R(E)$ does not have a unit, but if we list the vertices of $E$ as $E^0=\{v_1,v_2,\ldots\}$ and set $t_n:=\sum_{i=1}^nv_i$, then Equation \ref{2.1} implies that $\{t_n\}_{n\in\mathbb{N}}$ is a set of local units for $L_R(E)$.

Leavitt path algebras have a $\Z$-grading which is important to investigate. This property plays an action similar to the gauge action for graph $C^*$-algebras.

\begin{definition}
A ring $R$ is called $\mathbb{Z}$-\emph{graded} (or, more concisely, \emph{graded}) if there is a collection of additive subgroups $\{R_k\}_{k\in\mathbb{Z}}$ of $R$ such that
\begin{enumerate}
\item $R=\bigoplus_{k\in\mathbb{Z}}R_k$
\item $R_jR_k\subseteq  R_{j+k}$ for all $j,k\in\mathbb{Z}$.
\end{enumerate}
The subgroup $R_k$ is said the \emph{homogeneous component of $R$ of degree $k$}. In this case an ideal $I$ of $R$ is called a \emph{graded ideal} if $I=\bigoplus_{k\in\mathbb{Z}}(I\cap R_k)$. Furthermore, if $\phi:R\rightarrow S$ is a ring homomorphism between graded rings, then $\phi$ is a \emph{graded ring homomorphism (of degree zero)} if $\phi(R_k)\subseteq S_k$ for all $k\in\mathbb{Z}$.
\end{definition}

Note that the kernel of a graded homomorphism is a graded ideal. Also, if $I$ is a graded ideal of a ring $R$, then the quotient $R/I$ is naturally graded with homogeneous components $\{R_k+I\}_{k\in\mathbb{Z}}$ and the quotient map $R\rightarrow R/I$ is a graded homomorphism. If $E$ is a graph and $R$ is a unital commutative ring, then \cite[Proposition 4.7]{tom1} implies that the Leavitt path algebra $L_R(E)$ is graded with the natural homogeneous components
$$L_R(E)_k:=\mathrm{span}_R\left\{\alpha\beta^*:\alpha,\beta\in \mathrm{Path}(E),~\mathrm{and}~|\alpha|-|\beta|=k\right\}.$$

%%%%%%%%%%%%%%%%%%%%%%%%%%%%%%%%%%%%%%%%%%%%%%%%
%%%%%%%%%%%%%%%%%%%%%%%%%%%%%%%%%%%%%%%%%%%%%%%%
\section{Ideals of Leavitt path algebras}

In this section a special class of graded ideals in Leavitt path algebras, namely graded basic ideals, is characterized. These results will be a generalization of both the results of \cite[Sections 5 and 6]{tom2} which were for Leavitt path algebras with coefficients in a field and the results of \cite[Section 7]{tom1} which were for Leavitt path algebras associated to row-finite graphs. This characterization will be done by admissible pairs in $E$.

Let $E$ be a graph. Recall that a subset $X\subseteq E^0$ is called \emph{hereditary} if for any edge $e\in E^1$ with $s(e)\in X$ we have that $r(e)\in X$. Also, we say that $X\subseteq E^0$ is \emph{saturated} if for each finite emitter $v\in E^0$ with $r(s^{-1}(v))\subseteq X$ we have $v\in X$. The saturation of a subset $X\subseteq E^0$, denoted by $\overline{X}$, is the smallest saturated subset of $E^0$ containing $X$.

Observe that the intersections of saturated hereditary subsets are saturated and hereditary. Also, unions of saturated hereditary subsets are hereditary, but not necessarily saturated. Note that if $X$ is hereditary, then $\overline{X}$ is hereditary.

\begin{definition}
Suppose that $H$ is a saturated hereditary subset of $E^0$. The set of vertices in $E^0$ which emit infinitely many edges into $H$ and finitely many into $E^0\setminus H$ is denoted by $B_H$; that is
$$B_H:=\left\{v\in E^0\setminus H:|s^{-1}(v)|=\infty~\mathrm{and}~0<|s^{-1}(v)\cap r^{-1}(E^0\setminus H)|<\infty\right\}.$$
Also, for any $v\in B_H$ we denote
$$v^H:=v-\sum_{\substack{s(e)=v\\ r(e)\notin H}}ee^*.$$
\end{definition}

\begin{definition}\label{defn3.2}
If $H$ is a saturated hereditary subset of $E^0$ and $S\subseteq B_H$, then $(H,S)$ is called an \emph{admissible pair} in $E$.
\end{definition}

If $(H,S)$ is an admissible pair in $E$, we denote by $I(H,S)$ the ideal of $L_R(E)$ generated by $\{v:v\in H\}\cup\{v^H:v\in S\}$.

\begin{lemma}\label{lem3.3}
Let $E$ be a graph and let $R$ be a unital commutative ring. If $(H,S)$ is an admissible pair in $E$, then
$$I(H,S)=\mathrm{span}_R\left(\{\alpha\beta^*:r(\alpha)=r(\beta)\in H\}\cup\{\alpha v^H\beta^*:r(\alpha)=r(\beta)=v\in S\}\right)$$
and $I(H,S)$ is a self-adjoint graded ideal of $L_R(E)$.
\end{lemma}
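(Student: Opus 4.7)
Let $J$ denote the $R$-span on the right-hand side of the claimed identity. The plan is to prove $I(H,S) = J$; gradedness and self-adjointness of $I(H,S)$ then follow at once, because each $\alpha\beta^*$ and each $\alpha v^H\beta^*$ is homogeneous of degree $|\alpha|-|\beta|$ (using $v^H \in L_R(E)_0$), and the involution fixes the spanning set via $(v^H)^*=v^H$ together with the interchange $(\alpha\beta^*)^* = \beta\alpha^*$, which preserves the range conditions.

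One inclusion is essentially formal: a first-type spanning element factors as $\alpha \cdot r(\alpha) \cdot \beta^*$ with $r(\alpha)\in H$, and a second-type element is literally a product involving the generator $v^H$, so $J \subseteq I(H,S)$. Taking $\alpha = \beta = v$ shows that both $v$ (for $v\in H$) and $v^H$ (for $v\in S$) lie in $J$, so the reverse inclusion reduces to showing that $J$ is a two-sided ideal. Since $L_R(E)$ is generated by $\{w,e,e^* : w\in E^0,\, e\in E^1\}$, it suffices to verify that left and right multiplication by each of these generators sends a spanning element of $J$ into $J$.

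Multiplication by a vertex is immediate. When the multiplying edge acts on a nontrivial $\alpha$ or $\beta$ — so that the relevant reduction is of the form $\alpha\beta^*\cdot f^* = \alpha(f\beta)^*$ when $r(f)=s(\beta)$, or one of its variants — the product simply extends or shortens the path and preserves the type and the range condition, with hereditariness of $H$ ensuring that any new ``middle vertex'' still lies in $H$ when needed. The delicate case arises when an edge meets $v^H$ directly; this happens precisely when $\alpha$ or $\beta$ equals the trivial path $v\in S$. Expanding $v^H = v - \sum_{s(f)=v,\,r(f)\notin H} ff^*$ and applying the Cuntz--Krieger relation $e^*f = \delta_{e,f}r(e)$, a direct computation yields
\[
v^H f \;=\; \begin{cases} f, & s(f)=v \text{ and } r(f)\in H,\\ 0, & \text{otherwise,} \end{cases}
\qquad
e^* v^H \;=\; \begin{cases} e^*, & s(e)=v \text{ and } r(e)\in H,\\ 0, & \text{otherwise,}\end{cases}
\]
together with the mirror identities for $fv^H$ and $v^H e^*$. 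Plugging these into $\alpha v^H \beta^* \cdot f$ and $e^* \cdot \alpha v^H \beta^*$ (and their opposites), each product either vanishes, stays of the second type (with the same $v\in S$), or collapses to a first-type element whose middle vertex, being the range of some edge landing in $H$, lies in $H$.

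The main obstacle is organising this case analysis at the $v^H$-interface, since one must track independently whether $\alpha$ (respectively $\beta$) is the trivial path and whether the relevant edge ranges into $H$. Once the four identities above are secured, the remaining verifications are routine manipulations of products $\mu\lambda^*$ via the spanning description \eqref{2.1}, and the equality $I(H,S) = J$ follows.
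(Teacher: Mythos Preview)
Your proof is correct and follows the same approach as the paper: both show $J \subseteq I(H,S)$ by factoring through the generators, observe that $J$ contains the generators, and read off gradedness and self-adjointness from the form of the spanning elements. The paper simply asserts the reverse inclusion in one line (implicitly taking for granted that $J$ is an ideal), whereas you supply the case analysis verifying closure under multiplication by edges and ghost edges; one small remark is that your ``mirror identities'' for $fv^H$ and $v^He^*$ (with $r(f)=r(e)=v$) do not simplify as the displayed ones do, but they are not needed---those products are already second-type spanning elements with $\alpha=f$ or $\beta=e$.
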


\begin{proof}
Let $J$ denote the right-hand side of the above equation. For $\alpha,\beta,\mu,\nu\in E^*$ with $r(\alpha)=r(\beta)\in H$ and $r(\mu)=r(\nu)=v \in S$, we have $\alpha\beta^*=\alpha r(\alpha)\beta^*\in I(H,S)$ and $\mu v^H \beta^*\in I(H,S)$. Hence, $J\subseteq I(H,S)$. The converse follows from the fact that $J$ contains the generators of $I(H,S)$, and consequently, $I(H,S)=J$.

Since $\alpha\beta^*$ and $\alpha v^H\beta^*$ are homogenous of degree $|\alpha|-|\beta|$, $\overline{\alpha\beta^*}=\beta\alpha^*$, and $\overline{\alpha v^H\beta^*}=\beta v^H\alpha^*$, we see that $I(H,S)$ is graded and self-adjoint.
\end{proof}

When $R$ is a field, \cite[Theorem 5.7(1)]{tom2} yields that there are a one-to-one correspondence between graded ideals of $L_R(E)$ admissible pairs in $E$. This result is analogous to \cite[Theorem 3.6]{bat1} for graph $C^*$-algebras. If $R$ is a unital commutative ring, the graded ideals of the Leavitt path algebras $L_R(E)$ are more complicated (see \cite[Example 7.1]{tom1}).

In the case that $E$ is a row-finite graph, for every saturated hereditary subset $H$ of $E^0$ we have that $B_H=\emptyset$. Hence every admissible pair in $E$ is of the form $(H,\emptyset)$ for a saturated hereditary subset $H$. In \cite{tom1}, the author defined basic ideals of $\lr$; an ideal $I$ is called basic if $rv\in I$ implies $v\in I$ for $v\in E^0$ and $r\in R\setminus\{0\}$. Then there was shown that the lattice of graded basic ideals of $\lr$ is isomorphic to the lattice of saturated hereditary subsets of $E^0$.

In spite of what claimed in \cite{tom1}, when $E$ is a non-row-finite graph, the set of graded basic ideals of $\lr$ may not be corresponding to the set of admissible pairs in $E$ (see Example \ref{ex3.4}). In Definition \ref{defn3.5}, we modify the definition of basic ideal and then we show that graded basic ideals of $\lr$ are one-to-one corresponding to the admissible pairs in $E$. However, when $E$ is row-finite our definition of basic ideal is equivalent to its definition in \cite{tom1}.

\begin{example}\label{ex3.4}
Suppose that $E$ is the graph
$$\includegraphics[width=4cm]{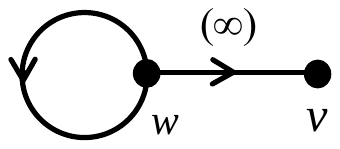}$$
where the symbol ($\infty$) indicates infinitely many edges from $w$ to $v$. Then $H=\{v\}$ is a saturated hereditary subset of $E^0$ and $B_H=\{w\}$. Let $I$ be the ideal of $L_{\mathbb{Z}}(E)$ generated by $\{v,2w^H\}$. Similar to the proof of Lemma \ref{lem3.3}, we have
$$I=\mathrm{span}_{\mathbb{Z}}\left(\{\alpha\beta^*:r(\alpha)=r(\beta)=v\}\cup\{\alpha(2w^H)\beta^*:r(\alpha)=r(\beta)=w\}\right),$$
and hence $I$ is graded. Furthermore, $I$ is a basic ideal in the sense of \cite[Definition 7.2]{tom1}, because the only nonzero multiples of elements in $\Z$ at a vertex in $I$ are of the form of $mv$ for $m\in\mathbb{Z}$. However, $I$ is not of the form $I(H,S)$ for an admissible pair $(H,S)$ in $E$. Indeed, $H:=\{x\in E^0:x\in I\}=\{v\}$ and $S:=\{x\in B_H:x^H\in I\}=\emptyset$, and also we have $I\neq I(H,\emptyset)$ (because $2w^H\in I\setminus I(H,\emptyset)$).
\end{example}

This problem occurs because an ideal $I(H,S)$ is generated not only by the vertices in $H$ but also by $v^H$s for $v\in S$. While, when $E$ is row-finite, we have $B_H=\emptyset$. However, by a small modification in the definition of basic ideal in \cite[Definition 7.2]{tom1}, we can solve the problem. We redefine a basic ideal such that every graded basic ideal of $L_R(E)$ will be generated by an admissible pair $(H,S)$ in $E$.

\begin{definition}\label{defn3.5}
Let $E$ be a graph and let $R$ be a unital commutative ring. An ideal $I$ of $L_R(E)$ is called \emph{basic} if $rx\in I$ implies $x\in I$, where either $x\in E^0$ or $x$ is of the form $x=v-\sum_{i=1}^ne_ie_i^*$ for $v\in E^0$ and $s(e_i)=v$ ($1\leq i\leq n$).
\end{definition}

Note that when $E$ is a row-finite graph, our definition of basic ideal is equivalent to \cite[Definition 7.2]{tom1}. (This is obtained by comparing Theorem \ref{thm3.10}(4) with \cite[Theorem 7.9(1)]{tom1}.) Also, suppose that $I$ is a basic ideal of a Leavitt path algebra $L_R(E)$ and set $H:=I\cap E^0$. If $rv^H\in I$ for some $v\in B_H$ and some $r\in R\setminus\{0\}$, then we have $v^H\in I$.

As for graph $C^*$-algebras, to characterize ideals and quotients of a Leavitt path algebra $L_R(E)$ we usually use some special graphs. We introduced two of them in the following definition. In Definition \ref{defn3.12}, we will define another one.

\begin{definition}\label{defn3.6}
Let $E$ be a graph and let $(H,S)$ be an admissible pair in $E$.
\begin{enumerate}
\item The quotient of $E$ by $(H,S)$ is the graph $E/(H,S)$ defined by\\
   $(E/(H,S))^0:=(E^0\setminus H)\cup \{v': v\in B_H\setminus S\}$\\
   $(E/(H,S))^1:=\{e\in E^1:r(e)\in E^0\setminus H\}\cup\{e':e\in E^1,r(e)\in B_H\setminus S\}$\\
  and $r,s$ are extended to $(E/(H,S))^0$ by setting $s(e')=s(e)$ and $r(e')=r(e)'$.
\item We denote $E_{(H,S)}$ the graph defined by\\
   $E^0_{(H,S)}:=H\cup S$\\
   $E^1_{(H,S)}:=\{e\in E^1:s(e)\in H\}\cup \{e\in E^1:s(e)\in S ~\mathrm{and}~ r(e)\in H\}$\\
   and we restrict $r$ and $s$ to $E^1_{(H,S)}$.
\end{enumerate}
\end{definition}

A similar argument as in the first paragraph of the proof of \cite[Theorem 5.7]{tom2} gives the following lemma.

\begin{lemma}\label{lem3.7}
Let $E$ be a graph and let $R$ be a unital commutative ring. If $(H,S)$ is an admissible pair in $E$ and $I(H,S)$ is the ideal of $L_R(E)$ defined in Definition \ref{defn3.2}, then $\{v\in E^0:v\in I(H,S)\}=H$ and $\{v\in B_H: v^H\in I(H,S)\}=S$.
\end{lemma}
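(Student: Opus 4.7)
The plan is to establish the nontrivial inclusions by producing, via the universal property of $L_R(E)$, an $R$-algebra homomorphism $\phi\colon L_R(E)\to L_R(E/(H,S))$ that vanishes on $I(H,S)$ yet does not kill any $v\in E^0\setminus H$ nor any $v^H$ with $v\in B_H\setminus S$. The inclusions $H\subseteq\{v\in E^0:v\in I(H,S)\}$ and $S\subseteq\{v\in B_H:v^H\in I(H,S)\}$ are immediate from the definition of $I(H,S)$.

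For the reverse, using the canonical generators $\{w,f,f^*\}$ of $L_R(E/(H,S))$ with $w\in(E/(H,S))^0$ and $f\in(E/(H,S))^1$, define a Leavitt $E$-family $\{a_v,b_e,b_{e^*}\}$ in $L_R(E/(H,S))$ by setting $a_v=0$ if $v\in H$, $a_v=v$ if $v\in E^0\setminus H$ and $v\notin B_H\setminus S$, and $a_v=v+v'$ if $v\in B_H\setminus S$; analogously set $b_e=0$, $b_e=e$, or $b_e=e+e'$ according as $r(e)\in H$, $r(e)\in E^0\setminus H$ with $r(e)\notin B_H\setminus S$, or $r(e)\in B_H\setminus S$, with $b_{e^*}$ the obvious adjoint expressions.

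The core technical step is verifying relations~(3) and~(4) of Definition~\ref{defn1.4}. For (3) with $e=f$ and $r(e)\in B_H\setminus S$, the product $(e^*+e'^*)(e+e')$ collapses to $r(e)+r(e)'=a_{r(e)}$ because $e^*e'=0=e'^*e$ by (3) in $L_R(E/(H,S))$. For (4) with a finite emitter $v\in E^0\setminus H$ one has $v\notin B_H$ so $a_v=v$, and saturation of $H$ guarantees $v$ is not a sink in $E/(H,S)$, so (4) applies in $L_R(E/(H,S))$ to match $a_v$ with $\sum_{s(e)=v}b_eb_{e^*}$. The crucial cancellation is that the cross terms $e\cdot e'^*$ arising when one expands $(e+e')(e^*+e'^*)$ vanish, since $e\cdot e'^*=e\cdot r(e)\cdot r(e)'\cdot e'^*=0$ because $r(e)$ and $r(e)'$ are distinct, hence orthogonal, vertices in $E/(H,S)$.

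Once $\phi$ is obtained from the universal property, one checks directly that $\phi$ kills all generators of $I(H,S)$: clearly $\phi(v)=0$ for $v\in H$, and for $v\in S$ one has $\phi(v)=v$ and $\phi(v^H)=v-\sum_{s(e)=v,\,r(e)\notin H}b_eb_{e^*}=0$ by relation~(4) applied to the finite emitter $v$ in $E/(H,S)$. Hence $I(H,S)\subseteq\ker\phi$. Conversely, if $v\in E^0\setminus H$ then $\phi(v)=a_v$ is a nonzero sum of orthogonal vertices in $L_R(E/(H,S))$, so $v\notin I(H,S)$; and if $v\in B_H\setminus S$ then $\phi(v^H)=(v+v')-v=v'\neq 0$, so $v^H\notin I(H,S)$. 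The principal obstacle is the verification of~(4) with cross terms between the primed and unprimed edges, which is resolved by the orthogonality observation above.
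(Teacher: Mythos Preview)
Your argument is correct and follows essentially the same route as the paper: construct the homomorphism $\phi\colon L_R(E)\to L_R(E/(H,S))$ via the Leavitt $E$-family $\{a_v,b_e,b_{e^*}\}$, check that $I(H,S)\subseteq\ker\phi$, and then read off the reverse inclusions from the nonvanishing of $\phi(v)$ for $v\notin H$ and $\phi(v^H)=v'$ for $v\in B_H\setminus S$. The paper merely cites the analogous argument in \cite{tom2} for Lemma~\ref{lem3.7}, but the identical construction is written out in the proof of Lemma~\ref{lem3.9}, matching your $a_v,b_e,b_{e^*}$ verbatim.
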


\begin{lemma}[\cite{tom1}, Lemma 7.8]\label{lem3.8}
Let $E$ be a graph and let $R$ be a unital commutative ring. If $X$ is a hereditary subset of $E^0$ and $I(X)$ is the ideal of $L_R(E)$ generated by $\{v: v\in X\}$, then $I(X)=I(\overline{X},\emptyset)$.
\end{lemma}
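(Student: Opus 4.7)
The plan is to prove the two inclusions separately, with the nontrivial direction handled by a straightforward transfinite (or countable) induction on the construction of the saturation $\overline{X}$.

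For the inclusion $I(X) \subseteq I(\overline{X},\emptyset)$, I would simply observe that $X \subseteq \overline{X}$, so every generator $v \in X$ of $I(X)$ lies among the generators $\{v : v \in \overline{X}\}$ of $I(\overline{X},\emptyset)$. No further work is needed.

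For the reverse inclusion $I(\overline{X},\emptyset) \subseteq I(X)$, it suffices to show $v \in I(X)$ for every $v \in \overline{X}$. I would build $\overline{X}$ inductively: set $X_0 := X$ and, for $n \geq 0$,
\[
X_{n+1} := X_n \cup \bigl\{v \in E^0 : 0 < |s^{-1}(v)| < \infty \text{ and } r(s^{-1}(v)) \subseteq X_n \bigr\},
\]
so that $\overline{X} = \bigcup_{n \geq 0} X_n$ (this is the standard description of the smallest saturated set containing the hereditary set $X$; hereditariness is preserved at each step since $X$ is hereditary). I claim $X_n \subseteq I(X)$ for every $n$, proved by induction. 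The base case $n=0$ is immediate. For the inductive step, take $v \in X_{n+1} \setminus X_n$; then $v$ is a finite emitter with $r(e) \in X_n \subseteq I(X)$ for every $e \in s^{-1}(v)$. Applying the Leavitt relation (4) from Definition \ref{defn1.4} gives
\[
v \;=\; \sum_{s(e)=v} e e^* \;=\; \sum_{s(e)=v} e\, r(e)\, e^*,
\]
and each summand lies in $I(X)$ because $r(e) \in I(X)$. Hence $v \in I(X)$, completing the induction and yielding $\overline{X} \subseteq I(X)$.

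The two inclusions together give $I(X) = I(\overline{X},\emptyset)$. There is no real obstacle here — the only subtlety is making sure the saturation is reached in countably many stages (which is fine since $E^0$ is countable) and that the relation (4) is only invoked at finite emitters, which is exactly how the saturation step is defined.
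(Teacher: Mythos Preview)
Your argument is correct. The paper does not actually supply a proof of this lemma; it is quoted verbatim from \cite[Lemma~7.8]{tom1}, so there is no in-paper proof to compare against. Your inductive construction of the saturation $\overline{X}=\bigcup_{n\ge 0}X_n$ together with relation~(4) at each finite emitter is exactly the standard argument one would expect (and is essentially what appears in Tomforde's paper). One minor remark: the countable-stage claim does not rely on $E^0$ being countable---it follows because saturation only tests \emph{finite} emitters, so any $v$ witnessing a failure of saturation for $\bigcup_n X_n$ would already have $r(s^{-1}(v))\subseteq X_N$ for some $N$; your parenthetical is thus unnecessary, though harmless in the present countable setting.
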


\begin{lemma}\label{lem3.9}
Let $E$ be a graph and let $R$ be a unital commutative ring. If $(H,S)$ is an admissible pair in $E$, then $I(H,S)$ is a basic ideal.
\end{lemma}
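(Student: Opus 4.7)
The plan is to construct a ring homomorphism $\pi \colon L_R(E) \to L_R(E/(H,S))$ with $I(H,S) \subseteq \ker \pi$, and then deduce the basic condition from the fact (recorded after Equation \ref{2.1}) that a nonzero $R$-multiple of a vertex idempotent in any Leavitt path algebra is nonzero. This strategy mirrors the arguments of \cite[Theorem 5.7]{tom2} and \cite[Lemma 7.9]{tom1}, but requires extra bookkeeping to track the primed vertices $v'$ of $E/(H,S)$ that arise for $v \in B_H \setminus S$.

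To construct $\pi$, I will invoke the universal property of $L_R(E)$ by specifying a Leavitt $E$-family $\{a_v, b_e, b_{e^*}\}$ in $L_R(E/(H,S))$: set $a_v = 0$ if $v \in H$, $a_v = v$ if $v \in (E^0 \setminus H) \setminus (B_H \setminus S)$, and $a_v = v + v'$ if $v \in B_H \setminus S$; set $b_e = 0$ if $r(e) \in H$, $b_e = e$ if $r(e) \in (E^0 \setminus H) \setminus (B_H \setminus S)$, and $b_e = e + e'$ if $r(e) \in B_H \setminus S$; and finally take $b_{e^*} = b_e^*$. The Leavitt relations of Definition \ref{defn1.4} will be checked directly using the orthogonality of distinct vertices in $L_R(E/(H,S))$ (in particular $r(e) \cdot r(e)' = 0$ makes the cross term $e (e')^*$ vanish), and relation (4) at a finite emitter $v \notin H$ reduces to the corresponding relation at the finite emitter $v$ of $E/(H,S)$. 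The resulting homomorphism $\pi$ kills every $v \in H$ by construction, and annihilates $v^H$ for $v \in S$ because $v \in S \subseteq B_H$ is a finite emitter in $E/(H,S)$ and relation (4) there provides the needed cancellation; hence $I(H,S) \subseteq \ker \pi$.

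To prove basicness, suppose $rx \in I(H,S)$ with $r \neq 0$, so $r\pi(x) = 0$ in $L_R(E/(H,S))$. For $x = v \in E^0$, $\pi(v) \in \{0, v, v+v'\}$; in the last two cases right-multiplication by $v$ produces the nonzero $rv$, forcing $\pi(v) = 0$ and hence $v \in H \subseteq I(H,S)$. For $x = v - \sum_{i=1}^n e_i e_i^*$ the case $v \in H$ is trivial (both $v$ and each $e_i e_i^* = v e_i e_i^*$ belong to $I(H,S)$); the case $v \in B_H \setminus S$ will be excluded via the calculation $\pi(x) \cdot v' = v'$ (since each $\pi(e_i)\pi(e_i^*)$ is supported under $v$, not $v'$), which gives the contradiction $rv' \neq 0$; in the remaining case $\pi(x)$ is a sum of pairwise orthogonal idempotents of the form $ee^*$ or $e'(e')^*$, so $r\pi(x) = 0$ forces $\pi(x) = 0$ (each such nonzero idempotent $q$ satisfies $rq \neq 0$), and relation (4) in $L_R(E/(H,S))$ at the finite emitter $v$ identifies $\{e_i : r(e_i) \notin H\}$ with the full set of edges from $v$ to $E^0 \setminus H$; a routine rewrite then places $x$ in $I(H,S)$, using $v^H \in I(H,S)$ when $v \in S$, or relation (4) in $L_R(E)$ when $v$ is a finite emitter in $E$. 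The principal obstacle is the case $v \in B_H \setminus S$: this configuration has no row-finite analogue, and its elimination via the primed vertex $v'$ is exactly what forces the refined Definition \ref{defn3.5}.
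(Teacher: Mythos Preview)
Your overall strategy matches the paper's: both construct the quotient homomorphism $\pi\colon L_R(E)\to L_R(E/(H,S))$ with $I(H,S)\subseteq\ker\pi$ and then test membership of $x$ in $I(H,S)$ by computing $\pi(x)$. Your treatment of the case $v\in B_H\setminus S$ via right multiplication by the primed vertex $v'$ is a clean variant of the paper's route (which instead first proves $rw^H\in I(H,S)\Rightarrow w\in S$ and then shows that any $v\in B_H$ with $rx\in I(H,S)$ is forced into $S$).

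There is, however, a real gap in your ``remaining case'' $v\in(E^0\setminus H)\setminus(B_H\setminus S)$. You assert that $\pi(x)$ is a sum of pairwise orthogonal idempotents of the form $ee^*$ or $e'(e')^*$ and then invoke relation~(4) at ``the finite emitter $v$'' of $E/(H,S)$; but $v$ need not be a finite emitter there. If $v$ is an infinite emitter in $E$ with $v\notin B_H$, then $v$ emits either no edges or infinitely many edges into $E^0\setminus H$, so in $E/(H,S)$ the vertex $v$ is a sink or an infinite emitter, and your expansion of $\pi(v)$ via relation~(4) is unavailable. These are exactly the paper's Cases~II and~III, handled by separate short arguments: if $v$ emits no edges into $E^0\setminus H$ then every $\pi(e_i)=0$, whence $r\pi(x)=rv\neq 0$, a contradiction; if $v$ emits infinitely many, choose $f\in s^{-1}(v)\setminus\{e_1,\ldots,e_n\}$ with $r(f)\notin H$ and note $f^*(rx)f=r\,r(f)\in I(H,S)$, forcing $r(f)\in H$, again a contradiction. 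Once these two subcases are excluded, $v$ is genuinely a finite emitter in $E/(H,S)$ and the remainder of your sketch (either $v\in S$ giving $v-\sum_{r(e_i)\notin H}e_ie_i^*=v^H$, or $v$ a finite emitter in $E$ so that relation~(4) in $L_R(E)$ applies) goes through.
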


\begin{proof}
We first show that $rv\in I(H,S)$ implies $v\in I(H,S)$ and $rw^H\in I(H,S)$ implies $w\in S$, for $v\in E^0$, $w\in B_H$, and $r\in R\setminus\{0\}$. For this, consider the graph $E/(H,S)$ in Definition \ref{defn3.6} and its associated Leavitt path algebra $L_R(E/(H,S))$. For each $v\in E^0$ and $e\in E^1$ define
$$a_v:=\left\{
        \begin{array}{ll}
          v & \mathrm{if~}v\in(E^0\setminus H)\setminus(B_H\setminus S)  \\
          v+v' & \mathrm{if~} v\in B_H\setminus S \\
          0 & \mathrm{if~} v\in H,
        \end{array}
      \right.
$$
$$b_e:=\left\{
        \begin{array}{ll}
          e & \mathrm{if~}r(e)\in(E^0\setminus H)\setminus(B_H\setminus S)  \\
          e+e' & \mathrm{if~} r(e)\in B_H\setminus S \\
          0 & \mathrm{if~} r(e)\in H,
        \end{array}
      \right.
$$
and
$$b_{e^*}:=\left\{
        \begin{array}{ll}
          e^* & \mathrm{if~}r(e)\in(E^0\setminus H)\setminus(B_H\setminus S)  \\
          e^*+(e')^* & \mathrm{if~} r(e)\in B_H\setminus S \\
          0 & \mathrm{if~} r(e)\in H
        \end{array}
      \right.
$$
in $L_R(E/(H,S))$. It is easy to check that the set $\{a_v,b_e,b_{e^*}\}$ is a Leavitt $E$-family in $L_R(E/(H,S))$. So, the universal property of $L_R(E)$ implies that there is an $R$-algebra homomorphism $\phi :L_R(E)\rightarrow L_R(E/(H,S))$ such that $\phi(v)=a_v$, $\phi(e)=b_e$, and $\phi(e^*)=b_{e^*}$. Also, since $\phi$ vanishes on the generators $\{v:v\in H\}\cup\{v^H:v\in S\}$ of $I(H,S)$, we have $I(H,S)\subseteq \ker \phi$.

Now, if $rv\in I(H,S)$ for some $v\in E^0$ and some $r\in R\setminus\{0\}$, then $r\phi(v)=\phi(rv)=0$. But, by the definition, we have
$$\phi(v)=\left\{
            \begin{array}{ll}
              v & \mathrm{if~}v\in(E^0\setminus H)\setminus(B_H\setminus S) \\
              v+v' & \mathrm{if~} v\in B_H \\
              0 & \mathrm{if~} v\in H,
            \end{array}
          \right.
$$
and since the vertices of $E/(H,S)$ are linearly independent \cite[Proposition 4.9]{tom1}, we get that $\phi(v)=0$ and $v\in H$. Similarly, if $rv^H\in I$ for some $v\in B_H$ and some $r\in R\setminus\{0\}$, then $r\phi(v^H)=\phi(rv^H)=0$. Hence, by using the fact
$$\phi(v^H)=\left\{
    \begin{array}{ll}
      0 & \mathrm{if~} v\in S \\
      v' & \mathrm{if~} v\in B_H\setminus S
    \end{array}
  \right.
$$
and applying \cite[Proposition 3.4]{tom1}, we conclude that $v\in S$. Therefore, the claim holds.

Now we show that $I(H,S)$ is a basic ideal. Assume that $rx\in I(H,S)$ for $r\in R\setminus\{0\}$ and $x=v-\sum_{i=1}^ne_ie_i^*$ with $s(e_i)=v$. We show that $x\in I(H,S)$. If $v\in H$, then $v,e_ie_i^*\in I(H,S)$ for all $1 \leq i\leq n$ and hence $x\in I$.

If $v\in E^0\setminus H$, by rearranging terms, we may write
$$x=v-\sum_{i=1}^me_ie_i^*-\sum_{i=m+1}^n e_ie_i^*$$
where $r(e_i)\notin H$ for $1\leq i\leq m$ and $r(e_i)\in H$ for $m+1\leq i\leq n$. Note that if $r(e)\in H$ for an edge $e$, then $ee^*=e r(e)e^*\in I(H,S)$. So $\sum_{i=m+1}^n e_ie_i^*\in I(H,S)$ and for showing $x\in I(H,S)$ it suffices to show $v-\sum_{i=1}^me_ie_i^*\in I(H,S)$. To this end, we consider four possibilities for $v$. Also, note that
$$rv-r\sum_{i=1}^m e_ie_i^*=rx+r\sum_{i=m+1}^ne_ie_i^*\in I(H,S).$$

$\underline{\emph{Case I}}$ : $v\in B_H$.\\
Then
$$rv^H=v^H\left(rv-r\sum_{i=1}^m e_ie_i^*\right)\in I(H,S)$$
and so the first part of proof gives that $v\in S$ and $v^H\in I(H,S)$. We claim that $v-\sum_{i=1}^me_ie_i^*=v^H$. To obtain a contradiction we assume that there exists an edge $e$ with $s(e)=v$, $r(e)\in E^0\setminus H$, and $e\neq e_i$ for $1\leq i\leq m$. Then
$$rr(e)=e^*\left(rv-r\sum_{i=1}^m e_ie_i^*\right)e\in I(H,S)$$
and so $r(e)\in H$, contradicting the hypothesis. Therefore, $v-\sum_{i=1}^m e_ie_i^*=v^H\in I(H,S)$.

$\underline{\emph{Case II}}$ : $v$ emits no edges  into $E^0\setminus H$.\\
Then $m=0$ and $rv=rx+r\sum_{i=1}^ne_i e_i^*\in I(H,S)$. So by the first part of proof, we have $v\in H\subseteq I(H,S)$.

$\underline{\emph{Case III}}$ : $v$ emits infinitely many edges into $E^0\setminus H$.\\
Then we may choose an edge $e$ such that $s(e)=v$, $r(e)\in E^0\setminus H$, and $e\neq e_i$ for all $1\leq i\leq n$. Hence
$$rr(e)=e^*\left(rv-r\sum_{i=1}^m e_ie_i^*\right)e\in I(H,S)$$
and so $r(e)\in H$, a contradiction. Therefore, this case does not happen for $v$.

$\underline{\emph{Case IV}}$ : $v$ emits finitely many edges into $H$ and emits finitely many edges into $E^0\setminus H$.\\
Then $v$ is a finite emitter, and by the relation (4) in Definition \ref{defn1.3}, we may write $v=\sum_{i=1}^ke_ie_i^*$, where $s^{-1}(v)=\{e_1,\ldots,e_k\}$. For any index $j\geq m+1$, we have
$$rr(e_j)=e_j^*\left(rv-r\sum_{i=1}^m e_ie_i^*\right)e_j\in I(H,S),$$
and hence, $r(e_j)\in H$ and $e_je_j^*\in I(H,S)$. Therefore,
$$v-\sum_{i=1}^m e_ie_i^*=\sum_{i=1}^ke_ie_i^*-\sum_{i=1}^me_ie_i^*=\sum_{i=m+1}^ke_ie_i^*\in I(H,S).$$
This completes the proof.
\end{proof}

Now we are in the position to prove the main result of this section. For a hereditary subset $X$ of $E^0$, we denote $I(X)$ the ideal of $\lr$ generated by $X$

\begin{theorem}\label{thm3.10}
Let $E$ be a graph and let $R$ be a unital commutative ring. Then
\begin{enumerate}[$(1)$]
\item If $X$ is a hereditary subset of $E^0$, then the ideal $I(X)$ and the Leavitt path algebra $L_R(E_X)$ are Morita equivalent as rings, where $E_X:=(X,s^{-1}(X),r,s)$.
\item If $(H,S)$ is an admissible pair in $E$, then the ideal $I(H,S)$ and the Leavitt path algebra $L_R(E_{(H,S)})$ are Morita equivalent as rings, where $E_{(H,S)}$ is the graph defined in Definition \ref{defn3.6}.
\item If $(H,S)$ is an admissible pair $E$, then the quotient algebra $L_R(E)/I(H,S)$ is canonically isomorphic to the Leavitt path algebra $L_R(E/(H,S))$.
\item The map $(H,S)\mapsto I(H,S)$ is a one-to-one correspondence between the set of admissible pairs in $E$ and the set of graded basic ideals of $L_R(E)$.
\end{enumerate}
\end{theorem}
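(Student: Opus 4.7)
The plan is to prove the four parts in order, with (4) being the main content. For (1) and (2), my approach is to use universality of $L_R(E_X)$ (resp.\ $L_R(E_{(H,S)})$) to produce an $R$-algebra homomorphism into $L_R(E)$ whose image sits inside $I(X)$ (resp.\ $I(H,S)$), and then identify the image as a full corner. For (1) the generators map by $v\mapsto v$, $e\mapsto e$, $e^*\mapsto e^*$; since $X$ is hereditary, a finite emitter $v\in X$ has $s_E^{-1}(v)\subseteq s^{-1}(X)$, so the CK relation in $E_X$ coincides with the one in $E$ and we have a Leavitt $E_X$-family in $L_R(E)$. For (2) one instead maps $v\mapsto v$ if $v\in H$, $v\mapsto v^H$ if $v\in S$, and edges to themselves; the Leavitt relations reduce to the computations $v^He=e$ and $e^*v^H=e^*$ whenever $s(e)=v\in S$ and $r(e)\in H$, which hold because such $e$ never appears among the summands of $v^H$. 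No CK relation is required at vertices of $S$ since each $v\in B_H$ remains an infinite emitter in $E_{(H,S)}$. Injectivity of both homomorphisms follows from the Graded Uniqueness Theorem of~\cite{tom1}, since $rv\neq 0$ in $L_R(E)$ for every vertex $v$ and nonzero $r$. That the image is a \emph{full} corner of the target ideal (with respect to the local units $t_F=\sum_{v\in F}v$) is then a direct check using Lemma~\ref{lem3.8} together with the saturation/CK relation to express each generator of $I(H,S)$ inside $I(H,S)\,t_F\,I(H,S)$; Morita equivalence follows from the standard full-local-unit criterion.

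For (3), the plan is to build mutually inverse graded homomorphisms. The map $\phi:L_R(E)\to L_R(E/(H,S))$ constructed in the proof of Lemma~\ref{lem3.9} annihilates the generators of $I(H,S)$, hence descends to $\bar\phi:L_R(E)/I(H,S)\to L_R(E/(H,S))$. For the inverse I would define a Leavitt $E/(H,S)$-family in $L_R(E)/I(H,S)$ by $u\mapsto u+I(H,S)$ for $u\in(E^0\setminus H)\setminus(B_H\setminus S)$, $u\mapsto(u-u^H)+I(H,S)$ for $u\in B_H\setminus S$, $u'\mapsto u^H+I(H,S)$ for $u\in B_H\setminus S$, and correspondingly on edges (for instance $e'\mapsto e\,r(e)^H+I(H,S)$ when $r(e)\in B_H\setminus S$). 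The CK relation at $u\in B_H\setminus S$ in $E/(H,S)$ pulls back under these assignments to the identity $u=u^H+\sum_{s(e)=u,\,r(e)\notin H}ee^*$ modulo $I(H,S)$, which holds by the very definition of $u^H$. Universality then yields $\Phi:L_R(E/(H,S))\to L_R(E)/I(H,S)$, and the relations $\Phi\circ\bar\phi=\mathrm{id}$ and $\bar\phi\circ\Phi=\mathrm{id}$ are checked on generators.

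For (4), by Lemmas~\ref{lem3.3} and~\ref{lem3.9} the assignment $(H,S)\mapsto I(H,S)$ takes values in graded basic ideals, and injectivity is Lemma~\ref{lem3.7}. The main obstacle is surjectivity, and this is precisely where the revised definition of \emph{basic} from Definition~\ref{defn3.5} earns its keep. Given a graded basic ideal $J$, put $H:=\{v\in E^0:v\in J\}$ and $S:=\{v\in B_H:v^H\in J\}$; one first checks that $H$ is saturated hereditary (hereditary via $r(e)=e^*ve\in J\cap E^0$ for $v\in H$; saturated via the CK relation $v=\sum_{s(e)=v}ee^*$), so that $(H,S)$ is an admissible pair, and $I(H,S)\subseteq J$ is then immediate. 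For the reverse inclusion I would pass to the isomorphism $\pi:L_R(E)/I(H,S)\xrightarrow{\sim}L_R(E/(H,S))$ from (3) and examine $\pi(J/I(H,S))$. By construction of $H,S$ combined with the basic-ness of $J$, no nonzero $R$-multiple of any vertex of $E/(H,S)$ lies in $\pi(J/I(H,S))$: a multiple $ru\in\pi(J/I(H,S))$ with $u\in E^0\setminus H$ would pull back to $ru\in J$ and, by basic-ness applied to the vertex $u$, to $u\in J\cap E^0=H$, a contradiction; a multiple $ru'$ would pull back to $ru^H\in J$ and, by basic-ness applied to the element $u^H=u-\sum_{i=1}^n e_ie_i^*$ (a finite sum because $u\in B_H$), to $u^H\in J$, forcing $u\in S$, again a contradiction. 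The quotient map $L_R(E/(H,S))\twoheadrightarrow L_R(E/(H,S))/\pi(J/I(H,S))$ is therefore a graded homomorphism not annihilating any nonzero $R$-multiple of a vertex, so the Graded Uniqueness Theorem of~\cite{tom1} forces it to be injective; hence $\pi(J/I(H,S))=0$ and $J\subseteq I(H,S)$, completing the bijection.
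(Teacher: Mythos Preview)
Your proposal is correct and follows essentially the same strategy as the paper for all four parts. The only noteworthy difference is in (3): you construct both $\bar\phi$ and $\Phi$ and verify they are mutually inverse on generators, whereas the paper builds only the map $\Phi:L_R(E/(H,S))\to L_R(E)/I(H,S)$, proves it injective via the Graded Uniqueness Theorem (invoking Lemma~\ref{lem3.9} to know that $I(H,S)$ is basic so that $\Phi(rv)\neq 0$), and checks surjectivity by exhibiting $\pi(v),\pi(e),\pi(e^*)$ as combinations of the $A_v,B_e,B_{e^*}$. Your route is marginally more elementary since it sidesteps the appeal to Lemma~\ref{lem3.9}, at the cost of having to verify the Leavitt relations for both families rather than one.

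One small imprecision in your argument for (4): for a vertex $u\in B_H\setminus S$ of $E/(H,S)$, under the isomorphism of (3) the element $ru$ does \emph{not} pull back to $ru+I(H,S)$ but to $r(u-u^H)+I(H,S)=r\sum_{s(e)=u,\,r(e)\notin H}ee^*+I(H,S)$. So the clause ``would pull back to $ru\in J$'' needs a separate check here: if $r(u-u^H)\in J$, then for any edge $e$ with $s(e)=u$ and $r(e)\notin H$ one has $e^*\bigl(r(u-u^H)\bigr)e=r\cdot r(e)\in J$, hence $r(e)\in H$ by basic-ness, a contradiction. (The paper's own proof of (4) elides the same point.)
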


\begin{proof}

(1) and (2). By using of Lemmas \ref{lem3.7} and \ref{lem3.8}, the proof of these parts are exactly similar to that of \cite[Theorem 7.9(3)]{tom1} and \cite[Theorem 5.7(3)]{tom2}, respectively.

(3). Let $E/(H,S)$ be the quotient graph in Definition \ref{defn3.6}. For each $v,v'\in (E/(H,S))^0$ and $e,e'\in (E/(H,S))^1$ define
$$\hspace{-4mm}\left\{
    \begin{array}{ll}
      a_v:=v & \hspace{4.7mm} \mathrm{if}~v\in(E^0\setminus H)\setminus(B_H\setminus S) \\
      a_v:=v-v^H & \hspace{4.7mm} \mathrm{if}~v\in B_H\setminus S  \\
      a_{v'}:=v^H & \hspace{4.7mm} \mathrm{if}~v\in B_H\setminus S,
    \end{array}
  \right.
$$
$$\left\{
    \begin{array}{ll}
      b_e:=e & \hspace{5.5mm}\mathrm{if}~r(e)\in(E^0\setminus H)\setminus(B_H\setminus S) \\
      b_e:=ea_{r(e)} & \hspace{5.5mm}\mathrm{if}~r(e)\in B_H\setminus S\\
      b_{e'}:=ea_{r(e)'} & \hspace{5.5mm}\mathrm{if}~r(e)\in B_H\setminus S,
    \end{array}
  \right.
$$
and
$$\left\{
    \begin{array}{ll}
      b_{e^*}:=e^* & \hspace{2.5mm}\mathrm{if}~r(e)\in(E^0\setminus H)\setminus(B_H\setminus S)\\
      b_{e^*}:=a_{r(e)}e^* & \hspace{2.5mm}\mathrm{if}~r(e)\in B_H\setminus S\\
      b_{e'^*}:=a_{r(e)'}e^* & \hspace{2.5mm}\mathrm{if}~r(e)\in B_H\setminus S
    \end{array}
  \right.
$$
in $L_R(E)$. If $\pi:L_R(E)\rightarrow L_R(E)/I(H,S)$ is the quotient map, write $A_v:=\pi(a_v)$, $B_e:=\pi(b_e)$, and $B_{e^*}:=\pi(b_{e^*})$ for all $v\in(E/(H,S))^0$ and all $e\in (E/(H,S))^1$. It is straightforward to check that $\{A_v,B_e,B_{e^*}\}$ is a Leavitt $E/(H,S)$-family and thus the universal property gives the homomorphism $\phi:L_R(E/(H,S))\rightarrow L_R(E)/I(H,S)$ with $\phi(v)=A_v$, $\phi(e)=B_e$, and $\phi(e^*)=B_{e^*}$ for $v\in(E/(H,S))^0$ and $e\in(E/(H,S))^1$.

For every $v\in (E/(H,S))^0$, Lemma \ref{lem3.7} yields that $a_v\notin I(H,S)$ and hence we have $\phi(rv)=rA_v\neq 0$ for all $r\in R\setminus\{0\}$ because $I(H,S)$ is a basic ideal (Lemma \ref{lem3.9}). Therefore, the Graded Uniqueness Theorem \cite[Theorem 5.3]{tom1} implies  that $\phi$ is injective.

To see that $\phi$ is surjective, note that $L_R(E)/I(H,S)$ is generated by $\{\pi(v), \pi(e), \pi(e^*):v,r(e)\in E^0\setminus H\}$ and we can recover these elements from $\{A_v,B_e,B_{e^*}\}$:
$$\pi(v)=\left\{
    \begin{array}{ll}
      A_v & \mathrm{if}~v\in(E^0\setminus H)\setminus(B_H\setminus S) \\
      A_v+A_{v'} & \mathrm{if}~v\in B_H\setminus S,
    \end{array}
  \right.
$$
$$
\pi(e)=\left\{
         \begin{array}{ll}
           B_e & \mathrm{if}~r(e)\in(E^0\setminus H)\setminus(B_H\setminus S)\\
           B_e+B_{e'} & \mathrm{if}~r(e)\in(B_H\setminus S),
         \end{array}
       \right.
$$
and
$$\pi(e^*)=\left\{
             \begin{array}{ll}
               B_{e^*} & \mathrm{if}~r(e)\in (E^0\setminus H)\setminus (B_H\setminus S) \\
               B_{e^*}+B_{e'^*} & \mathrm{if}~r(e)\in B_H\setminus S.
             \end{array}
           \right.
$$
Hence $\phi$ is surjective, and therefore, $L_R(E)/I(H,S)\cong L_R(E/(H,S))$.

(4). The injectivity of the map follows immediately from Lemma \ref{lem3.7}. Indeed, if $I(H_1,S_1)=I(H_2,S_2)$, then by Lemma \ref{lem3.7} we have
$$H_1=\{v\in E^0:v\in I(H_1,S_1)=I(H_2,S_2)\}=H_2$$
and
$$S_1=\{v\in B_{H_1}=B_{H_2}:v^H\in I(H_1,S_1)=I(H_2,S_2)\}=S_2.$$

To see that the map is surjective, let $I$ be a graded basic ideal of $L_R(E)$ and set $H:=\{v\in E^0:v\in I\}$ and $S:=\{v\in B_H:v^H\in I\}$. Then $(H,S)$ is an admissible pair in $E$ and $I(H,S)\subseteq I$. For the reverse inclusion, note that $I$ and $I(H,S)$ contain the same $v$'s and $w^H$'s by Lemma \ref{lem3.7}. Hence, since $I$ is a basic ideal, it follows that $rv\notin I$ and $sw^H\notin I$  for all $v\in E^0\setminus H$, $w\in B_H\setminus S$, and $r,s\in R\setminus\{0\}$. Therefore, if $\pi: L_R(E)/I(H,S) \cong L_R(E/(H,S))\rightarrow L_R(E)/I$ is the quotient map, we have $\pi(rv)\neq0$, for every $v\in(E/(H,S))^0$ and $r\in R\setminus\{0\}$. Also $\pi$ is a graded homomorphism because $I$ is a graded ideal. Now the Graded Uniqueness Theorem \cite[Theorem 5.3]{tom1} implies that $\pi$ is injective and hence $I=I(H,S)$.
\end{proof}

\begin{remark} Observe that if $R=K$ is a field, then every ideal of a Leavitt path algebra $L_K(E)$ is basic. Hence, Theorem \ref{thm3.10} is the generalization of \cite[Theorem 5.7]{tom2} for the case that $R$ is a unital commutative ring. Also, this theorem is an extension of \cite[Theorem 7.9]{tom1} which was proved for row-finite graphs. However, our method of proof is similar to those used in \cite{tom2,tom2} which came from \cite{bat1} for the graph $C^*$-algebras.
\end{remark}

Theorem \ref{thm3.10}(3) shows that every quotient of a Leavitt path algebra by a graded basic ideal is in the class of Leavitt path algebras. This result is an important tool to study ideal structure of Leavitt path algebras. Our next result, Proposition \ref{prop3.13}, is similar to \cite[Theorem 6.1]{rui} which shows that every graded basic ideal of a Leavitt path algebra is also a Leavitt path algebra. Before that we state the following definition from \cite{rui}

\begin{definition}[\cite{rui}, Definition 4.1]\label{defn3.12}
Suppose that $E$ is a graph and $(H,S)$ is an admissible pair in $E$. Let\\
$$F_1(H,S):=\left\{\alpha=e_1\ldots e_n\in \mathrm{Path}(E):s(e_n)\notin H\cup S \mathrm{~and~} r(e_n)\in H\right\}$$
and
$$F_2(H,S):=\left\{\alpha\in \mathrm{Path}(E):|\alpha|>1 \mathrm{~and~} r(\alpha)\in S\right\}.$$
Let $\overline{F_i}(H,S):=\{\overline{\alpha}:\alpha\in F_E(H,S)\}$ be another copy of $F_i(H,S)$ for $i=1,2$. We define the graph $_HE_S$ as follows:
\begin{align*}
_HE^0_S&:=H\cup S\cup F_1(H,S)\cup F_2(H,S),\\
_HE^1_S&:=s^{-1}(H)\cup \{e\in E^1:s(e)\in S~\mathrm{and}~r(e)\in H\}\cup \overline{F_1}(H,S)\cup \overline{F_2}(H,S)
\end{align*}
with $s(\overline{\alpha})=\alpha$ and $r(\overline{\alpha})=r(\alpha)$ for $\overline{\alpha}\in \overline{F_1}(H,S)\cup \overline{F_2}(H,S)$, and the source and range as in $E$ for the other edges of $_HE^1_S$.
\end{definition}

\begin{proposition}[See Theorem 6.1 of \cite{rui}]\label{prop3.13}
Let $E$ be a graph and let $(H,S)$ be an admissible pair in $E$. Then $I(H,S)\cong L_R(_HE_S)$ as rings.
\end{proposition}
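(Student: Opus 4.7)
The plan is to construct a Leavitt $_HE_S$-family inside $I(H,S)\subseteq L_R(E)$, invoke the universal property of $L_R(_HE_S)$ to obtain a ring homomorphism $\phi\colon L_R(_HE_S)\to I(H,S)$, and then show that $\phi$ is bijective. Surjectivity will follow from the spanning description of $I(H,S)$ in Lemma~\ref{lem3.3}; injectivity will come from the Graded Uniqueness Theorem applied under a suitable non-standard $\mathbb{Z}$-grading of $L_R(_HE_S)$.

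For the family I would set, inside $I(H,S)$, $P_v:=v$ for $v\in H$ and $P_v:=v^H$ for $v\in S$, together with $P_\alpha:=\alpha\alpha^*$ for $\alpha\in F_1(H,S)$ and $P_\alpha:=\alpha\,r(\alpha)^H\alpha^*$ for $\alpha\in F_2(H,S)$; for edges, I would take $T_e:=e$ for the original edges of $E$ retained in $_HE^1_S$, $T_{\overline{\alpha}}:=\alpha$ for $\overline{\alpha}\in\overline{F_1}(H,S)$, and $T_{\overline{\alpha}}:=\alpha\,r(\alpha)^H$ for $\overline{\alpha}\in\overline{F_2}(H,S)$, with the starred versions defined as the evident adjoints. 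Checking that this is a Leavitt $_HE_S$-family is the bulk of the work and rests on two ingredients: the idempotent identities $(v^H)^2=v^H$, $v^H\,v=v\,v^H=v^H$, and $v^H e=0$ whenever $s(e)=v$ and $r(e)\notin H$; and the source--range algebra of paths, which together with hereditariness of $H$ and the defining conditions on $F_1(H,S)$ and $F_2(H,S)$ forces $P_u P_w=0$ for distinct $u,w\in{}_HE^0_S$ (the key observation being that if $\alpha^*\beta\ne 0$ for distinct $\alpha,\beta\in F_1(H,S)\cup F_2(H,S)$, then one is a strict prefix of the other and either hereditariness violates a defining condition on the longer path or the $v^H$ factor annihilates the initial edge of the excess). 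The Cuntz--Krieger relation~(4) is verified case by case on $_HE^0_S$: for $v\in H$ it reduces to relation~(4) in $L_R(E)$; at each vertex-label $\alpha\in F_1(H,S)\cup F_2(H,S)$ the graph $_HE_S$ has a unique outgoing edge $\overline{\alpha}$, and $P_\alpha=T_{\overline{\alpha}}T_{\overline{\alpha}^*}$ is a direct computation; crucially, every $v\in S$ is an \emph{infinite} emitter in $_HE_S$ (since $v\in B_H$ emits infinitely many edges into $H$), so relation~(4) imposes no requirement on $P_v$.

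Once $\phi$ is obtained from the universal property, surjectivity follows by rewriting each spanning monomial $\alpha\beta^*$ (respectively $\alpha v^H\beta^*$) of Lemma~\ref{lem3.3} as a product of the chosen generators $T_e,T_{\overline{\alpha}},P_v,P_\alpha$. The main obstacle is injectivity. I would equip $L_R(_HE_S)$ with the $\mathbb{Z}$-grading in which $\deg\overline{\alpha}:=|\alpha|$ for every $\overline{\alpha}\in\overline{F_1}(H,S)\cup\overline{F_2}(H,S)$ and every other edge has degree~$1$; all Leavitt relations of $_HE_S$ remain homogeneous under this degree assignment, so it defines a legitimate $\mathbb{Z}$-grading (with vertices still in degree zero), and $\phi$ becomes a graded ring homomorphism into $L_R(E)$ with its natural grading. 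For every vertex $w$ of $_HE_S$ and every nonzero $r\in R$ one checks $\phi(rw)\ne 0$: when $w\in H\cup S$ this uses only that $rv\ne 0$ and $rv^H\ne 0$ in $L_R(E)$, and when $w=\alpha$ is a vertex-label the identity $\alpha^*\phi(r\alpha)\alpha=r\,r(\alpha)$ (respectively $r\,r(\alpha)^H$) reduces to the previous case. A Graded Uniqueness Theorem of the type in \cite[Theorem~5.3]{tom1}, applied under the non-standard grading of $L_R(_HE_S)$, then forces $\phi$ to be injective. Justifying this application of Graded Uniqueness---the standard proof needs only that the source be $\mathbb{Z}$-graded with vertices in degree zero---is the subtle point I expect to demand the most care.
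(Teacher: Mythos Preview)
Your construction of the Leavitt ${}_HE_S$-family inside $I(H,S)$ and your outline for surjectivity match the paper's proof exactly (the paper defers the routine verifications to \cite{rui}). The divergence is in the injectivity argument, and here the paper makes the opposite choice from yours: rather than regrading the \emph{source} $L_R({}_HE_S)$ so that $\deg\overline{\alpha}=|\alpha|$, the paper regrades the \emph{target} $I(H,S)$ by declaring that an edge $e$ of $E$ carries degree $1$ precisely when it enters $H$ (with a compatible splitting at $S$: $d(e\,r(e)^H)=1$ and $d(e(r(e)-r(e)^H))=0$), and degree $0$ otherwise. With this grading on $I(H,S)$ one computes $d(\phi(\overline{\alpha}))=1$ for every $\overline{\alpha}\in\overline{F_1}(H,S)\cup\overline{F_2}(H,S)$, so $\phi$ is graded with respect to the \emph{standard} grading of $L_R({}_HE_S)$, and \cite[Theorem~5.3]{tom1} applies verbatim.

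This buys the paper precisely the point you flag as delicate. Your regrading of $L_R({}_HE_S)$ is a legitimate $\mathbb{Z}$-grading (all Leavitt relations are homogeneous for any edge-weighted degree function), but the Graded Uniqueness Theorem of \cite{tom1} is proved only for the canonical grading: its engine is \cite[Lemma~5.2]{tom1}, which compresses an arbitrary nonzero element of the \emph{standard} zero-component to some $rv$, and that lemma's proof uses the specific form $\sum r_i\alpha_i\beta_i^*$ with $|\alpha_i|=|\beta_i|$. Under your grading the zero-component is different, so you would have to either reprove an analogue of that lemma for edge-weighted gradings or cite a stronger uniqueness theorem. The paper's device of moving the nonstandard grading to the codomain sidesteps this entirely: the Graded Uniqueness Theorem places no structural hypothesis on the target beyond it being $\mathbb{Z}$-graded, so one is free to choose any grading there.
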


\begin{proof}
We define a map $\phi:L_R(_HE_S)\rightarrow I(H,S)$ by the following rules:
\begin{align*}
\phi(v)&=\left\{
          \begin{array}{ll}
            v & \mathrm{~if~}v\in H \\
            v^H & \mathrm{~if~} v\in S
          \end{array}
        \right.\\
\phi(\alpha)&=\left\{
                \begin{array}{ll}
                  \alpha\alpha^* & \mathrm{~if~} \alpha\in F_1(H,S)\\
                  \alpha r(\alpha)^H\alpha^* & \mathrm{~if~} \alpha\in F_2(H,S)
                \end{array}
              \right.\\
\phi(e)&=e ~\mathrm{if}~ e\in s^{-1}(H)\cup \{e\in E^1:s(e)\in S~\mathrm{and}~r(e)\in H\},\\
\phi(\overline{\alpha})&=\left\{
                            \begin{array}{ll}
                              \alpha & ~\mathrm{~if~} \overline{\alpha}\in \overline{F_1}(H,S)\\
                              \alpha r(\alpha)^H & ~~\mathrm{~if~}\overline{\alpha}\in \overline{F_2}(H,S),
                            \end{array}
                          \right.
\end{align*}
and for each $f\in~\hspace{-2mm} _HE^1_S$ we set $\phi(f^*)=\phi(f)^*$. Similar to the proof of \cite[Theorem 5.1]{rui} one may check that the set $\{\phi(v),\phi(e),\phi(e^*):v\in ~\hspace{-2mm}_HE^0_S,e\in ~\hspace{-2mm}_HE^1_S\}$ is a Leavitt $_HE_S$-family in $I(H,S)$ and hence the universal property implies that there exists such homomorphism. We will show that $\phi$ is an isomorphism. The proof of surjectivity of $\phi$ is identical to the last part of the proof of \cite[Theorem 5.1]{rui}, so we omit it.

It remains to show that $\phi$ is injective. Define a degree map $d$ by
\begin{align*}
&\left\{
  \begin{array}{ll}
           d(v):=0 & \mathrm{~if~} v\in H,\\
           d(v^H):=0 & \mathrm{~if~} v\in S,
\end{array}
\right.\\
&\left\{
  \begin{array}{ll}
           d(e):=1 & \mathrm{~if~} r(e)\in H \\
           d(er(e)^H):=1 & \mathrm{~if~}r(e)\in S\\
           d(e(r(e)-r(e)^H)):=0 & \mathrm{~if~}r(e)\in S\\
           d(e):=0 & \mathrm{~if~} r(e)\notin H\cup S,
\end{array}
\right.\\
\hspace{-20mm}\mathrm{and}\\
&\left\{
  \begin{array}{ll}
           d(e^*):=-1 & \mathrm{~if~} r(e)\in H \\
           d(r(e)^He^*):=-1 & \mathrm{~if~}r(e)\in S\\
           d((r(e)-r(e)^H)e^*):=0 & \mathrm{~if~}r(e)\in S\\
           d(e^*):=0 & \mathrm{~if~} r(e)\notin H\cup S.
\end{array}
\right.\\
\end{align*}
By the span in Lemma \ref{lem3.3}, $d$ induces a $\Z$-grading on $I(H,S)$, and hence $I(H,S)$ is a graded algebra. Note that if $s(e)=v\in S$ and $r(e)\notin H$ then $ee^*e=(v-v^H)e$. Thus if $\overline{\alpha}=\overline{e_1\ldots e_n}\in \overline{F_2}(H,S)$, we have

\begin{align*}
d\left(\phi(\overline{\alpha})\right)&=d\left(e_1e_2\ldots e_nr(e_n)^H\right)\\
&=d\left(e_1(e_2e_2^*)e_2\ldots(e_ne_n^*)e_nr(e_n)^H\right)\\
&=d(e_1(e_2e_2^*))+\ldots+d(e_{n-1}(e_ne_n^*))+d(e_n r(e_n)^H)\\
&=0+\ldots+0+1=1.
\end{align*}
Also, if $\overline{\alpha}=\overline{e_1\ldots e_n}\in \overline{F_1}(H,S)$, then
\begin{align*}
d(\phi(\overline{\alpha}))&=d(e_1\ldots e_n)\\
&=d(e_1)+\ldots+d(e_{n-1})+d(e_n)\\
&=0+\ldots+0+1=1.
\end{align*}
Hence for every $x\in ~\hspace{-2mm}_HE^0_S$ and $f\in ~\hspace{-2mm}_HE^1_S$, $\phi(x)$, $\phi(f)$, and $\phi(f^*)$ are homogeneous of degree $0$, $1$, and $-1$, respectively. This implies that $\phi$ is a graded homomorphism. Furthermore, we have $\phi(rx)\neq 0$ for all $x\in ~\hspace{-2mm} _HE^0_S$ and $r\in R\setminus\{0\}$ because all elements $r\alpha\alpha^*$ and $r\alpha r(\alpha)^H\alpha^*$ are nonzero. Therefore, $\phi$ satisfies the conditions of the Graded Uniqueness Theorem, and hence, $\phi$ is injective.
\end{proof}

\begin{corollary}\label{cor3.14}
Let $E$ be a graph and let $R$ be a unital commutative ring. If $(H,S)$ is an admissible pair in $E$, then $I(H,S)$ is a ring with a set of local units.
\end{corollary}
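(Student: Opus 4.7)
The plan is to invoke Proposition \ref{prop3.13} to identify $I(H,S)$ with the Leavitt path algebra $L_R({}_HE_S)$ as rings, and then apply the observation recorded in Section 2 that any Leavitt path algebra associated to a countable graph admits a set of local units formed by finite sums of vertices. Since possession of a set of local units is a ring-theoretic property preserved under ring isomorphism, this will yield the corollary at once.

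First, I would verify that the graph ${}_HE_S$ is countable. From Definition \ref{defn3.12}, the vertex set ${}_HE^0_S = H \cup S \cup F_1(H,S) \cup F_2(H,S)$ sits inside $E^0 \cup \mathrm{Path}(E)$, and since $E$ is countable, $\mathrm{Path}(E) = \bigcup_{n=0}^\infty E^n$ is a countable union of countable sets and hence countable; the same reasoning handles ${}_HE^1_S$. Enumerating ${}_HE^0_S = \{w_1, w_2, \ldots\}$ and setting $t_n := \sum_{i=1}^n w_i$, the discussion following \eqref{2.1} then furnishes $\{t_n\}_{n \in \mathbb{N}}$ as a set of local units for $L_R({}_HE_S)$.

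To finish, let $\phi : L_R({}_HE_S) \to I(H,S)$ denote the isomorphism of Proposition \ref{prop3.13}. The set $\{\phi(t_n)\}_{n \in \mathbb{N}} \subseteq I(H,S)$ consists of commuting idempotents, and for any $x \in I(H,S)$, choosing $n$ large enough that $t_n \phi^{-1}(x) = \phi^{-1}(x) t_n = \phi^{-1}(x)$ gives $\phi(t_n)\, x = x\, \phi(t_n) = x$, as required.

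There is essentially no obstacle here, since the substantive work was already done in Proposition \ref{prop3.13}. The one mild subtlety worth flagging is that a naive attempt to build local units from $\{v : v \in H\} \cup \{v^H : v \in S\}$ alone would fail: a typical spanning element $\alpha \beta^* \in I(H,S)$ with $r(\alpha) = r(\beta) \in H$ has sources $s(\alpha), s(\beta)$ that need not lie in $H \cup S$, and it is precisely the auxiliary vertices $F_1(H,S)$ and $F_2(H,S)$ in Definition \ref{defn3.12} that supply the missing idempotents $\alpha\alpha^*$ and $\alpha r(\alpha)^H \alpha^*$ needed to absorb such terms.
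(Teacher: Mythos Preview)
Your proof is correct and follows exactly the approach the paper intends: the corollary is stated immediately after Proposition~\ref{prop3.13} with no proof given, so the implicit argument is precisely to transport the local units of $L_R({}_HE_S)$ across the ring isomorphism. Your verification that ${}_HE_S$ is countable and your remark on why the naive generating set $\{v : v \in H\} \cup \{v^H : v \in S\}$ does not directly furnish local units are welcome elaborations beyond what the paper records.
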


In the next theorem we show that if a graph $E$ satisfying Condition (K), then every basic ideal of $L_R(E)$ is graded. Recall that a graph $E$ is said to satisfy Condition (L) if every closed simple path has an exit, and is said to satisfy Condition (K) if any vertex in $E^0$ is either the base of no closed path or the base of at least two distinct closed simple paths. The following lemmas are well-known in literature.

\begin{lemma}[\cite{tom2}, Proposition 6.12]\label{lem3.15}
A graph $E$ satisfies Condition (K) if and only if for every admissible pair $(H,S)$ in $E$ the quotient graph $E/(H,S)$ satisfies Condition (L).
\end{lemma}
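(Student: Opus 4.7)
The plan is to prove both implications directly, each time tracking how closed simple paths interact with the quotient construction $E/(H,S)$ of Definition~\ref{defn3.6}.

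For the forward implication, assume $E$ satisfies Condition (K) and fix an admissible pair $(H,S)$. First I would observe that the primed vertices $v'$, for $v \in B_H \setminus S$, emit no edges in $E/(H,S)$, since every element of $(E/(H,S))^1$ has its source in $E^0$. Hence any closed simple path $\tilde{\alpha}$ in $E/(H,S)$ uses no primed edges and lifts verbatim to a closed simple path $\alpha = e_1 \cdots e_n$ in $E$ whose vertices and ranges all lie in $E^0 \setminus H$. Applying Condition (K) at the base vertex $v = s(\alpha)$ yields a second closed simple path $\beta \neq \alpha$ based at $v$; since $v \notin H$, $\beta$ returns to $v$, and $H$ is hereditary, every vertex of $\beta$ lies outside $H$, so $\beta$ also descends to a closed path in $E/(H,S)$. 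The first position at which $\alpha$ and $\beta$ diverge then supplies an edge of $E/(H,S)$ that is an exit for $\tilde{\alpha}$.

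For the converse I would argue contrapositively: assume $E$ fails Condition (K) and produce an admissible pair whose quotient violates Condition (L). Pick a vertex $v$ that is the base of some closed path but of exactly one closed simple path $\alpha = e_1 \cdots e_n$. Set $T := \{w \in E^0 : \text{no path of } E \text{ from } w \text{ ends at } v\}$; then $T$ is hereditary, because extending an edge from $w \in T$ to a range outside $T$ would manufacture a path from $w$ to $v$. The key bookkeeping step is an induction over the stages of saturation showing that no vertex of $\alpha$ is ever adjoined to $\overline{T}$: at every stage each vertex $s(e_i)$ emits $e_i$ whose range is another vertex of $\alpha$ (or is $v$ itself), so the saturation criterion $r(s^{-1}(s(e_i))) \subseteq T_n$ fails. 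Setting $H := \overline{T}$ and $S := B_H$ produces an admissible pair in which the quotient $E/(H,B_H)$ has no primed vertices and $\alpha$ persists as a closed simple path based at $v$.

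Finally, I would show $\alpha$ has no exit in $E/(H,B_H)$. A hypothetical exit $f$ at some $s(e_i)$ would be an edge of $E$ with $f \neq e_i$ and $r(f) \in E^0 \setminus H \subseteq E^0 \setminus T$, so some path $\mu$ from $r(f)$ to $v$ would exist. Concatenating, $e_1 \cdots e_{i-1} f \mu$ is a closed path at $v$, and trimming to its first return to $v$ yields a closed simple path agreeing with $\alpha$ on $e_1, \ldots, e_{i-1}$ but using $f \neq e_i$ at position $i$ (no earlier return is possible because $\alpha$ is itself simple, so $r(e_k) \neq v$ for $k<n$). This produces a second closed simple path at $v$, contradicting the uniqueness of $\alpha$. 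The main obstacle is the saturation bookkeeping in the converse, namely verifying that the vertices of $\alpha$ survive in $E^0 \setminus \overline{T}$ and that the trimmed closed simple path produced by the hypothetical exit is genuinely new rather than a re-tracing of $\alpha$.
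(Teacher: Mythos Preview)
The paper does not supply a proof of Lemma~\ref{lem3.15}; it is quoted as \cite[Proposition~6.12]{tom2}. Your argument is correct and is essentially the standard direct proof. The forward direction is clean once one notes (as you do) that primed vertices in $E/(H,S)$ are sinks, so closed simple paths in the quotient lift to $E$; the only point worth making explicit is that two distinct closed simple paths $\alpha,\beta$ based at the same vertex must genuinely diverge at some index, since the closed-simple-path condition $s(e_i)\neq s(e_1)$ for $i>1$ rules out either being a proper prefix of the other. In the converse your choice $T=\{w:w\not\geq v\}$ with $H=\overline{T}$ and $S=B_H$ is exactly the right admissible pair, and the saturation bookkeeping and the first-return trimming both go through as you describe. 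There is no gap.
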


\begin{lemma}[\cite{tom1}, Lemma 7.14]
If $E$ is a simple closed path of length $n$, then $L_R(E)\cong M_n(R[x,x^{-1}])$.
\end{lemma}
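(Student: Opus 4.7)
The plan is to construct a graded isomorphism explicitly and then invoke the Graded Uniqueness Theorem. Label the vertices of the simple closed path as $v_1,\ldots,v_n$ and the edges as $e_1,\ldots,e_n$ with $s(e_i)=v_i$, $r(e_i)=v_{i+1}$ for $i<n$, and $s(e_n)=v_n$, $r(e_n)=v_1$. Inside the matrix ring $M_n(R[x,x^{-1}])$, write $E_{ij}$ for the standard matrix units. I will define
\[
a_{v_i}:=E_{ii},\qquad b_{e_i}:=E_{i,i+1}\;(i<n),\qquad b_{e_n}:=xE_{n,1},
\]
and $b_{e_i^*}:=E_{i+1,i}$ for $i<n$, $b_{e_n^*}:=x^{-1}E_{1,n}$. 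The first step is to verify directly that these elements satisfy the four Leavitt $E$-family relations of Definition \ref{defn1.4}; this reduces to routine matrix-unit calculations, with the only nontrivial check being relation (4), where each vertex emits exactly one edge and $E_{i,i+1}E_{i+1,i}=E_{ii}$, $(xE_{n,1})(x^{-1}E_{1,n})=E_{nn}$. The universal property of $L_R(E)$ then yields an $R$-algebra homomorphism $\phi:L_R(E)\to M_n(R[x,x^{-1}])$.

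Next, I will equip $M_n(R[x,x^{-1}])$ with the $\mathbb{Z}$-grading that assigns to $x^kE_{ij}$ the degree $j-i+kn$; this is a legitimate grading since $(x^kE_{ij})(x^\ell E_{pq})$ is nonzero only if $j=p$, and in that case equals $x^{k+\ell}E_{iq}$, whose degree is $q-i+(k+\ell)n=(j-i+kn)+(q-p+\ell n)$. With this grading, $\phi$ sends every generator into the correct homogeneous component (vertices to degree $0$, real edges to degree $1$, ghost edges to degree $-1$), so $\phi$ is a graded homomorphism. Moreover, for every $r\in R\setminus\{0\}$ and every vertex $v_i$, we have $\phi(rv_i)=rE_{ii}\neq 0$. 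Thus the Graded Uniqueness Theorem \cite[Theorem 5.3]{tom1} applies and forces $\phi$ to be injective.

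For surjectivity, I will show that the image contains every $x^kE_{ij}$. The pure matrix units $E_{ij}$ are obtained as products of the $b_{e_i}$'s and $b_{e_i^*}$'s for $i<n$ (concatenating forward along the cycle gives $E_{ij}$ with $j>i$, and backward gives $E_{ji}$). Then $xE_{n,n}=\phi(e_n)\cdot\phi(e_1e_2\cdots e_{n-1})=xE_{n,1}E_{1,n}$ lies in the image, and multiplying on the left and right by appropriate matrix units already shown to be in the image produces every $xE_{ij}$. Similarly $x^{-1}E_{n,n}=\phi(e_{n-1}^*\cdots e_1^*)\phi(e_n^*)$ is in the image, whence every $x^{-1}E_{ij}$ is, and induction on $|k|$ finishes the argument.

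The only step with any genuine subtlety is choosing the grading on $M_n(R[x,x^{-1}])$ so that $\phi$ becomes a graded map; once that is in place the Graded Uniqueness Theorem handles injectivity without having to compute a basis of $L_R(E)$ by hand, and surjectivity is a straightforward generation argument. The rest is bookkeeping with matrix units.
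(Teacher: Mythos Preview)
Your argument is correct. Note, however, that the paper does not supply its own proof of this lemma: it is quoted verbatim as \cite[Lemma 7.14]{tom1} and used as a black box. So there is nothing in the present paper to compare against.

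Your proof is essentially the standard one. The Leavitt $E$-family verification is routine as you say, the grading $\deg(x^kE_{ij})=j-i+kn$ is exactly what is needed to make $\phi$ graded, and invoking the Graded Uniqueness Theorem for injectivity is clean and avoids any by-hand linear-independence computation. The surjectivity argument is also fine: once you have all the degree-zero matrix units $E_{ij}$ (from products of $b_{e_1},\ldots,b_{e_{n-1}}$ and their stars) together with $xE_{n,n}$ and $x^{-1}E_{n,n}$, conjugation and induction on $|k|$ give the full matrix ring. One cosmetic remark: when you write ``multiplying on the left and right by appropriate matrix units'' it would be slightly clearer to display the identity $x^kE_{ij}=E_{i,n}(x^kE_{n,n})E_{n,j}$ explicitly, but this is presentation, not a gap.
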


The following lemma has been proved in row-finite case in \cite[Proposition 7.16]{tom1}. By using Lemma \ref{lem3.8} and Corollary \ref{cor3.14}, in the non-row-finite case the proof is identical.

\begin{lemma}\label{lem3.17}
Let $R$ be a commutative ring, and let $E$ be a graph containing a closed simple path without exits. Then $L_R(E)$ contains an ideal that is basic but not graded.
\end{lemma}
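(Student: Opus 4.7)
The plan is to exhibit a concrete basic non-graded ideal of $L_R(E)$, following the row-finite argument of \cite[Proposition 7.16]{tom1} and substituting Lemma \ref{lem3.8} and Corollary \ref{cor3.14} for the uses of row-finiteness. Let $c = e_1\cdots e_n$ be a closed simple path of $E$ without exits. Writing $v_i := s(e_i)$, $V_c := \{v_1,\ldots,v_n\}$, and $u := v_1 + \cdots + v_n$, the no-exit assumption forces each $v_i$ to be a finite emitter with $s^{-1}(v_i) = \{e_i\}$ and $r(e_i) \in V_c$. The candidate ideal is
$$I := L_R(E)(c - v_1)L_R(E),$$
the two-sided ideal generated by the non-homogeneous element $c - v_1$.

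The first step is to identify a copy of $M_n(R[x, x^{-1}])$ inside $L_R(E)$ as the corner $uL_R(E)u$. Since $c$ has no exits, any path of $E$ with source in $V_c$ stays in $V_c$, so $uL_R(E)u$ coincides with the subalgebra of $L_R(E)$ generated by $\{v_i, e_i, e_i^* : 1\leq i\leq n\}$. By the universal property this subalgebra is a quotient of $L_R(C_n)$, where $C_n$ is the cycle graph of length $n$, and the Graded Uniqueness Theorem forces the quotient map to be an isomorphism. Combined with \cite[Lemma 7.14]{tom1} this yields $uL_R(E)u \cong M_n(R[x, x^{-1}])$ under an identification sending $v_1 \leftrightarrow E_{11}$ and $c \leftrightarrow xE_{11}$. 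Because $c - v_1 = u(c - v_1)u$, the compression $uIu$ is exactly the ideal of $uL_R(E)u$ generated by $c - v_1$, which corresponds to $M_n(\langle x - 1\rangle)$.

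Non-gradedness of $I$ is then immediate: if $I$ were graded, it would contain the homogeneous components $c$ (of degree $n$) and $-v_1$ (of degree $0$) of $c - v_1$ separately, whence $v_1 \in I$. But then $v_1 = uv_1u \in uIu$ would map to $E_{11}\in M_n(\langle x-1\rangle)$, contradicting $1\notin\langle x-1\rangle$ (witnessed by the augmentation $x\mapsto 1$, which restricts to the identity on $R$).

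For basicness, let $r\in R\setminus\{0\}$ and let $x$ be either a vertex of $E$ or an element of the form $v - \sum_{i=1}^m f_i f_i^*$ with $s(f_i)=v$; assume $rx\in I$ and aim to deduce $x\in I$. When $x$ is supported inside $V_c$, compression by $u$ gives $r(uxu)\in uIu\cong M_n(\langle x-1\rangle)$, and the injectivity of the augmentation on $R$ forces either $r = 0$ (impossible) or $x$ already in $I$. When $x$ is not supported on $V_c$, the row-finite argument of \cite[Proposition 7.16]{tom1} adapts verbatim once Lemma \ref{lem3.8} is used to pass between hereditary generating sets and their saturations, and Corollary \ref{cor3.14} is used to produce local units inside the relevant ideals $I(H,S)$. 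The principal obstacle is exactly this last case: the sum-form elements $v - \sum f_i f_i^*$ with $v \in B_H$ outside $V_c$ do not arise in the row-finite setting (where $B_H = \emptyset$) and require the local-units machinery of Corollary \ref{cor3.14} to treat, which is why the row-finite proof cannot be quoted verbatim.
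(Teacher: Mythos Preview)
Your proposal is correct and follows exactly the approach the paper indicates: adapt \cite[Proposition~7.16]{tom1} using Lemma~\ref{lem3.8} and Corollary~\ref{cor3.14}, with the paper itself giving no further detail beyond that one sentence. Your explicit identification of the ideal as $L_R(E)(c-v_1)L_R(E)$, the corner computation $uL_R(E)u\cong M_n(R[x,x^{-1}])$, and the non-gradedness argument are all sound; the only minor wobble is in the final paragraph, where the ``obstacle'' you flag for sum-form elements $v-\sum f_jf_j^*$ is in fact handled by the same trick as for vertices (choose any edge $g\in s^{-1}(v)\setminus\{f_j\}$, which exists whenever $v$ is an infinite emitter, and compress by $g^*(\cdot)g$ to reduce to the vertex case), so Corollary~\ref{cor3.14} is not really what resolves it.
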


\begin{theorem}\label{thm3.18}
Let $E$ be a graph and let $R$ be a unital commutative ring. Then $E$ satisfies Condition (K) if and only if every basic ideal of $L_R(E)$ is graded.
\end{theorem}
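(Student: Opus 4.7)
The plan is to prove both directions using the quotient identification $L_R(E)/I(H,S)\cong L_R(E/(H,S))$ from Theorem \ref{thm3.10}(3), together with Lemma \ref{lem3.15} linking Conditions (K) and (L) and the Cuntz--Krieger Uniqueness Theorem for $L_R(E)$ from \cite{tom1}.

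For the forward implication, assume $E$ satisfies Condition (K) and let $I$ be a basic ideal of $L_R(E)$. Set $H:=I\cap E^0$ and $S:=\{v\in B_H:v^H\in I\}$; a routine check shows that $(H,S)$ is an admissible pair and $I(H,S)\subseteq I$. The composition of the isomorphism $\phi:L_R(E/(H,S))\xrightarrow{\sim} L_R(E)/I(H,S)$ with the further quotient gives a graded homomorphism $\psi:L_R(E/(H,S))\to L_R(E)/I$. By Lemma \ref{lem3.15}, $E/(H,S)$ satisfies Condition (L), so the Cuntz--Krieger Uniqueness Theorem reduces showing $I=I(H,S)$ to verifying $\psi(rw)\neq 0$ for every vertex $w$ of $E/(H,S)$ and every nonzero $r\in R$. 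This splits into three cases depending on which kind of vertex $w$ is: an unprimed vertex in $(E^0\setminus H)\setminus(B_H\setminus S)$ (handled by vertex-basicness of $I$), a primed vertex $v'$ with $v\in B_H\setminus S$ (handled by applying basicness to $v^H=v-\sum_{s(e)=v,\,r(e)\notin H}ee^*$, which is a finite sum because $v\in B_H$), and an unprimed vertex $v\in B_H\setminus S$. The third case is the main obstacle: here $\psi(rv)=r(v-v^H)+I=r\sum_{i}e_ie_i^*+I$, and to prove this nonzero one multiplies a putative relation $r\sum_{i}e_ie_i^*\in I$ on the left by $e_j^*$ and on the right by $e_j$ to extract $r\,r(e_j)\in I$, which contradicts $r(e_j)\notin H$ via vertex-basicness. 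This step is precisely where Definition \ref{defn3.5} is used in an essential way beyond mere vertex-basicness. Once all three cases are verified, $\psi$ is injective and $I=I(H,S)$ is graded.

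For the converse, suppose $E$ fails Condition (K). By Lemma \ref{lem3.15} there is an admissible pair $(H,S)$ for which $E/(H,S)$ fails Condition (L), and then Lemma \ref{lem3.17} produces an ideal $J$ of $L_R(E/(H,S))$ that is basic but not graded. Let $\tilde J:=\pi^{-1}(\phi(J))\subseteq L_R(E)$, where $\pi:L_R(E)\to L_R(E)/I(H,S)$ is the quotient. Since $\phi$ and $\pi$ are graded, $\tilde J$ graded would force $J$ graded, so $\tilde J$ is not graded. Showing that $\tilde J$ is basic is a case analysis on the test element: for $x\in E^0$, the element $\phi^{-1}(\pi(x))$ is $0$ (if $x\in H$), a single vertex of $E/(H,S)$, or the orthogonal sum $v+v'$ of a vertex and a sink (when $v\in B_H\setminus S$), and in the last case one recovers each summand separately by multiplying with $v$ or $v'$ and invokes basicness of $J$; for $x=v-\sum_{i=1}^{n}e_ie_i^*$, one first absorbs the edges with $r(e_i)\in H$ into $I(H,S)\subseteq\tilde J$, then uses the relation $e_ie_i'^*=0$ in $L_R(E/(H,S))$ (both $e_i,e_i'$ have source $s(e_i)$ but distinct ranges, so $r(e_i)r(e_i)'=0$) to verify that the image $\phi^{-1}(\pi(x))$ is of the form $w-\sum f_jf_j^*$, possibly plus an isolated sink summand $v'$ when $v\in B_H\setminus S$, and again appeals to basicness of $J$. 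The main bookkeeping obstacle in this direction is tracking how the three types of vertex in $E/(H,S)$ interact under $\phi$.
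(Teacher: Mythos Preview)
Your argument is correct and follows the same route as the paper's proof: in both directions you use Theorem~\ref{thm3.10}(3), Lemma~\ref{lem3.15}, and in the forward direction the Cuntz--Krieger Uniqueness Theorem, while in the converse you pull back the non-graded basic ideal supplied by Lemma~\ref{lem3.17}. Two remarks: first, you call $\psi$ a \emph{graded} homomorphism, but since $I$ is not yet known to be graded this is not justified---fortunately it is also unnecessary, since you invoke the Cuntz--Krieger (not the Graded) Uniqueness Theorem; second, your careful case analysis showing that $\tilde J$ is basic in $L_R(E)$ actually fills in a detail that the paper asserts without proof.
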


\begin{proof}
Suppose that $E$ satisfies Condition (K) and $I$ is a basic ideal of $L_R(E)$. We show that $I$ is graded. If $H:=I\cap E^0$ and $S=\{v:v^H\in I\}$, then $(H,S)$ is an admissible pair in $E$ and we have $I(H,S)\subseteq I$. Let $\phi:L_R(E/(H,S))\cong L_R(E)/I(H,S)\rightarrow L_R(E)/I$ be the canonical surjection. Since $I$ is basic, we have $\phi(rv)\neq 0$ for all $v\in (E/(H,S))^0$ and $r\in R\setminus\{0\}$. Also, since $E$ satisfies Condition (K), Lemma \ref{lem3.15} implies that the quotient graph $E/(H,S)$ satisfies Condition (L). Therefore, the Cuntz-Krieger Uniqueness Theorem (\cite[Theorem 6.5]{tom1}) implies that $\phi$ is injective and so $I=I(H,S)$. Hence $I$ is graded by Lemma \ref{lem3.3}.

Conversely, suppose that $E$ does not satisfy Condition (K). By Lemma \ref{lem3.15}, there exists an admissible pair $(H,S)$ in $E$ such that the quotient graph $E/(H,S)$ does not satisfy Condition (L). So there exists a closed simple path without exits in $E/(H,S)$. Thus Lemma \ref{lem3.17} gives that the algebra $L_R(E/(H,S))\cong L_R(E)/I(H,S)$ contains a basic ideal $I$ which is not graded. If $\pi:L_R(E)\rightarrow L_R(E)/I(H,S)$ be the quotient map, then $\pi$ is a graded homomorphism. Hence, $\pi^{-1}(I)$ is a basic ideal of $L_R(E)$ which is not graded. This completes the proof.
\end{proof}

\begin{corollary}\label{cor3.19}
Let $E$ be a graph satisfying Condition (K) and let $R$ be a unital commutative ring. Then the map $(H,S)\mapsto I(H,S)$ is a one-to-one correspondence between the set of admissible pairs in $E$ and the set of basic ideals of $L_R(E)$.
\end{corollary}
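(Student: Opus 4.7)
The plan is to derive this corollary as an immediate consequence of two earlier results in the paper, with essentially no new work required. The key observation is that we have already established both halves of the statement separately, and they combine cleanly when $E$ satisfies Condition~(K).

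First, I would invoke Theorem~\ref{thm3.18}: since $E$ satisfies Condition~(K), every basic ideal of $L_R(E)$ is graded. Consequently, the set of basic ideals of $L_R(E)$ and the set of graded basic ideals of $L_R(E)$ coincide. Second, I would invoke Theorem~\ref{thm3.10}(4), which asserts that the map $(H,S) \mapsto I(H,S)$ is a one-to-one correspondence between the set of admissible pairs in $E$ and the set of graded basic ideals of $L_R(E)$. Combining these two facts yields a one-to-one correspondence between admissible pairs in $E$ and basic ideals of $L_R(E)$, which is precisely the statement of the corollary.

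There is essentially no obstacle to this proof, since the two ingredient theorems have already been established. The only point worth noting explicitly in writing up the proof is that $I(H,S)$ is indeed a basic ideal for every admissible pair $(H,S)$ --- this is guaranteed by Lemma~\ref{lem3.9} and was already used inside the proof of Theorem~\ref{thm3.10}(4) --- so the map $(H,S) \mapsto I(H,S)$ really lands in the set of basic ideals, and under Condition~(K) its image fills out all basic ideals because every basic ideal is graded and hence in the image by Theorem~\ref{thm3.10}(4). Injectivity is inherited directly from Theorem~\ref{thm3.10}(4) via Lemma~\ref{lem3.7}.
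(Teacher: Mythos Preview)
Your proposal is correct and matches the paper's approach: the corollary is stated without proof in the paper precisely because it follows immediately from combining Theorem~\ref{thm3.18} (under Condition~(K) every basic ideal is graded) with Theorem~\ref{thm3.10}(4) (the bijection between admissible pairs and graded basic ideals). Your additional remark about Lemma~\ref{lem3.9} is accurate but unnecessary, since it is already subsumed in Theorem~\ref{thm3.10}(4).
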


%%%%%%%%%%%%%%%%%%%%%%%%%%%%%%%%%%%%%%%%%%%%%%%%
%%%%%%%%%%%%%%%%%%%%%%%%%%%%%%%%%%%%%%%%%%%%%%%%
\section{Prime ideals of Leavitt path algebras}

Having determined the graded basic ideals of a Leavitt path algebra $L_R(E)$, in this section we characterize the prime graded basic ideal of $L_R(E)$. If $E$ satisfies Condition (K) and $R$ is a field, every ideal of $L_R(E)$ is graded and basic. Therefore in this case, all prime ideals of $L_R(E)$ are determined. Our results are generalizations of the results of \cite[Section 2]{ara3} which were for a Levitt path algebra $L_K(E)$, where $E$ is a row-finite graph and $K$ is a field. We show that there is a relation between the prime graded basic ideals of $L_R(E)$ and special subsets of $E^0$ which are called maximal tails in $E$.

For $v,w\in E^0$, we denote $v\geq w$ if there exists a path from $v$ to $w$. Also, for a vertex $v\in E^0$ the tree of $v$ is the set $T(v):=\{w\in E^0:v\geq w\}$. Note that $T(v)$ is a hereditary  subset of $E^0$, and if $I$ is an ideal of $\lr$ containing $v$, then $\overline{T(v)}\subseteq I$. Now we recall the definition of maximal tails in $E$.

\begin{definition}[\cite{bat1}]
Let $E$ be a graph. A nonempty subset $M\subseteq E^0$ is called a \emph{maximal tail} if it satisfies the following properties:
\begin{enumerate}[(MT1)]
\item if $v\in E^0,w\in M,$ and $v\geq w$, then $v\in M$,
\item if $v\in M$ with $0<|s^{-1}(v)|<\infty$, then there exists $e\in E^1$ such that $s(e)=v$ and $r(e)\in M$, and
\item for every $v,w\in M$ there exists $y\in M$ such that $v\geq y$ and $w\geq y$.
\end{enumerate}
\vspace{1mm}
The collection of all maximal tails in $E$ is denoted by $M(E)$. Note that a subset $X\subseteq E^0$ satisfies Conditions (MT1) and (MT2) if and only if $E^0\setminus X$ is hereditary and saturated. Also, if $M$ is a maximal tail in $E$, Condition (MT3) implies that $T(v)\cap T(w)\cap M\neq\emptyset$ for any pair $v,w\in M$.
\end{definition}
We need the following lemma to prove Lemma \ref{lem4.2}.
\begin{lemma}\label{lem4.1}
Let $E$ be a graph, and let $(H_1,S_1)$ and $(H_2,S_2)$ be two admissible pairs in $E$. Then
$$I(H_1,S_1)\cap I(H_2,S_2)=I\left(H_1\cap H_2,~(H_1\cup S_1)\cap(H_2\cup S_2)\cap B_{H_1\cap H_2}\right).$$
\end{lemma}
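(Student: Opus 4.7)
The plan is to invoke Theorem \ref{thm3.10}(4). Intersections of graded ideals are graded, and intersections of basic ideals are trivially basic, so $I(H_1,S_1)\cap I(H_2,S_2)$ is a graded basic ideal. By Theorem \ref{thm3.10}(4) it therefore equals $I(H',S')$ for a unique admissible pair $(H',S')$, and the task reduces to identifying $(H',S')$. Applying Lemma \ref{lem3.7} to each $I(H_i,S_i)$ gives $H'=H_1\cap H_2=:H$ at once, since a vertex of $E^0$ lies in both ideals iff it lies in each $H_i$. Setting $S:=(H_1\cup S_1)\cap(H_2\cup S_2)\cap B_H$, the identification $S'=S$ is equivalent to the following claim, for each $v\in B_H$ and each $i\in\{1,2\}$:
\[
v^H\in I(H_i,S_i)\iff v\in H_i\cup S_i.\qquad(\ast)
\]

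The $(\Leftarrow)$ direction of $(\ast)$ is a direct calculation from the finite expansion $v^H=v-\sum_{s(e)=v,\,r(e)\notin H}ee^*$. If $v\in H_i$, then $v\in I(H_i,S_i)$ and each term $ee^*=v\,ee^*$ lies in $I(H_i,S_i)$. If instead $v\in S_i\setminus H_i$, then Lemma \ref{lem3.7} gives $v^{H_i}\in I(H_i,S_i)$, and $v^H-v^{H_i}=-\sum_{s(e)=v,\,r(e)\in H_i\setminus H}ee^*$ is a finite sum (a subset of the edges indexed in $v^H$) of elements $ee^*=e\,r(e)\,e^*\in I(H_i,S_i)$.

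For $(\Rightarrow)$, suppose $v\in B_H$ with $v\notin H_i\cup S_i$; I apply the homomorphism $\phi\colon L_R(E)\to L_R(E/(H_i,S_i))$ from the proof of Lemma \ref{lem3.9}, which vanishes on $I(H_i,S_i)$, and show $\phi(v^H)\neq 0$. There are two sub-cases. If $v\in B_{H_i}\setminus S_i$, then $\phi(v)=v+v'$; partitioning the edges $e$ with $s(e)=v,\,r(e)\notin H$ according to whether $r(e)$ lies in $H_i\setminus H$, in $B_{H_i}\setminus S_i$, or in $(E^0\setminus H_i)\setminus(B_{H_i}\setminus S_i)$, and invoking the finite-emitter relation at $v$ in $L_R(E/(H_i,S_i))$ (applicable because $v$ emits finitely many, and at least one, edges there), the expansion of $\phi(v^H)$ collapses to the single primed vertex $v'$. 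If instead $v\notin B_{H_i}$, the inclusion $\{e:s(e)=v,\,r(e)\notin H_i\}\subseteq\{e:s(e)=v,\,r(e)\notin H\}$ and the finiteness of the latter (since $v\in B_H$) force the former to be empty; then every $\phi(ee^*)$ in the expansion vanishes and $\phi(v^H)=\phi(v)=v$. In both sub-cases $\phi(v^H)$ is a nonzero vertex of $L_R(E/(H_i,S_i))$ by linear independence of vertices, contradicting $v^H\in I(H_i,S_i)\subseteq\ker\phi$. The principal obstacle is the collapse calculation in the first sub-case, where one must match $\phi$'s prescribed values on edges with the Cuntz--Krieger relation at $v$ in the quotient graph carefully enough to cancel all terms down to $v'$; the rest is routine bookkeeping.
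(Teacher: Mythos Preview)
Your proof is correct and follows essentially the same plan as the paper: invoke Theorem~\ref{thm3.10}(4) to write the intersection as $I(K,T)$, identify $K=H_1\cap H_2$ via Lemma~\ref{lem3.7}, and then verify the equivalence $(\ast)$ for each $v\in B_K$, with the $(\Leftarrow)$ direction done by the same direct calculation. The only difference is in $(\Rightarrow)$: rather than computing $\phi(v^H)$ in the quotient, the paper simply observes that if $v^K\in I_i$ then either every edge in the finite sum defining $v^K$ has range in $H_i$ (whence $v\in I_i$ and so $v\in H_i$), or else $v\in B_{H_i}$ and the identity $v^{H_i}=v^{K}+\sum_{s(e)=v,\,r(e)\in H_i\setminus K}ee^*$ gives $v^{H_i}\in I_i$, so $v\in S_i$ by Lemma~\ref{lem3.7}. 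This avoids the collapse calculation with $\phi$ entirely and is shorter, though your route via the quotient map is perfectly valid.
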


\begin{proof}
For simplicity, we set $I_1:=I(H_1,S_1)$ and $I_2:=I(H_2,S_2)$. Since the intersection of graded basic ideals is also graded basic, Theorem \ref{thm3.10}(4) implies that $I_1\cap I_2=I(K,T)$ for an admissible pair $(K,T)$. We show that $K=H_1\cap H_2$ and $T=(H_1\cup S_1)\cap(H_2\cup S_2)\cap B_{H_1\cap H_2}$.

By applying Lemma \ref{lem3.7} twice, we get that
$$K=I(K,T)\cap E^0=(I_1\cap E^0)\cap (I_2\cap E^0)=H_1\cap H_2.$$
It remains to show $T=(H_1\cup S_1)\cap(H_2\cup S_2)\cap B_K$. Note that Lemma \ref{lem3.7} implies that $T=\{v\in B_K:v^K\in I_1\cap I_2\}$. Let $v\in B_K$; we show that $v\in (H_1\cup S_1)\cap(H_2\cup S_2)$ if and only if $v\in T$. This is equivalent to $v\in(H_1\cup S_1)\cap(H_2\cup S_2)$ if and only if $v^K\in I_1\cap I_2$.

Suppose that $v\in (H_1\cup S_1)\cap(H_2\cup S_2)$ and fix $i\in\{1,2\}$. If $v\in H_i$, then
$$v^K=v-\sum_{s(e)=v,r(e)\notin K}ee^*=v-\sum_{s(e)=v,r(e)\notin K} vee^*\in I_i.$$
If $v\in S_i$, then $v^{H_i}\in I_i$ and so

\begin{align*}
v^K&=v-\sum_{s(e)=v,r(e)\notin K}ee^*\\
&=v-\sum_{s(e)=v,r(e)\notin H_i}ee^*-\sum_{s(e)=v,r(e)\in H_i\setminus K}ee^*\\
&=v^{H_i}-\sum_{s(e)=v,r(e)\in H_i\setminus K}ee^*\in I_i.
\end{align*}
(Note that if $r(e)\in H_i$, then $ee^*=er(e)e^*\in I_i$.) So, in all cases we have $v^K\in I_i$. Consequently, $v^K\in I_1\cap I_2$.

For the reverse, suppose that $v^K\in I_1\cap I_2$ and fix $i\in \{1,2\}$. If for each of the finitely many edges $e$ with $s(e)=v$ and $r(e)\notin K$ we have $r(e)\in H_i$, then $v=v^K+\sum_{s(e)=v,r(e)\notin K}ee^*\in I_i$ and so $v\in H_i$. If $r(e)\notin H_i$ for some such $e$, then
\begin{align*}
v^{H_i}&=v-\sum_{s(e)=v,r(e)\notin H_i}ee^*\\
&=v-\sum_{s(e)=v,r(e)\notin K}ee^*+\sum_{s(e)=v,r(e)\in H_i\setminus K}ee^*\\
&=v^{K}+\sum_{s(e)=v,r(e)\in H_i\setminus K}ee^*\in I_i
\end{align*}
and so $v\in S_i$ by Lemma \ref{lem3.7}. Therefore, in each case we have $v\in H_i\cup S_i$ and the proof is completed.
\end{proof}

Recall that an ideal (graded ideal) $I$ of a ring (graded ring) $R$ is said to be \emph{prime} (\emph{graded prime}) if for every pair of ideals (graded ideals) $J_1,J_2$ of $R$ with $J_1J_2\subseteq I$, we have either $J_1\subseteq I$ or $J_2\subseteq I$. It follows from \cite[Propositin II.1.4]{nas} that for a graded algebra, a graded ideal is prime if and only if it is graded prime. We use this fact in the proof of Proposition \ref{prop4.4}. Also, a ring $R$ is said to be prime if the zero ideal of $R$ is prime.

\begin{lemma}\label{lem4.2}
Let $E$ be a graph and let $R$ be a unital commutative ring. If $I$ is a prime ideal of $L_R(E)$ and $M:=E^0\setminus(I\cap E^0)$, then $M$ is a maximal tail in $E$.
\end{lemma}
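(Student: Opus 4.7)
\emph{Proof plan.} The plan is to verify the three defining conditions of a maximal tail for $M = E^0 \setminus H$, where $H := I \cap E^0$.

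First, I would check that $M$ is nonempty. Since $I$ is prime it is proper, and if every vertex lay in $I$, then every edge $e = s(e)e$ would be in $I$, and then by the spanning set \eqref{2.1} we would have $I = \lr$, a contradiction.

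Next, I would verify conditions (MT1) and (MT2). By the remark following the definition of maximal tail, these are jointly equivalent to $H$ being hereditary and saturated. Hereditariness follows because $v \in H$ and $s(e)=v$ imply $e = ve \in I$, whence $r(e) = e^*e \in H$. Saturation follows from relation~(4) in Definition~\ref{defn1.4}: if $v$ is a finite emitter with $r(s^{-1}(v)) \subseteq H$, then each $ee^* = e\,r(e)\,e^*$ lies in $I$, so $v = \sum_{s(e)=v}ee^* \in H$.

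The main step is (MT3). Fix $v,w \in M$ and let $\langle v\rangle$, $\langle w\rangle$ denote the two-sided ideals of $\lr$ generated by $v$ and $w$ respectively. Since $v$ is an idempotent with $v = v\cdot v\cdot v \in \langle v\rangle$ and $v \notin I$, neither $\langle v\rangle$ nor $\langle w\rangle$ is contained in $I$. By primeness of $I$, the product ideal $\langle v\rangle\cdot\langle w\rangle$ is not contained in $I$; since every element of this product is a finite sum of terms $a v b w d$ with $a,b,d \in \lr$, at least one such term lies outside $I$, and therefore $vbw \notin I$ for some $b \in \lr$. Expanding via \eqref{2.1}, $vbw$ is a finite sum $\sum_i r_i \alpha_i\beta_i^*$ with $r_i \in R\setminus\{0\}$, $s(\alpha_i)=v$, $s(\beta_i)=w$, and $r(\alpha_i)=r(\beta_i)$. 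If $r(\alpha_i) \in I$ for every $i$, then every summand satisfies $\alpha_i\beta_i^* = \alpha_i\,r(\alpha_i)\,\beta_i^* \in I$, forcing $vbw \in I$ and giving a contradiction. Choosing an index $i$ with $y := r(\alpha_i) \notin I$ produces the desired element of $M$, and the paths $\alpha_i$, $\beta_i$ witness $v \geq y$ and $w \geq y$.

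The only subtle point is extracting the single-element witness ``$vbw \notin I$'' from ``$\langle v\rangle\langle w\rangle \not\subseteq I$''; once that is in hand, the path-algebra decomposition immediately supplies a vertex surviving modulo $I$. I therefore do not expect to need Lemma~\ref{lem4.1} here, though an alternative route via $\langle v\rangle\cap\langle w\rangle \supseteq \langle v\rangle\langle w\rangle$ would identify the intersection as $I(\overline{T(v)}\cap\overline{T(w)},\emptyset)$ by Lemma~\ref{lem4.1} and would then require an extra saturation argument to descend from a witness in $\overline{T(v)}\cap\overline{T(w)}$ to one in $T(v)\cap T(w)$.
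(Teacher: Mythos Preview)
Your argument is correct and takes a genuinely different route from the paper. For (MT3) the paper argues by contradiction: assuming $T(v)\cap T(w)\cap M=\emptyset$, it shows via an auxiliary saturation argument that $\overline{T(v)}\cap\overline{T(w)}\subseteq I\cap E^0$, then invokes Lemma~\ref{lem4.1} to get $I(\overline{T(v)},\emptyset)\cap I(\overline{T(w)},\emptyset)\subseteq I$, and finishes with primeness. Your approach is more direct and elementary: from $\langle v\rangle\langle w\rangle\not\subseteq I$ you extract a single term $avbwd\notin I$, hence $vbw\notin I$, expand $vbw$ via the spanning set \eqref{2.1}, and read off a surviving common descendant $y=r(\alpha_i)\in M$. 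This bypasses Lemma~\ref{lem4.1} and the saturation-closure discussion entirely, and it works uniformly over any coefficient ring $R$ without appealing to the ideal classification of Section~3. The paper's route, on the other hand, illustrates how the structural description of graded basic ideals (Theorem~\ref{thm3.10} and Lemma~\ref{lem4.1}) feeds into the prime-spectrum analysis, which is the organizing theme of Section~4. You also explicitly verify that $M$ is nonempty, a point the paper leaves implicit.
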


\begin{proof}
Since $I\cap E^0$ is hereditary and saturated, $M$ satisfies  Conditions (MT1) and (MT2).

We show that $M$ also satisfies Condition (MT3). We assume on the contrary that there exist $v,w\in M$ such that there is no $y\in M$ as (MT3). Then $T(v)\cap T(w)\cap M=\emptyset$ and so $T(v)\cap T(w)\subseteq I\cap E^0$.

Furthermore, the set $(I\cap E^0)\cup(E^0\setminus T(v))$ is saturated. Indeed, let $x\in E^0$ be such that $0<|s^{-1}(x)|<\infty$ and $r(e)\in (I\cap E^0)\cup(E^0\setminus T(v))$ for each edge $e$ with $s(e)=x$. If for all such edges $r(e)\in I\cap E^0$, then we have $x\in I\cap E^0$ because $I\cap E^0$ is saturated. If there is at least one $e$ with $s(e)\notin T(v)$, then we have $x\in E^0\setminus T(v)$ by the hereditary property of $T(v)$. Hence in all cases we have $x\in (I\cap E^0)\cup(E^0\setminus T(v))$ and so, $(I\cap E^0)\cup(E^0\setminus T(v))$ is saturated. Thus $T(w)\subseteq (I\cap E^0)\cup(E^0\setminus T(v))$ yields that $\overline{T(w)}\subseteq (I\cap E^0)\cup(E^0\setminus T(v))$. Similarly, we get that $\overline{T(v)}\subseteq (I\cap E^0)\cup(E^0\setminus T(w))$, and the hypothesis $T(v)\cap T(w)\subseteq I\cap E^0$ implies that $\overline{T(v)}\cap \overline{T(w)}\subseteq I\cap E^0$. So by Lemma \ref{lem4.1}, we have
$$I(\overline{T(v)},\emptyset)\cap I(\overline{T(w)},\emptyset)=I(\overline{T(v)}\cap \overline{T(w)},\emptyset)\subseteq I.$$
But the primeness of $I$ implies that either $I(\overline{T(v)},\emptyset)\subseteq I$ or $I(\overline{T(w)},\emptyset)\subseteq I$, which is impossible because neither the ideal of $L_R(E)$ generated by $v$ nor generated by $w$ is contained in $I$. Thus there must exist $y\in M$ such that $v\geq y$ and $w\geq y$, and consequently, $M$ satisfies Condition (MT3).
\end{proof}

\begin{lemma}\label{lem4.3}
Let $I$ be a nonzero graded ideal of $L_R(E)$. Then there exists $rv\in I$ for some $v\in E^0$ and some $r\in R\setminus\{0\}$.
\end{lemma}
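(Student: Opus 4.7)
The plan is to deduce Lemma \ref{lem4.3} as an immediate contrapositive of the Graded Uniqueness Theorem \cite[Theorem 5.3]{tom1}, which has already been invoked repeatedly in Section 3 of this excerpt (for instance in the proofs of Theorem \ref{thm3.10}(3) and (4)). Recall that this theorem asserts that a graded $R$-algebra homomorphism $\phi : L_R(E) \to A$ is injective provided $\phi(rv) \neq 0$ for every $v \in E^0$ and every $r \in R \setminus \{0\}$.

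My proof would proceed as follows. Consider the quotient map $\pi : L_R(E) \to L_R(E)/I$. Because $I$ is a graded ideal, the discussion following the definition of a graded ring (in Section 2) shows that $L_R(E)/I$ inherits a natural $\mathbb{Z}$-grading with homogeneous components $\{L_R(E)_k + I\}_{k \in \mathbb{Z}}$, and that $\pi$ is a graded $R$-algebra homomorphism of degree zero. Since $I$ is nonzero by hypothesis, $\pi$ fails to be injective.

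Applying the contrapositive of the Graded Uniqueness Theorem to $\pi$ then produces some $v \in E^0$ and some $r \in R \setminus \{0\}$ for which $\pi(rv) = 0$; equivalently, $rv \in \ker\pi = I$, which is precisely the conclusion of the lemma.

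There is essentially no obstacle to this argument: the entire proof is a one-line application of a theorem that has already been freely used in earlier proofs of this paper, so no additional reduction, normal form, or combinatorial manipulation of spanning monomials $\alpha\beta^*$ is required.
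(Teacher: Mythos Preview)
Your proof is correct, but it takes a different route from the paper's. The paper argues directly: since $I$ is graded it is generated by its degree-zero part $I\cap L_R(E)_0$ (\cite[Lemma~5.1]{tom1}), and then \cite[Lemma~5.2]{tom1} is applied to a nonzero $x\in I\cap L_R(E)_0$ to produce paths $\alpha,\beta$ with $\alpha^*x\beta=rv$ for some $v\in E^0$ and $r\in R\setminus\{0\}$. Your argument instead packages all of this inside the Graded Uniqueness Theorem and reads off the conclusion from its contrapositive applied to the quotient map. Your approach is shorter and more conceptual, and is entirely legitimate given that the Graded Uniqueness Theorem is already freely used in Section~3; the paper's approach is slightly more hands-on, essentially unwinding the first step of that theorem's proof to exhibit $rv$ explicitly.
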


\begin{proof}
Since $I$ is a graded ideal, $I$ is generated by $I\cap L_R(E)_0$ (see \cite[Lemma 5.1]{tom1}). If $x$ is a nonzero element in $I\cap L_R(E)_0$, then \cite[Lemma 5.2]{tom1} implies that there exist $\alpha,\beta\in \mathrm{Path}(E)$ such that $\alpha^* x\beta=rv$ for some $v\in E^0$ and some $r\in R\setminus\{0\}$. Therefore, $rv\in I$ as desired.
\end{proof}

In the next proposition, we give two necessary and sufficient conditions for the primeness of Leavitt path algebras. Furthermore, this results will be a tool for the next results of this section to characterize prime graded basic ideals of Leavitt path algebras.

\begin{proposition}\label{prop4.4}
Let $E$ be a graph and let $R$ be a unital commutative ring. Then the following are equivalent.
\begin{enumerate}[$(1)$]
\item $L_R(E)$ is a prime ring.
\item $R$ is an ID (integral domain) and $E$ satisfies Condition (MT3).
\item $R$ is an ID and $I\cap J\cap E^0\neq \emptyset$ for all graded basic ideals $I$ and $J$ of $\lr$.
\end{enumerate}
\end{proposition}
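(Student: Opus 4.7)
The plan is to prove the implications cyclically: $(1) \Rightarrow (2) \Rightarrow (3) \Rightarrow (1)$. For $(1) \Rightarrow (2)$, the integral-domain property of $R$ follows by contradiction: if $r_1 r_2 = 0$ in $R$ with $r_1, r_2 \neq 0$, then $r_1 L_R(E)$ and $r_2 L_R(E)$ are two-sided ideals of $L_R(E)$ (using centrality of $R$), each nonzero since $r_i v \neq 0$ for every $v \in E^0$ (as recorded after (2.1)), yet their product is contained in $r_1 r_2 L_R(E) = 0$, contradicting primeness. Condition (MT3) for $E$ then comes from applying Lemma \ref{lem4.2} to the prime (zero) ideal: $M = E^0 \setminus \emptyset = E^0$ must be a maximal tail, and since (MT1) and (MT2) are automatic for $M = E^0$, property (MT3) holds throughout $E$.

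For $(2) \Rightarrow (3)$, let $I$ and $J$ be nonzero graded basic ideals. Lemma \ref{lem4.3} supplies some $r_1 v_1 \in I$ with $v_1 \in E^0$ and $r_1 \in R \setminus \{0\}$, and since $I$ is basic, $v_1 \in I$; similarly choose $v_2 \in J \cap E^0$. Applying (MT3) to $v_1, v_2$ produces $y \in E^0$ with $v_1 \geq y$ and $v_2 \geq y$. Fixing a path $\alpha$ from $v_1$ to $y$, we have $\alpha = v_1 \alpha \in I$, hence $y = \alpha^* \alpha \in I$; symmetrically $y \in J$, and so $y \in I \cap J \cap E^0$.

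For $(3) \Rightarrow (1)$, use the fact recalled just above the proposition that a graded ideal is prime if and only if it is graded prime, so it suffices to show the zero ideal is graded prime. Suppose $J_1, J_2$ are nonzero graded ideals of $L_R(E)$; Lemma \ref{lem4.3} gives $r_i v_i \in J_i$ with $v_i \in E^0$ and $r_i \in R \setminus \{0\}$. Let $I_i$ be the ideal generated by $v_i$, which is graded and basic (by Lemma \ref{lem3.8} and Theorem \ref{thm3.10}). The centrality of $r_i$ together with the two-sidedness of $J_i$ gives $r_i I_i = L_R(E)(r_i v_i) L_R(E) \subseteq J_i$. By (3) there exists $y \in I_1 \cap I_2 \cap E^0$, so $r_i y \in J_i$ for each $i$, and $(r_1 y)(r_2 y) = r_1 r_2 y$ lies in $J_1 J_2$; since $R$ is an integral domain, $r_1 r_2 \neq 0$, so $r_1 r_2 y \neq 0$, showing $J_1 J_2 \neq 0$. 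The main obstacle is exactly this last direction: hypothesis (3) controls only graded basic ideals, while graded primeness must be verified for all graded ideals; the device of passing from $r_i v_i \in J_i$ to the inclusion $r_i I_i \subseteq J_i$ of a graded basic ideal (scaled by $r_i$) is what bridges the gap, and it is here that the integral-domain assumption on $R$ is used in an essential way.
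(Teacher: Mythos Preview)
Your proof is correct and follows essentially the same route as the paper's: the cyclic scheme $(1)\Rightarrow(2)\Rightarrow(3)\Rightarrow(1)$, the use of Lemma~\ref{lem4.2} and the ideals $r_iL_R(E)$ for $(1)\Rightarrow(2)$, and for $(3)\Rightarrow(1)$ the reduction to graded primeness together with the passage from $r_iv_i\in J_i$ to the graded basic ideal $I_i=I(\overline{T(v_i)},\emptyset)$ with $r_iI_i\subseteq J_i$ are exactly the paper's devices. The only cosmetic difference is in $(2)\Rightarrow(3)$, where the paper invokes Theorem~\ref{thm3.10}(4) directly to extract a vertex from a nonzero graded basic ideal, whereas you go via Lemma~\ref{lem4.3} plus basicness; both are immediate.
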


\begin{proof}
(1) $\Rightarrow$ (2). Suppose that $L_R(E)$ is a prime ring. Then $E^0$ is a maximal tail by Lemma \ref{lem4.2} and so $E$ satisfies Condition (MT3). If there exist nonzero elements $r,s\in R$ with $rs=0$, then $rL_R(E)$ and $sL_R(E)$ are nonzero ideals of $L_R(E)$ with $(rL_R(E))(sL_R(E)) = rsL_R(E)= (0)$, contradicting the primeness of $L_R(E)$.

(2) $\Rightarrow$ (3). If $E$ satisfies Condition (MT3) and $I,J$ are two graded basic ideals of $\lr$, then by Theorem \ref{thm3.10}(4) there exist vertices $v\in I$ and $w\in J$. Hence, Condition $(MT3)$ yields that there is $y\in E^0$ such that $v,w\geq y$ and so $y\in I\cap J$ as desired.

(3) $\Rightarrow$ (1). Suppose that $R$ is an ID and $I\cap J\cap E^0\neq \emptyset$ for every graded basic ideals $I$ and $J$ of $\lr$. We show that the zero ideal of $L_R(E)$ is prime. Since the zero ideal is graded, it suffices to show $IJ\neq (0)$ for any pair of nonzero graded ideals $I,J$ of $L_R(E)$. Let $I$ and $J$ be two nonzero graded ideals of $L_R(E)$. Then, by Lemma \ref{lem4.3}, there exist elements $rv\in I$ and $sw\in J$, where $v,w\in E^0$ and $r,s\in R\setminus\{0\}$. If we let $I':=I(\overline{T(v)},\emptyset)$ and $J':=I(\overline{T(w)},\emptyset)$, then $rI'\subseteq I$ and $sJ'\subseteq J$. Since $I'$ and $J'$ are two graded basic ideals, there is $y\in I'\cap J'\cap E^0$ and so $rsy\in IJ$. On the other hand, because $R$ is an ID, $rs$ is a nonzero element of $R$ and hence $rsy\neq 0$. Therefore, $IJ\neq (0)$ and consequently the zero ideal is prime.
\end{proof}

\begin{corollary}\label{cor4.5}
Let $E$ be a graph and let $R$ be a unital commutative ring. If $H$ is a saturated hereditary subset of $E^0$, then the ideal $I(H,B_H)$ is prime if and only if $R$ is an ID and $M:=E^0\setminus H$ is a maximal tail.
\end{corollary}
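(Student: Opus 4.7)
The strategy is to apply Proposition~\ref{prop4.4} to the quotient Leavitt path algebra. By Theorem~\ref{thm3.10}(3) we have a canonical isomorphism
\[
L_R(E)/I(H,B_H) \;\cong\; L_R\bigl(E/(H,B_H)\bigr),
\]
so $I(H,B_H)$ is prime in $L_R(E)$ if and only if $L_R(E/(H,B_H))$ is a prime ring. By Proposition~\ref{prop4.4}, the latter is equivalent to $R$ being an integral domain together with the graph $E/(H,B_H)$ satisfying Condition~(MT3). It therefore suffices to show that $E/(H,B_H)$ satisfies~(MT3) if and only if $M := E^0 \setminus H$ is a maximal tail in $E$.

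First I would unpack the quotient graph. With $S = B_H$ in Definition~\ref{defn3.6} the set $B_H \setminus S$ is empty, so $(E/(H,B_H))^0 = M$ and $(E/(H,B_H))^1 = \{ e \in E^1 : r(e) \in M \}$. Because $H$ is hereditary, $s(e) \in M$ for any such edge $e$, so $E/(H,B_H)$ is precisely the full subgraph of $E$ supported on $M$, with source and range inherited from~$E$.

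The key technical point is that any path of $E$ whose source and range both lie in $M$ is automatically contained in $M$: once a path enters the hereditary set $H$ it cannot leave. Thus the paths of $E/(H,B_H)$ coincide with the paths of $E$ having both endpoints in $M$, and the reachability relation $v \geq y$ in $E/(H,B_H)$ agrees with the reachability relation in $E$ restricted to $M$. This makes Condition~(MT3) for the graph $E/(H,B_H)$ literally identical to Condition~(MT3) of the maximal tail definition for $M \subseteq E^0$.

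Finally, since $H$ is saturated and hereditary, the remark following the definition of maximal tail guarantees that $M$ automatically satisfies~(MT1) and~(MT2). Hence $M$ is a maximal tail exactly when it satisfies~(MT3), and the corollary follows. The only step with any content is the path-identification argument in the third paragraph; everything else is a direct invocation of Theorem~\ref{thm3.10}(3) and Proposition~\ref{prop4.4}.
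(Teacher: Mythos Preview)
Your proof is correct and follows essentially the same approach as the paper: the paper's proof simply notes that $E/(H,B_H)=(E^0\setminus H,\,r^{-1}(E^0\setminus H),\,r,s)$ and then invokes Proposition~\ref{prop4.4} together with the isomorphism $L_R(E)/I(H,B_H)\cong L_R(E/(H,B_H))$ from Theorem~\ref{thm3.10}(3). You have unpacked the details more explicitly (in particular the path-identification argument showing that (MT3) for the quotient graph coincides with (MT3) for $M$ in $E$), but the underlying strategy is identical.
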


\begin{proof}
Recall that $E/(H,B_H)=(E^0\setminus H,r^{-1}(E^0\setminus H),r,s)$. Thus the statement follows from Proposition \ref{prop4.4} and the fact $L_R(E)/I(H,B_H)\cong L_R(E/(H,B_H))$.
\end{proof}

Following \cite{bat1}, for a non-empty subset $X$ of $E^0$ we denote
$$\Omega(X):=\{w\in E^0\setminus X: w\ngeq v~\mathrm{for~every~}v\in X\}.$$
In the next theorem, we use this notation to characterize prime graded basic ideals of a Leavitt path algebra. We first prove two lemmas.

\begin{lemma}\label{lem4.6}
If $X\subseteq E^0$ satisfies Condition (MT1), then $\Omega(X)=E^0\setminus M$.
\end{lemma}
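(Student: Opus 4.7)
The plan is a direct unwinding of the two definitions. Condition (MT1) says precisely that $X$ is closed under passing to ancestors, while $\Omega(X)$ is by definition the set of vertices outside $X$ that cannot reach $X$ along any path. Under (MT1) the ``cannot reach $X$'' requirement becomes automatic for every vertex outside $X$, so the two sets should coincide with $E^0 \setminus X$ (which I take to be the intended right-hand side).

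My plan is to verify both inclusions. The inclusion $\Omega(X) \subseteq E^0 \setminus X$ is built directly into the defining formula for $\Omega(X)$, so nothing needs to be shown. For the reverse inclusion, I would take an arbitrary $w \in E^0 \setminus X$ and suppose, toward a contradiction, that $w \geq v$ for some $v \in X$. Then Condition (MT1), applied with $w$ as the ancestor and $v \in X$ as the element of the subset, forces $w \in X$, contradicting $w \in E^0 \setminus X$. Hence $w \ngeq v$ for every $v \in X$, which is exactly the condition defining membership in $\Omega(X)$.

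I expect no real obstacle here; the lemma is essentially a restatement of (MT1) phrased in terms of the operator $\Omega$. The only point requiring a moment of care is the paper's convention that each vertex is regarded as a path of length zero to itself, so that $\geq$ is reflexive; this is harmless in the present argument because $w \in E^0 \setminus X$ and $v \in X$ force $w \neq v$, so any witness to $w \geq v$ is automatically a genuinely nontrivial path between distinct vertices and (MT1) applies without ambiguity.
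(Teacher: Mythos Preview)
Your proposal is correct and follows essentially the same argument as the paper: the inclusion $\Omega(X)\subseteq E^0\setminus X$ is immediate from the definition, and the reverse inclusion is obtained by contraposition using Condition~(MT1). Your remark that the right-hand side should read $E^0\setminus X$ rather than $E^0\setminus M$ is well-taken; the paper's own proof tacitly works with a set called $M$ throughout, so this is simply a notational inconsistency in the statement.
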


\begin{proof}
By definition we have $\Omega(M)\subseteq E^0\setminus M$. For the reverse inclusion, let $v\in E^0\setminus M$. If there is a vertex $w\in M$ with $v\geq w$, then we have $v\in M$ by Condition (MT1), which is a contradiction. Therefore, $E^0\setminus M\subseteq \Omega(M)$.
\end{proof}

\begin{corollary}
If $X\subseteq E^0$ satisfies Conditions (MT1) and (MT2), then $\Omega(X)$ is hereditary and saturated. In particular, if $M$ is a maximal tail in $E$, then $\Omega(M)=E^0\setminus M$ is hereditary and saturated.
\end{corollary}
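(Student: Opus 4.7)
The plan is to verify directly from the definitions that $\Omega(X)$ is both hereditary and saturated, using conditions (MT1) and (MT2) where needed, and then deduce the ``in particular'' clause from Lemma~\ref{lem4.6} applied to a maximal tail.

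For hereditariness, I would take $w\in\Omega(X)$ and an edge $e\in E^1$ with $s(e)=w$, and show $r(e)\in\Omega(X)$. First, $r(e)\notin X$: otherwise the single edge $e$ witnesses $w\geq r(e)$ with $r(e)\in X$, contradicting the defining property of $\Omega(X)$. Second, $r(e)\not\geq v$ for any $v\in X$: if $r(e)\geq v$ via some path $\beta$, then $e\beta$ is a path from $w$ to $v$, so $w\geq v$ with $v\in X$, again contradicting $w\in\Omega(X)$. Hence $r(e)\in E^0\setminus X$ and $r(e)\not\geq v$ for all $v\in X$, so $r(e)\in\Omega(X)$.

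For saturation, I would take $x\in E^0$ with $0<|s^{-1}(x)|<\infty$ and $r(s^{-1}(x))\subseteq\Omega(X)$, and show $x\in\Omega(X)$. Here condition (MT2) is the key ingredient: if $x$ were in $X$, then (MT2) would furnish some $e\in s^{-1}(x)$ with $r(e)\in X$, but by hypothesis $r(e)\in\Omega(X)\subseteq E^0\setminus X$, a contradiction. So $x\notin X$. Next, suppose $x\geq v$ for some $v\in X$ via a path $\alpha=e_1\cdots e_n$; since $x\notin X$, we have $n\geq 1$, and then $r(e_1)\geq v$ via $e_2\cdots e_n$ (or $r(e_1)=v$ if $n=1$), contradicting $r(e_1)\in\Omega(X)$. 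Thus $x\in\Omega(X)$, proving saturation.

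The ``in particular'' statement is then immediate: a maximal tail $M$ satisfies (MT1) and (MT2) by definition, so the first part gives that $\Omega(M)$ is hereditary and saturated, and Lemma~\ref{lem4.6} identifies $\Omega(M)$ with $E^0\setminus M$. There is no serious obstacle in this argument; the only point that needs care is remembering to invoke (MT2) to rule out the case $x\in X$ in the saturation step, since without (MT2) one cannot separate membership in $X$ from being a base of no path into $X$.
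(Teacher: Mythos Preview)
Your proof is correct. The paper gives no explicit proof for this corollary, but its intended argument is the one-line combination of Lemma~\ref{lem4.6} with the remark made right after the definition of maximal tails: since $X$ satisfies (MT1), Lemma~\ref{lem4.6} gives $\Omega(X)=E^0\setminus X$, and the remark states that $X$ satisfies (MT1) and (MT2) if and only if $E^0\setminus X$ is hereditary and saturated. Your approach instead verifies hereditary and saturated directly from the definition of $\Omega(X)$, postponing the use of Lemma~\ref{lem4.6} to the ``in particular'' clause. Both are fine; the paper's route is shorter, while yours has the minor advantage of showing that hereditariness of $\Omega(X)$ needs no hypothesis on $X$ and saturation needs only (MT2), with (MT1) entering only to identify $\Omega(M)$ with $E^0\setminus M$.
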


\begin{lemma}\label{lem4.8}
Let $E$ be a graph and let $R$ be a unital commutative ring. If $H$ is a saturated hereditary subset of $E^0$ and $v\in B_H$, then the ideal $I(H,B_H\setminus\{v\})$ of $L_R(E)$ is prime if and only if $R$ is an ID and $H=\Omega(v)$.
\end{lemma}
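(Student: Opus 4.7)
The plan is to apply Proposition~\ref{prop4.4} to the quotient algebra and then translate the resulting graph-theoretic condition into a statement about $\Omega(v)$. By Theorem~\ref{thm3.10}(3), we have $L_R(E)/I(H, B_H\setminus\{v\}) \cong L_R(F)$ where $F := E/(H, B_H\setminus\{v\})$. Proposition~\ref{prop4.4} then says $I(H, B_H\setminus\{v\})$ is prime if and only if $R$ is an ID and $F$ satisfies Condition (MT3). The task therefore reduces to showing that $F$ satisfies (MT3) precisely when $H = \Omega(v)$.

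The key observation is that $F^0 = (E^0\setminus H)\cup\{v'\}$ (since $B_H\setminus(B_H\setminus\{v\}) = \{v\}$), and the primed vertex $v'$ is a sink of $F$ by construction. Consequently, any common descendant of a pair of vertices containing $v'$ must itself be $v'$, so (MT3) on $F$ is equivalent to the single statement that every vertex of $F$ reaches $v'$. Translating back to $E$, a path in $F$ ending at $v'$ must end with an $e'$-edge (for some $e\in E^1$ with $r(e)=v$), which after deletion yields a path in $E$ ending at $v$; conversely, any path in $E$ ending at $v$ automatically stays in $E^0\setminus H$ by hereditariness of $H$ (since $v\notin H$) and so lifts to $F$. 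Hence (MT3) on $F$ amounts to the conjunction of: (a) every $y\in E^0\setminus H$ with $y\neq v$ satisfies $y\geq v$ in $E$; and (b) there is a closed path of positive length at $v$ all of whose vertices lie in $E^0\setminus H$.

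Condition (a) is precisely the inclusion $\Omega(v)\subseteq H$, while the reverse inclusion $H\subseteq\Omega(v)$ is automatic: any $w\in H$ must differ from $v$ (because $v\in B_H$ gives $v\notin H$) and cannot satisfy $w\geq v$ (hereditariness would force $v\in H$). So (a) alone is equivalent to $H=\Omega(v)$. Condition (b) is then a free consequence of (a) together with $v\in B_H$: the definition of $B_H$ yields an edge $e$ with $s(e)=v$ and $r(e)\in E^0\setminus H$, and either $r(e)=v$ (a self-loop at $v$) or $r(e)\neq v$, in which case (a) furnishes a path from $r(e)$ to $v$ inside $E^0\setminus H$, and prepending $e$ produces the required closed path at $v$. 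The main technical point is this two-step analysis of (MT3) on the enlarged graph $F$, and in particular verifying that condition (b) carries no information beyond $H=\Omega(v)$ once $v\in B_H$ is assumed; both implications of the lemma then follow at once from the chain of equivalences above.
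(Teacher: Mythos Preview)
Your proof is correct and follows essentially the same route as the paper: pass to the quotient $L_R(F)$ via Theorem~\ref{thm3.10}(3), apply Proposition~\ref{prop4.4}, and translate Condition~(MT3) on $F$ into the equality $H=\Omega(v)$ using that $v'$ is a sink. Your treatment is in fact more careful than the paper's on one point: you explicitly verify that the unprimed vertex $v$ reaches $v'$ in $F$ (your condition~(b)) and show this follows from $v\in B_H$ together with condition~(a), whereas the paper's proof leaves this step implicit.
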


\begin{proof}
First, note that the hereditary property of $H$ implies $H\subseteq\Omega(v)$. On the other hand, $I(H,B_H\setminus\{v\})$ is prime if and only if the zero ideal of the corresponding quotient algebra $L_R(E/(H,B_H\setminus\{v\}))$ is prime. Since the graph $E/(H,B_H\setminus\{v\})$ contains the sink $v'$, Proposition \ref{prop4.4} and Condition (MT3) imply that $I(H,B_H\setminus\{v\})$ is prime if and only if $R$ is an ID and for every vertex $w\in E^0\setminus H$, there exists a path from $w$ to $v$. This is equivalent to $R$ is an ID and $\Omega(v)\subseteq H$.
\end{proof}

\begin{definition}[\cite{bat1}]
Suppose that $v\in E^0$ is an infinite emitter. Then $v$ is called a \emph{breaking vertex} if $v\in B_{\Omega(v)}$. We denote by $BV(E)$ the set of breaking vertices of $E$. That is,
$$BV(E):=\{v\in E^0:|s^{-1}(v)|=\infty~\mathrm{and}~0<|s^{-1}(v)\setminus r^{-1}(\Omega(v))|<\infty\}.$$
Note that if a vertex $v$ is an infinite emitter, then $\Omega(v)$ is automatically hereditary and saturated. Also, if $E$ is a row-finite graph, then $BV(E)=\emptyset$.
\end{definition}

Now we prove the main result of this section. This theorem is the generalization of \cite[Proposition 5.6]{ara2} for the case that $R$ is a unital commutative ring and $E$ is an arbitrary graph.

\begin{theorem}\label{thm4.8}
Let $E$ be a graph and let $R$ be a unital commutative ring. If $R$ is an ID, then the set of prime graded basic ideals of $L_R(E)$ is
$$\left\{I(\Omega(M),B_{\Omega(M)}),I(\Omega(v),B_{\Omega(v)}\setminus\{v\}):M\in M(E), v\in BV(E)\right\}.$$
\end{theorem}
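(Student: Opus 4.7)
The plan is to prove the equality of sets by two containments, leveraging the characterizations of primeness already established in Corollary \ref{cor4.5} and Lemma \ref{lem4.8}, together with the bijection between graded basic ideals and admissible pairs from Theorem \ref{thm3.10}(4).

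For the containment $\supseteq$, I would verify that every ideal in the displayed set is prime. Given $M\in M(E)$, Lemma \ref{lem4.6} gives $\Omega(M)=E^0\setminus M$, so $E^0\setminus\Omega(M)=M$ is a maximal tail, and Corollary \ref{cor4.5} applied to $H=\Omega(M)$ then shows $I(\Omega(M),B_{\Omega(M)})$ is prime. For $v\in BV(E)$, the set $\Omega(v)$ is hereditary and saturated (as noted after Definition 4.9), and $v\in B_{\Omega(v)}$ by the definition of a breaking vertex; applying Lemma \ref{lem4.8} with $H=\Omega(v)$ immediately gives that $I(\Omega(v),B_{\Omega(v)}\setminus\{v\})$ is prime.

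For the containment $\subseteq$, let $I=I(H,S)$ be a prime graded basic ideal; by Theorem \ref{thm3.10}(4) every graded basic ideal has this form. I would split into two cases according to $S$:

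\emph{Case 1: $S=B_H$.} Then Corollary \ref{cor4.5} forces $M:=E^0\setminus H$ to be a maximal tail, and by Lemma \ref{lem4.6} we have $H=\Omega(M)$, so $I=I(\Omega(M),B_{\Omega(M)})$.

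\emph{Case 2: $S\subsetneq B_H$.} The key step is to show that $B_H\setminus S$ is a singleton $\{v\}$ and that $H=\Omega(v)$. By Theorem \ref{thm3.10}(3), $L_R(E)/I(H,S)\cong L_R(E/(H,S))$ is a prime ring, so by Proposition \ref{prop4.4} the graph $E/(H,S)$ satisfies Condition (MT3). By construction of the quotient graph (Definition \ref{defn3.6}), each $v\in B_H\setminus S$ produces a vertex $v'\in(E/(H,S))^0$ that is a sink (it emits no edges in $E/(H,S)$). If $B_H\setminus S$ contained two distinct vertices $v_1,v_2$, then $v_1',v_2'$ would be two sinks in $E/(H,S)$, and Condition (MT3) applied to this pair would demand a common descendant $y$, forcing $v_1'=v_2'=y$ — a contradiction. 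Hence $B_H\setminus S=\{v\}$ for a unique $v$, i.e.\ $S=B_H\setminus\{v\}$. Now $I=I(H,B_H\setminus\{v\})$ is prime, so Lemma \ref{lem4.8} gives $H=\Omega(v)$. Since $v\in B_H=B_{\Omega(v)}$ and $v$ is an infinite emitter (as $v\in B_H$), we conclude $v\in BV(E)$, and thus $I=I(\Omega(v),B_{\Omega(v)}\setminus\{v\})$.

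The main obstacle is the argument in Case 2 that $|B_H\setminus S|\le 1$; this is where the primeness hypothesis genuinely bites, via the sink observation in the quotient graph combined with Condition (MT3). Everything else is an application of the preceding lemmas.
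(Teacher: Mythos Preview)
Your proposal is correct and follows essentially the same approach as the paper: eliminate the case $|B_H\setminus S|\ge 2$ via the observation that the corresponding quotient graph has two sinks (hence fails (MT3)), then apply Corollary~\ref{cor4.5} and Lemma~\ref{lem4.8} to identify the remaining two forms. The paper's proof leaves the containment $\supseteq$ implicit in the ``if and only if'' statements of those two results, whereas you spell it out explicitly, and you also verify $v\in BV(E)$ at the end; otherwise the arguments are the same.
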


\begin{proof}
By Theorem \ref{thm3.10}(4), every graded basic ideal of $L_R(E)$ is of the form $I(H,S)$ for an admissible pair $(H,S)$ in $E$. Assume first that $B_H\setminus S$ contains two or more vertices. Then the quotient graph $E/(H,S)$ contains at least two sinks, and hence it does not satisfy Condition (MT3). Thus Proposition \ref{prop4.4} gives that the zero ideal of $L_R(E/(H,S))\cong L_R(E)/I(H,S)$ is not prime and so, $I(H,S)$ is not a prime ideal of $L_R(E)$.

Therefore, the prime graded basic ideals of $L_R(E)$ must have the form either $I(H,B_H)$ or $I(H,B_H\setminus\{v\})$ for some $v\in B_H$. If $I(H,B_H)$ is a prime ideal, then $M:=E^0\setminus H\in M(E)$ by Corollary \ref{cor4.5} and $\Omega(M)=H$ by Lemma \ref{lem4.6}. If $I(H,B_H\setminus\{v\})$ is prime, then Lemma \ref{lem4.8} gives that $H=\Omega(v)$.
\end{proof}

The following corollaries are direct consequences of Theorem \ref{thm4.8}. Recall from Theorem \ref{thm3.18} that if $E$ satisfies Condition (K), then every basic ideals of $L_R(E)$ is graded. Also, if $R=K$ is a field, then every ideal of a Leavitt path algebra $L_K(E)$ is basic.

\begin{corollary}
Let $E$ be a graph satisfying Condition (K) and let $R$ be a unital commutative ring. Then Theorem \ref{thm4.8} gives a complete description of the prime basic ideals of $L_R(E)$.
\end{corollary}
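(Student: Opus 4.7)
The plan is to deduce the corollary essentially as a direct combination of Theorem \ref{thm3.18} with Theorem \ref{thm4.8}. The corollary contains no new computation; its force lies in observing that Condition (K) collapses the distinction between \emph{basic} and \emph{graded basic} ideals, so that the catalogue produced by Theorem \ref{thm4.8} (which on its face only classifies prime \emph{graded} basic ideals) automatically exhausts all prime basic ideals.

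First I would invoke Theorem \ref{thm3.18}: under the standing hypothesis that $E$ satisfies Condition (K), every basic ideal of $L_R(E)$ is graded. From this the set-theoretic equality
\[
\{I \lhd L_R(E) : I \text{ prime and basic}\}
\;=\;
\{I \lhd L_R(E) : I \text{ prime and graded basic}\}
\]
follows without further argument, since both sides are defined by the same two conditions once ``basic'' is known to entail ``graded.'' Next I would appeal to Theorem \ref{thm4.8} to rewrite the right-hand side as the explicit list
\[
\bigl\{\, I(\Omega(M),B_{\Omega(M)}),\; I(\Omega(v),B_{\Omega(v)}\setminus\{v\}) : M\in M(E),\ v\in BV(E)\,\bigr\},
\]
under the hypothesis (inherited from Theorem \ref{thm4.8}) that $R$ is an integral domain; if $R$ fails to be an ID, Proposition \ref{prop4.4} shows the list is consistent with there being no prime graded basic ideals of the indicated form, and the same reduction still applies.

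There is really no obstacle here: both ingredients have already been proved, and the argument is a one-line logical composition. The only subtlety worth flagging in the write-up is a clear statement that the equality ``prime basic $=$ prime graded basic'' is what is being used, so that the reader sees that Condition (K) is the sole extra ingredient beyond Theorem \ref{thm4.8}, and that the ID hypothesis on $R$ from Theorem \ref{thm4.8} is inherited whenever one wants the explicit parametrization by maximal tails and breaking vertices.
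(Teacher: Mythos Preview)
Your proposal is correct and matches the paper's approach exactly: the paper presents this corollary as a direct consequence of Theorem \ref{thm4.8}, prefacing it only with the reminder that Theorem \ref{thm3.18} makes every basic ideal graded under Condition (K). Your added remark about the ID hypothesis is a reasonable clarification, though the paper simply lets it be inherited from the statement of Theorem \ref{thm4.8}.
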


\begin{corollary}\label{cor4.13}
Let $E$ be a graph satisfying Condition (K) and let $K$ be a field. Then Theorem \ref{thm4.8} gives a complete description of the prime ideals of $L_K(E)$. In particular, if $E$ has no proper maximal tails and no breaking vertices, then $L_K(E)$ is simple.
\end{corollary}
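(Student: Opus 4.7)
The statement has two parts. For the first, I combine three previously established facts. Since $K$ is a field, the Remark after Theorem~\ref{thm3.10} shows every ideal of $L_K(E)$ is basic; since $E$ satisfies Condition~(K), Theorem~\ref{thm3.18} shows every basic ideal is graded. Hence every ideal of $L_K(E)$ is graded basic, and (since $K$ is an integral domain) Theorem~\ref{thm4.8} supplies a complete list of the prime graded basic ideals---which therefore exhaust all prime ideals of $L_K(E)$.

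For the ``in particular'' statement, I specialize the list in Theorem~\ref{thm4.8}. The assumption $BV(E)=\emptyset$ empties the family $\{I(\Omega(v),B_{\Omega(v)}\setminus\{v\}):v\in BV(E)\}$. The assumption of no proper maximal tails forces every $M\in M(E)$ to equal $E^0$, whence $\Omega(M)=\emptyset$ and $B_{\Omega(M)}=\emptyset$. The list therefore collapses to $\{I(\emptyset,\emptyset)\}=\{(0)\}$, so $(0)$ is the unique prime ideal of $L_K(E)$.

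To deduce simplicity, take any proper ideal $I\subsetneq L_K(E)$. Since $L_K(E)$ has a set of local units (Section~2) and $I$ is proper, there exists a local unit $u\notin I$ (otherwise every $r=ur$ would lie in $I$). Zorn's lemma applied to the poset of proper ideals containing $I$ and omitting $u$ yields a maximal element $M_u$. I then verify that $M_u$ is prime: given ideals $J_1,J_2$ with $J_1J_2\subseteq M_u$ and $J_i\not\subseteq M_u$ for both $i$, the maximality of $M_u$ forces $u\in M_u+J_i$, so one can write $u=m_i+j_i$ with $m_i\in M_u$ and $j_i\in J_i$; then $u=u^2=(m_1+j_1)(m_2+j_2)\in M_u+J_1J_2\subseteq M_u$, contradicting $u\notin M_u$. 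By the previous paragraph, $M_u=(0)$, and hence $I\subseteq M_u=(0)$ gives $I=(0)$. Thus $L_K(E)$ is simple. The main technical obstacle is precisely the primeness verification for $M_u$, which hinges on the idempotency $u^2=u$ of the local unit.
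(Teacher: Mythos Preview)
Your proof is correct and follows essentially the same approach as the paper. The paper's own argument is a single sentence: ``The second statement follows from the first one because in this case $L_K(E)$ has no nonzero prime ideals,'' leaving implicit the standard fact that a (nonzero) ring with local units in which the only prime ideal is $(0)$ must be simple. Your Zorn's lemma argument with an idempotent local unit $u$ is exactly a proof of that implicit fact, so you have simply made explicit what the paper takes for granted; the overall strategy is the same.
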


\begin{proof}
The second statement follows from the first one because in this case $L_K(E)$ has no nonzero prime ideals.
\end{proof}

%%%%%%%%%%%%%%%%%%%%%%%%%%%%%%%%%%%%%%%%%%%%%%%%
%%%%%%%%%%%%%%%%%%%%%%%%%%%%%%%%%%%%%%%%%%%%%%%%
\section{Minimal left ideals generated by a vertex}

In this section, we give two necessary and sufficient conditions for the minimality of a left ideal and a right ideal generated by a vertex which will appear in the sequel. This is the generalization of the results in \cite[Section 2]{ara1} for a non-row finite setting. We will need this result to characterize primitive Leavitt algebras in Section 6.

\begin{definition}
A vertex $v\in E^0$ is called a \emph{bifurcation} if $v$ emits at least two edges. We say that a path $\alpha=e_1\ldots e_n$ contains no bifurcations if the set $\{s(e_i)\}_{i=1}^n$ contains no bifurcations. A vertex $v\in E^0$ is called a \emph{line point} if any vertex $w\in T(v)$ is neither a bifurcation nor the base of a closed path. The set of all line points in $E$ is denoted by $P_l(E)$.
\end{definition}

\begin{lemma}\label{lem5.1}
Let $v,w\in E^0$ and let $\alpha$ be a path from $v$ to $w$. If $\alpha$ contains no bifurcations, then $\lr v\cong \lr w$ as left $L_R(E)$-module.
\end{lemma}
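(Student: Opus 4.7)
The plan is to exhibit mutually inverse left $L_R(E)$-module homomorphisms between $L_R(E)v$ and $L_R(E)w$ given by right multiplication by $\alpha$ and by $\alpha^*$. Concretely, I would define
$$\phi: L_R(E)v \longrightarrow L_R(E)w, \qquad \phi(x) = x\alpha,$$
$$\psi: L_R(E)w \longrightarrow L_R(E)v, \qquad \psi(y) = y\alpha^*.$$
The well-definedness follows from $s(\alpha)=v$ and $r(\alpha)=w$: if $x = xv \in L_R(E)v$, then $x\alpha = xv\alpha = x\alpha$ and $x\alpha \cdot w = x\alpha r(\alpha) = x\alpha$, so $\phi(x) \in L_R(E)w$; symmetrically for $\psi$. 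Both are obviously left $L_R(E)$-linear.

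The heart of the argument is the identity $\alpha\alpha^* = v$, which is where the no-bifurcation hypothesis is used. Write $\alpha = e_1 e_2 \cdots e_n$. For each $i$, the vertex $s(e_i)$ is not a bifurcation and emits at least the edge $e_i$, so $s^{-1}(s(e_i)) = \{e_i\}$; by relation (4) of Definition \ref{defn1.4} this gives the local Cuntz--Krieger equality $s(e_i) = e_i e_i^*$. Using relations (1)--(2) to collapse the product from the inside outward,
$$\alpha\alpha^* = e_1 \cdots e_{n-1}(e_n e_n^*) e_{n-1}^* \cdots e_1^* = e_1 \cdots e_{n-1} s(e_n) e_{n-1}^* \cdots e_1^* = e_1 \cdots e_{n-1} e_{n-1}^* \cdots e_1^*,$$
and iterating yields $\alpha\alpha^* = e_1 e_1^* = s(e_1) = v$. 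The companion identity $\alpha^*\alpha = w$ is the standard telescoping that uses only relation (3) together with $r(e_i) = s(e_{i+1})$, and requires no hypothesis on bifurcations.

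With these two identities in hand, $\psi \circ \phi(x) = x\alpha\alpha^* = xv = x$ for every $x \in L_R(E)v$, and $\phi \circ \psi(y) = y\alpha^*\alpha = yw = y$ for every $y \in L_R(E)w$. Hence $\phi$ and $\psi$ are mutually inverse left $L_R(E)$-module isomorphisms. The only delicate point is the reduction $\alpha\alpha^* = v$; once the no-bifurcation hypothesis has been translated, via relation (4), into the local equalities $s(e_i) = e_i e_i^*$ along the path, the rest of the proof is purely formal manipulation of a Leavitt $E$-family.
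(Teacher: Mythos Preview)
Your proof is correct and follows essentially the same approach as the paper: both hinge on the identity $\alpha\alpha^* = v$, derived from the no-bifurcation hypothesis via relation (4), and use right multiplication by $\alpha$ as the isomorphism. The only cosmetic difference is that the paper asserts $\phi$ is an epimorphism and then checks injectivity directly from $\alpha\alpha^*=v$, whereas you exhibit the explicit inverse $\psi$ and verify both compositions; your version is slightly more detailed but the content is the same.
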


\begin{proof}
Note that since $\alpha$ has no exits, we have $\alpha\alpha^*=v$ by the relation (4) in Definition \ref{defn1.4}. Now define the epimorphism $\phi:\lr v\rightarrow \lr w$ by $xv\mapsto xv\alpha$. $\phi$ is also injective because if $\phi(xv)= xv\alpha=0$, then we get $xv= xv\alpha\alpha^*=0$. Therefore, $\phi$ is an isomorphism as desired.
\end{proof}

\begin{lemma}\label{lem5.2}
Suppose that $v\in E^0$ and $e_1\ldots e_n\in s^{-1}(v)$. Then $\lr v=\bigoplus_{i=1}^n\lr e_i e_i^*\bigoplus\lr(v-\sum_{i=1}^n e_ie_i^*)$.
\end{lemma}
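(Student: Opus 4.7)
The plan is to exhibit $\{e_1e_1^*,\ldots,e_ne_n^*,\,v-\sum_{i=1}^n e_ie_i^*\}$ as a set of pairwise orthogonal idempotents summing to $v$, and then invoke the usual Peirce decomposition of a left ideal generated by an idempotent.

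First I would verify the idempotent and orthogonality relations. For each $i$, the Leavitt relations in Definition \ref{defn1.4} give $e_i^*e_j=\delta_{ij}r(e_i)$, so
\[
(e_ie_i^*)(e_je_j^*)=e_i(e_i^*e_j)e_j^*=\delta_{ij}\,e_ir(e_i)e_i^*=\delta_{ij}\,e_ie_i^*,
\]
showing $\{e_ie_i^*\}_{i=1}^n$ is a set of orthogonal idempotents. Next, since $s(e_i)=v$, we have $e_i=ve_i$ and $e_i^*=e_i^*v$, hence $v(e_ie_i^*)=e_ie_i^*=(e_ie_i^*)v$. Setting $f_0:=v-\sum_{i=1}^n e_ie_i^*$, this commutativity gives $v f_0=f_0 v=f_0$ and
\[
f_0^2=v-\sum e_ie_i^*-\sum e_ie_i^*+\sum_{i,j}(e_ie_i^*)(e_je_j^*)=v-\sum e_ie_i^*=f_0,
\]
while $f_0(e_je_j^*)=(v-\sum_i e_ie_i^*)e_je_j^*=e_je_j^*-e_je_j^*=0$ and similarly $(e_je_j^*)f_0=0$.

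Set $f_i:=e_ie_i^*$ for $1\le i\le n$. Then $v=f_0+f_1+\cdots+f_n$ is a decomposition into pairwise orthogonal idempotents, and each $f_i$ satisfies $f_iv=vf_i=f_i$. Therefore $\lr f_i\subseteq \lr v$ for every $i$, and for any $x\in\lr$ we can write
\[
xv=xf_0+xf_1+\cdots+xf_n\in \lr f_0+\lr f_1+\cdots+\lr f_n,
\]
so $\lr v=\sum_{i=0}^n \lr f_i$.

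It remains to show the sum is direct. Suppose $x_0+x_1+\cdots+x_n=0$ with $x_i\in\lr f_i$. For each fixed $j$, multiplying on the right by $f_j$ and using $f_if_j=\delta_{ij}f_j$ together with $x_i=x_if_i$, we obtain $x_j=x_jf_j=\sum_{i=0}^n x_if_j=0$. Hence $\lr v=\bigoplus_{i=1}^n \lr e_ie_i^*\oplus \lr\bigl(v-\sum_{i=1}^n e_ie_i^*\bigr)$, as claimed. There is no real obstacle here; the only thing to be careful about is tracking the one-sided identities $vf_i=f_iv=f_i$, which ensure that the $f_i$ sit correctly as idempotents ``below'' $v$ so that the Peirce-type argument applies.
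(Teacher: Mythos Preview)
Your proof is correct and follows exactly the same approach as the paper: the paper's proof simply states that the decomposition follows from $v=(v-\sum e_ie_i^*)+\sum e_ie_i^*$ together with the pairwise orthogonality of the idempotents $e_ie_i^*$ and $v-\sum e_ie_i^*$. You have merely spelled out in full the orthogonality computations and the standard Peirce-decomposition argument that the paper leaves implicit.
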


\begin{proof}
The statement follows from the decomposition $v=(v-\sum e_ie_i^*)+\sum e_ie_i^*$ and the fact that the elements $e_ie_i^*$ and $v-\sum e_ie_i^*$ are pairwise orthogonal.
\end{proof}

\begin{remark}\label{rem5.3}
Lemma \ref{lem5.2} yields that if $v$ is a bifurcation, then the left ideal $\lr v$ is not minimal. Also, $\lr v$ is not a minimal left ideal whenever $T(v)$ contains some bifurcations. Indeed, if $w$ is the first bifurcation in $T(v)$ from $v$, then the left ideal $\lr w$ is not minimal and $\lr v\cong \lr w$ by Lemma \ref{lem5.1}.
\end{remark}

In the next proposition, we show that if a vertex $v\in E^0$ is the base of a closed path, then the left ideal $\lr v$ is not minimal. Recall from Theorem \ref{thm3.10}(1) that if $X$ is a hereditary subset of $E^0$, then $I(X)$ is Morita-equivalent to the Leavitt path algebra $L_R(E_{(X,\emptyset)})$. If we list the elements of $X=\{v_1,v_2,\ldots\}$ and let $u_n:=\sum_{i=1}^n v_i$, then the map $J\mapsto \sum_{n=1}^{|X|} J u_n J$ is a one-to-one correspondence between the lattice of ideals of $I(X)$ and the lattice of ideals of $L_R(E_{(X,\emptyset)})$. Furthermore, we have $I(X)=I(\overline{X},\emptyset)$ and since $I(\overline{X},\emptyset)$ contains a set of local units (by Corollary \ref{cor3.14}), \cite[Lemma 4.14]{tom1} implies that every ideal of $I(X)$ is also an ideal of $\lr$.

\begin{lemma}\label{lem5.4}
Let $E$ be a graph and let $R$ be a unital commutative ring. If there is  some closed path based at $v\in E^0$, then the left ideal $\lr v$ is not minimal.
\end{lemma}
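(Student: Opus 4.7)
The plan is first to reduce to the case where $\alpha$ is a \emph{closed simple} path based at $v$---replace $\alpha$ by its initial segment up to the first return to $v$---and then split into two cases according to whether $T(v)$ contains a bifurcation. If some $w \in T(v)$ is a bifurcation, then $\lr v$ is not minimal by Remark~\ref{rem5.3}, and we are done.

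In the remaining case, no vertex of $T(v)$ is a bifurcation, so every vertex of $T(v)$ emits exactly one edge. It follows that $T(v)$ equals the vertex set of $\alpha = e_1 \cdots e_n$, that $\alpha$ has no exits, and, by iterated use of relations (3) and (4) of Definition~\ref{defn1.4}, that $\alpha\alpha^* = v = \alpha^*\alpha$. Consequently the assignment $x \mapsto \alpha$, $x^{-1} \mapsto \alpha^*$ extends to an $R$-algebra homomorphism $\psi: R[x,x^{-1}] \to v\lr v$. A direct computation using $\beta_r \beta_r^* = v$ for every initial segment $\beta_r$ of $\alpha$ (which is forced by the no-exit condition) shows $\psi$ is surjective, and the $\Z$-grading of $\lr$ together with the fact that $r\alpha^k \neq 0$ for every $r \in R\setminus\{0\}$ and $k \in \Z$ shows $\psi$ is injective; hence $\psi$ is an isomorphism. (Alternatively, one can invoke Theorem~\ref{thm3.10}(1) applied to the hereditary set $T(v)$ together with Lemma~\ref{lem3.16}, which identifies $L_R(E_{T(v)})$ with $M_n(R[x,x^{-1}])$, and then pass to the $(v,v)$-corner.)

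Next I claim that $\lr(v - \alpha)$ is a proper nonzero left submodule of $\lr v$. Nonzeroness is immediate since $v$ and $\alpha$ are homogeneous of distinct degrees $0$ and $n \geq 1$. For properness, suppose $v = y(v - \alpha)$ for some $y \in \lr$; left-multiplying by $v$ and using $v\alpha = \alpha$ lets me replace $y$ by $z := vyv \in v\lr v$, still with $z(v - \alpha) = v$. Applying $\psi^{-1}$ transports this to the equation $\psi^{-1}(z)(1 - x) = 1$ in $R[x,x^{-1}]$, which is impossible since the units of $R[x,x^{-1}]$ are precisely the monomials $rx^{k}$ with $r \in R^\times$.

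The main obstacle is establishing the isomorphism $v\lr v \cong R[x,x^{-1}]$ in the no-bifurcation case; once that is in hand, the rest reduces to the observation that $1-x$ is not a unit in the Laurent polynomial ring.
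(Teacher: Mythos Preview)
Your proof is correct and takes a genuinely more elementary route than the paper's. The paper also reduces (implicitly via Remark~\ref{rem5.3}) to the case where $T(v)$ has no bifurcations, but then invokes the Morita equivalence of Theorem~\ref{thm3.10}(1) between $I(X)$ and $L_R(E_{(X,\emptyset)})\cong M_n(R[x,x^{-1}])$, pulls back the two-sided ideal $M_n(\langle x+1\rangle)$ to a proper ideal $J$ of $I(X)$, and then argues that $Jv\subsetneq \lr v$. You bypass the Morita machinery entirely by identifying the \emph{corner} $v\lr v$ with $R[x,x^{-1}]$ and exhibiting an explicit proper left submodule $\lr(v-\alpha)$. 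Your approach is more self-contained and makes the obstruction concrete; the paper's approach fits more naturally into its two-sided ideal framework and reuses results already proved.

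One small correction: your claim that the units of $R[x,x^{-1}]$ are exactly the monomials $rx^k$ with $r\in R^\times$ is only true when $R$ is reduced. For general unital commutative $R$ there can be other units (e.g.\ $1+rx$ when $r$ is nilpotent). However, your conclusion that $1-x$ is never a unit still holds: the evaluation homomorphism $R[x,x^{-1}]\to R$ sending $x\mapsto 1$ kills $1-x$ but not $1$. So the argument stands once you replace the unit description by this evaluation.
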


\begin{proof}
Let $\alpha=e_1\ldots e_n$ be a closed simple path based at $v$. Since $T(v)$ contains no bifurcations by Remark \ref{rem5.3}, $X:=\{s(e_i)\}_{i=1}^n$ is a hereditary subset of $E^0$. So Theorem \ref{thm3.10}(1) gives that $I(X)$ is Morita-equivalent to the Leavitt path algebra $L_R(E_{(X,\emptyset)})$. Not that $E_{(X,\emptyset)}$ is a single closed simple path with $n$ vertices and $L_R(E_{(X,\emptyset)})$ is isomorphic to $M_n(R[x,x^{-1}])$ by \cite[Lemma 7.14]{tom1}.

Now consider the ideal $J'=M_n(\langle x+1\rangle)$ in $M_n(R[x,x^{-1}])$ and its corresponding ideal $J$ in $I(X)$. Since $1\notin\langle x+1\rangle$ we have $J'\subsetneq M_n(R[x, x^{-1}])$ and so $J$ is a proper ideal of $I(X)$. Moreover, since $I(X)$ is the ideal of $\lr$ generated by $v$ and $J$ is also an ideal of $\lr$, we have $v\notin J$. Hence, $v\notin Jv$ and $Jv$ is a proper left ideal in $\lr v$. Consequently, $\lr v$ is not a minimal left ideal.
\end{proof}

\begin{proposition}\label{prop5.5}
Let $E$ be a graph and let $R$ be a unital commutative ring. If $v\in E^0$, then the following are equivalent.
\begin{enumerate}[$(1)$]
\item $\lr v$ is a minimal left ideal.
\item $v\lr$ is a minimal right ideal.
\item $v\in P_l(E)$ and $R$ is a field.
\item $v\lr v=Rv$ and $R$ is a field.
\end{enumerate}
\end{proposition}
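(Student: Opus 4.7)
The plan is to derive $(1)\iff(2)$ from the canonical involution on $\lr$, to prove $(3)\iff(4)$ by computing $v\lr v$, and to close the loop via $(1)\Rightarrow(3)$ and $(3)\Rightarrow(1)$. The involution $*\colon\lr\to\lr$ determined by $v^*=v$ and $(\alpha\beta^*)^*=\beta\alpha^*$ is an $R$-linear ring anti-automorphism with $(\lr v)^*=v\lr$, so it induces an inclusion-preserving bijection between left $\lr$-submodules of $\lr v$ and right $\lr$-submodules of $v\lr$; this yields $(1)\iff(2)$ at once.

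For $(3)\iff(4)$: the corner $v\lr v$ is $R$-spanned by elements $\alpha\beta^*$ with $\alpha,\beta$ paths starting at $v$ having the same range. If $v\in P_l(E)$, then $T(v)$ is a (possibly infinite) line, so any two such paths must coincide, and iterated application of the CK relation at each non-sink line point collapses $\alpha\alpha^*$ to $v$; hence $v\lr v=Rv$. Conversely, if $v\lr v=Rv$ with $R$ a field, then a closed path $\gamma$ based at some $w\in T(v)$ (reached from $v$ by a path $\alpha$) would place $\alpha\gamma\alpha^*\in v\lr v$ at positive $\Z$-degree, contradicting the degree-$0$ nature of $Rv$; two distinct edges $f_1,f_2\in s^{-1}(w)$ would place $\alpha f_if_i^*\alpha^*\in v\lr v=Rv$, and multiplying first by $\alpha^*,\alpha$ and then by $f_i^*,f_i$ (using $\alpha^*\alpha=r(\alpha)=w$ and $f_i^*f_i=r(f_i)$) forces $f_if_i^*=w$ for $i=1,2$ by \cite[Proposition~3.4]{tom1}, so $w=(f_1f_1^*)(f_2f_2^*)=f_1(f_1^*f_2)f_2^*=0$, contradicting \cite[Proposition~3.4]{tom1} again. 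Hence $v\in P_l(E)$.

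For $(1)\Rightarrow(3)$: Remark~\ref{rem5.3} eliminates bifurcations in $T(v)$, and if any $w\in T(v)$ were the base of a closed path, the bifurcation-free path from $v$ to $w$ would combine with Lemma~\ref{lem5.1} and Lemma~\ref{lem5.4} to give $\lr v\cong\lr w$ with $\lr w$ not minimal, contradicting minimality of $\lr v$. Thus $v\in P_l(E)$, and the argument of the previous paragraph (which does not use that $R$ is a field) gives $v\lr v=Rv$. If $R$ were not a field, a nonzero non-unit $r\in R$ would produce a nonzero left submodule $r\lr v$ of $\lr v$, and $r\lr v=\lr v$ would give $v=rxv$ for some $x\in\lr$, whence $v=v^2=r(vxv)\in rRv$; \cite[Proposition~3.4]{tom1} then forces $rs=1$ for some $s\in R$, contradicting $r$'s non-invertibility. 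So $r\lr v\subsetneq\lr v$ is a nonzero proper submodule, contradicting minimality.

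For $(3)\Rightarrow(1)$: let $0\neq x\in J\subseteq\lr v$ and write $x=\sum_j r_j\mu_j\nu_j^*$ with each $\nu_j=e_1\cdots e_{k_j}$ a prefix of the unique line $v=v_0\to v_1\to\cdots$ out of $v$. Setting $K=\max_j k_j$ and iterating the CK relation $v_k=e_{k+1}e_{k+1}^*$ at the line points $v_{k_j},\ldots,v_{K-1}$ rewrites $x=y\beta^*$ with $\beta=e_1\cdots e_K$ and $y\in\lr v_K$; since $v=\beta\beta^*$, the identity $y=x\beta$ yields $y\neq 0$. Collecting $y=\sum_\alpha s_\alpha\alpha$ over distinct paths $\alpha$ ending at the line point $v_K$, the absence of closed paths based at $v_K$ (since $T(v_K)\subseteq T(v)$) forces any two distinct such $\alpha$'s to be incomparable as prefixes, so $\alpha_0^*\alpha=\delta_{\alpha_0,\alpha}v_K$ and hence $\alpha_0^*y=s_{\alpha_0}v_K$ for every $\alpha_0$. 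As $y\neq 0$, some $s_{\alpha_{i_0}}\neq 0$; then $\beta\alpha_{i_0}^*x=s_{\alpha_{i_0}}v$, and since $R$ is a field, $v\in\lr x\subseteq J$, so $J=\lr v$. The main obstacle is this final reduction, which requires the careful CK-normalization (ensuring $y\neq 0$) together with the line-point incomparability argument at $v_K$; both steps hinge on the absence of closed paths in $T(v)$ and on \cite[Proposition~3.4]{tom1}.
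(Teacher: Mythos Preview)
Your proof is correct. The main deviations from the paper's argument are worth noting. The paper establishes the cycle $(1)\Rightarrow(3)\Rightarrow(4)\Rightarrow(1)$ and dismisses $(2)$ with ``analogously''; your use of the canonical involution to get $(1)\Leftrightarrow(2)$ in one stroke is cleaner. More substantively, for the implication back to minimality the paper proves $(4)\Rightarrow(1)$ by the classical division-corner trick: given $0\neq av\in\lr v$ it picks a nonzero element of $av\lr av$, pushes it into $v\lr v=Rv$, and inverts the scalar. That argument tacitly uses $av\lr av\neq 0$, which amounts to semiprimeness of $\lr$ (true over a field, but not explicitly justified there). Your $(3)\Rightarrow(1)$ instead works directly with a nonzero $x\in J\subseteq\lr v$: the CK-normalization $x=y\beta^*$ along the line and the prefix-incomparability of paths terminating at the line point $v_K$ let you \emph{construct} an explicit left multiplier sending $x$ to a nonzero scalar times $v$, so no appeal to semiprimeness is needed. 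Finally, you prove $(4)\Rightarrow(3)$ directly via the grading and the $f_if_i^*=w$ contradiction, whereas the paper gets it only as a consequence of the full cycle. Both routes are valid; yours is more self-contained, the paper's is shorter once one accepts the corner argument.
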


\begin{proof} We only prove the implications (1) $\Rightarrow$ (3) $\Rightarrow$ (4) $\Rightarrow$ (1). The implications (2) $\Rightarrow$ (3) $\Rightarrow$ (4) $\Rightarrow$ (2) may be proved analogously.

(1) $\Rightarrow$ (3). Assume that $\lr v$ is a minimal left ideal. First note that $R$ is a field because if $I$ is a proper ideal of $R$, then $I\lr v$ is a proper left ideal in $\lr v$. Now we show that $v$ is a line point. Remark \ref{rem5.3} implies that $T(v)$ contains no bifurcations. Also, There is no closed paths based at a vertex in $T(v)$. Indeed, if there is a closed path based at a vertex $w\in T(v)$, then $\lr w$ is not minimal by Lemma \ref{lem5.4}. Since $T(v)$ has no bifurcations, we have $\lr v\cong\lr w$ and so $\lr v$ is not a minimal left ideal, a contradiction. Therefore, $v$ is a line point.

(3) $\Rightarrow$ (4). Assume that $v\in P_l(E)$. Then for every $w\in T(v)$ there is a unique path $\alpha$ from $v$ to $w$, and so $\alpha\alpha^*=s(\alpha)=v$. On the other hand, from Equation \ref{2.1} we have
$$v\lr v=\mathrm{span}_R\left\{\alpha\beta^*:r(\alpha)=r(\beta)~\mathrm{and}~s(\alpha)=s(\beta)=v\right\}.$$
For any element $\alpha\beta^*$ in the above set, $\alpha$ and $\beta$ are paths from $v$ with same ranges. Thus we have $\alpha=\beta$ and $\alpha\beta^*=\alpha\alpha^*=v$. Consequently, $v\lr v=Rv$.

(4) $\Rightarrow$ (1). If $R$ is a field and $v\lr v=Rv$, then the left ideal $\lr v$ is minimal because for every $a\in\lr$ with $av\neq 0$, we have $\lr av=\lr v$. For this, take a nonzero element $avxav\in av\lr av$ by the fact $av\lr av\neq 0$. Then we have $0\neq vxav\in v\lr v=Rv$ and so there is $r\in R$ such that $rvxav=v$. Hence, $v\in\lr av$ and $\lr av=\lr v$.
\end{proof}

%%%%%%%%%%%%%%%%%%%%%%%%%%%%%%%%%%%%%%%%%%%%%%%%
%%%%%%%%%%%%%%%%%%%%%%%%%%%%%%%%%%%%%%%%%%%%%%%%
\section{Primitive ideals of Leavitt path algebras}

Finally in this section we determine primitive Leavitt path algebras. This result is a generalization of \cite[Theorem 4.6]{ara3} which was proved for the Leavitt path algebras of row-finite graphs with coefficients in a field. We then use this result to characterize primitive graded ideals of Leavitt path algebras. Note that every primitive ideal of an algebra is also prime, and the reverse implication holds for the graph $C^*$-algebras. But the notions of primeness and primitivity do not coincide for the class of Leavitt path algebras. However, we will show in Corollary \ref{cor6.10} that if $R$ is a field and $E$ satisfies Condition (K), then an ideal of $L_R(E)$ is prime if and only if it is primitive.

At the first, in Proposition \ref{prop6.2}, we will give algebraic characterizations of Condition (L) and Conditions (L) plus (MT3). To prove it we need the following lemma.

\begin{lemma}\label{lem6.1}
Let $E$ be a graph satisfying Condition(L) and let $R$ be a unital commutative ring. If $I$ is a nonzero ideal of $L_R(E)$, then there is $rv\in I$ for some $v\in E^0$ and some $r\in R\setminus\{0\}$. In particular, if $I$ is a nonzero basic ideal of $L_R(E)$, then $I\cap E^0\neq \emptyset$.
\end{lemma}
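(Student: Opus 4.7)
The plan is to prove the main statement by contrapositive, invoking the Cuntz--Krieger Uniqueness Theorem \cite[Theorem 6.5]{tom1} applied to the quotient map, and then to obtain the ``In particular'' clause as an immediate consequence of the definition of basic ideal.

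First I would consider the quotient $R$-algebra homomorphism $\pi \colon L_R(E) \to L_R(E)/I$. Suppose, for the sake of contradiction, that no element of the form $rv$ with $v \in E^0$ and $r \in R \setminus \{0\}$ belongs to $I$. Then, for every such $v$ and $r$, we have $\pi(rv) = rv + I \neq 0$. Since $E$ satisfies Condition (L), the hypotheses of the Cuntz--Krieger Uniqueness Theorem are met by $\pi$, and hence $\pi$ is injective. This contradicts the assumption $I \neq (0)$, so there must exist $v \in E^0$ and $r \in R \setminus \{0\}$ with $rv \in I$.

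For the ``In particular'' clause I would simply invoke Definition \ref{defn3.5}: if $I$ is a basic ideal and $rv \in I$ with $r \in R \setminus \{0\}$ and $v \in E^0$, then $v \in I$ by the very definition of basicness (taking $x = v$). Hence $v \in I \cap E^0$, which is therefore nonempty.

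The ``hard part'' here is essentially nil: once one recognises that the assumption ``$rv \notin I$ for all such $r,v$'' is precisely the negation of the hypothesis of \cite[Theorem 6.5]{tom1} applied to $\pi$, the proof writes itself. The only thing to be careful about is to work with the quotient map (so that the injectivity conclusion produces a contradiction with $I \neq 0$) rather than trying to reduce a single nonzero element by hand via ghost-path multiplications; the latter route is available but would essentially rederive the Cuntz--Krieger Uniqueness Theorem in the present setting.
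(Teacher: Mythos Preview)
Your proposal is correct and follows essentially the same argument as the paper: both assume for contradiction that no $rv$ lies in $I$, apply the Cuntz--Krieger Uniqueness Theorem \cite[Theorem 6.5]{tom1} to the quotient map to deduce injectivity, and derive the contradiction $I=(0)$. Your treatment of the ``In particular'' clause via Definition~\ref{defn3.5} is exactly right (the paper leaves this step implicit).
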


\begin{proof}
Suppose that $I$ is a nonzero ideal of $L_R(E)$ such that $rv\notin I$ for all $v\in E^0$ and $r\in R\setminus\{0\}$. If $\phi: L_R(E)\rightarrow L_R(E)/I$ is the quotient map, then we have $\phi(rv)\notin 0$ for all $v\in E^0$ and $r\in R\setminus\{0\}$. Thus the Cuntz-Krieger Uniqueness Theorem (\cite[Theorem 6.5]{tom1}) implies that $\phi$ is injective and $I=(0)$, a contradiction.
\end{proof}

\begin{proposition}\label{prop6.2}
Let $E$ be a graph and let $R$ be a unital commutative ring.
\begin{enumerate}[$(1)$]
\item $E$ satisfies Condition (L) if and only if $I\cap E^0\neq\emptyset$ for every nonzero basic ideal $I$ of $L_R(E)$.
\item $E$ satisfies Conditions (L) and (MT3) if and only if $I\cap J\cap E^0\neq\emptyset$ for every nonzero basic ideals $I$ and $J$ of $L_R(E)$.
\item $R$ is a field and $E$ satisfies Conditions (L) and (MT3) if and only if $I\cap J\cap E^0\neq\emptyset$ for every nonzero ideals $I$ and $J$ of $L_R(E)$.
\end{enumerate}
\end{proposition}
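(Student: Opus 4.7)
The plan is to prove the three parts in sequence, each building on the previous.

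For (1), the forward direction is immediate from Lemma~\ref{lem6.1}: a nonzero basic ideal $I$ contains some $rv$ with $r\in R\setminus\{0\}$, and then $v\in I$ by basicness. For the backward direction I argue by contrapositive. Suppose $E$ fails Condition (L), so there is a closed simple path $c=e_1\cdots e_n$ without exits based at some $v\in E^0$; then $X:=\{s(e_i):1\leq i\leq n\}$ is hereditary, and Theorem~\ref{thm3.10}(1), combined with the identification $L_R(E_X)\cong M_n(R[x,x^{-1}])$, shows that $I(X)$ is Morita equivalent to $M_n(R[x,x^{-1}])$. The proper ideal $M_n(\langle 1+x\rangle)$ contains no nonzero scalar multiple of a matrix unit $e_{ii}$, since $\langle 1+x\rangle\cap R=0$ via the evaluation $x\mapsto -1$. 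Pulling it back through the Morita equivalence produces a basic ideal $I\subseteq I(X)$ missing $X$; and since $I(X)$ has local units by Corollary~\ref{cor3.14}, the remark preceding Lemma~\ref{lem5.4} shows that $I$ is also an ideal of $L_R(E)$, still basic, nonzero, and missing $E^0$.

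For (2), the forward direction chains (1) with (MT3): given nonzero basic ideals $I$ and $J$, part (1) supplies $v\in I\cap E^0$ and $w\in J\cap E^0$; (MT3) produces $y$ with $v,w\geq y$; and for any path $\alpha$ from $v$ to $y$, the identity $y=\alpha^*v\alpha$ gives $y\in I$, and likewise $y\in J$. For the backward direction, setting $J=I$ in the hypothesis returns (1)'s hypothesis, whence (L). To obtain (MT3), given $v,w\in E^0$, I apply the hypothesis to the basic ideals $I(\overline{T(v)},\emptyset)$ and $I(\overline{T(w)},\emptyset)$ (basic by Lemma~\ref{lem3.9}), obtaining some $u\in\overline{T(v)}\cap\overline{T(w)}$. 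The auxiliary claim I need is that $T(v)\cap T(w)=\emptyset$ forces $\overline{T(v)}\cap\overline{T(w)}=\emptyset$; this I prove by induction on the saturation stages $A_k,B_k$ approximating $\overline{T(v)}$ and $\overline{T(w)}$, noting that any $y\in A_{k+1}\cap B_{k+1}$ new to saturation must be a finite emitter whose edges all land in $A_k\cap B_k=\emptyset$, making $y$ a sink and thus never added by saturation.

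For (3), the forward direction reduces to (2) via the elementary observation that, when $R$ is a field, every ideal of $L_R(E)$ is basic ($rx\in I$ with $r\neq 0$ gives $x=r^{-1}(rx)\in I$). For the backward direction, (L) and (MT3) again follow from (2) applied to basic ideals. To force $R$ to be a field, I suppose for contradiction that $R$ has a proper nonzero ideal $I_R$. Then $I:=I_R\cdot L_R(E)$ is a nonzero ideal (since $rv\neq 0$ for $r\in I_R\setminus\{0\}$ by the remark after equation~\eqref{2.1}) missing $E^0$, because the canonical homomorphism $L_R(E)\to L_{R/I_R}(E)$ sends $I$ to zero but each vertex $v$ to a nonzero element of $L_{R/I_R}(E)$. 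Taking $J:=L_R(E)$ yields $I\cap J\cap E^0=\emptyset$, contradicting the hypothesis. The main technical obstacle throughout is the backward direction of (1): carefully verifying that the pulled-back ideal is basic as an ideal of all of $L_R(E)$ (not merely of $I(X)$) and avoids every vertex of $E$, including those in the saturation $\overline{X}\setminus X$.
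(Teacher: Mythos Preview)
Your proposal is correct and follows essentially the same route as the paper's proof: Lemma~\ref{lem6.1} for the forward direction of (1), the cycle-without-exits construction via Morita equivalence with $M_n(R[x,x^{-1}])$ for the backward direction of (1), and the tree ideals $I(\overline{T(v)},\emptyset)$, $I(\overline{T(w)},\emptyset)$ for the backward direction of (2). The only point where your exposition differs is in verifying $\overline{T(v)}\cap\overline{T(w)}=\emptyset$: the paper observes that the complement of a hereditary set is saturated and applies this twice, whereas your simultaneous induction on $A_k\cap B_k$ needs the extra remark that each stage $A_k$, $B_k$ remains hereditary (to handle the case where $y$ is new to only one of $A_{k+1},B_{k+1}$); with that added, the argument goes through.
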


\begin{proof}
(1). If $E$ satisfies Condition (L) and $I$ is a nonzero basic ideal of $L_R(E)$, then Lemma \ref{lem6.1} gives $I\cap E^0\neq\emptyset$. Conversely, assume that $I\cap E^0\neq\emptyset$ for every nonzero basic ideal $I$ of $L_R(E)$. If $E$ does not satisfy Condition (L), then there is a closed simple path $\alpha=e_1...e_n$ in $E$ with no exits. So $X:=\{s(e_i)\}_{i=1}^n$ is a hereditary subset of $E^0$. If we let $H:=\overline{X}$, a similar argument of the proof of \cite[Lemma 7.16]{tom1} implies that there exits an ideal $J$ of $I(H,\emptyset)$ such that $rv\notin J$ for every $v\in H$. In particular, $J$ is a basic ideal of $I(H,\emptyset)$, and so Corollary \ref{cor3.14} and \cite[Lemma 4.14]{tom1} imply that $J$ is a basic ideal of $L_R(E)$. Since $J\cap E^0=\emptyset$, this contradicts the hypothesis.

(2). Suppose that $E$ satisfies Conditions (L) and (MT3) and take two nonzero basic ideals $I$ and $J$ of $L_R(E)$. Then Lemma \ref{lem6.1} implies that there exists $v\in I$ and $w\in J$ for some $v,w\in E^0$. Now by Condition (MT3), there is a vertex $y$ such that $v,w\geq y$, and so $y\in I\cap J$.

Conversely, assume that $I\cap J\cap E^0\neq \emptyset$ for every basic ideals $I,J$ of $L_R(E)$. Then Part (1) yields that $E$ satisfies Condition (L) by taking $J=\lr$. Now we show that $E$ satisfies Condition (MT3). Let $v,w\in E^0$ and to obtain a contradiction, suppose that there is no $y\in E^0$ as (MT3). If $T(v)$ and $T(w)$ are the trees of $v$ and $w$, respectively, then we have $T(v)\cap T(w)=\emptyset$. Moreover, similar to the proof of Lemma \ref{lem4.2} we have that $E\setminus T(w)$ is a saturated subset of $E^0$. Hence, $T(v)\subseteq E^0\setminus T(w)$ gives that $\overline{T(v)}\subset E^0\setminus T(w)$. Similarly, $\overline{T(w)}\subset E^0\setminus T(v)$ and so $\overline{T(v)}\cap \overline{T(w)}=\emptyset$. Therefore, by Lemma \ref{lem4.1}, we have
$$I(\overline{T(v)},\emptyset)\cap I(\overline{T(w)},\emptyset)=I(\overline{T(v)}\cap \overline{T(w)}, \emptyset)=(0),$$
which contradicts the hypothesis.

(3). If $R$ is not a field and $I$ is a nonzero proper ideal of $R$, then $I\lr \cap E^0=\emptyset$. Now by using this fact, the statement follows from the part (2).
\end{proof}

Recall that a ring $R$ is said to be left (right) primitive if there exists a faithful simple left (right) $R$-module $M$. Hence, for determining when a Leavitt path algebra $\lr$ is left (or right) primitive we try to determine when a simple and faithful left (or right) $\lr$-module exists. At first, we give the form of simple left $\lr$-modules.

\begin{lemma}\label{lem6.3}
Let $E$ be a graph and let $R$ be a unital commutative ring. If $M$ is a simple left $\lr$-module, then there exist a vertex $v\in E^0$ and a maximal left $\lr$-submodule $I$ of $\lr v$ such that $M\cong \lr v/I$.
\end{lemma}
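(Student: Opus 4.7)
The plan is to realize $M$ as a cyclic quotient $L_R(E)v/I$ by choosing $v \in E^0$ judiciously and mapping out of $L_R(E)v$. Concretely, I aim to find $v \in E^0$ such that $vm \neq 0$ for some fixed nonzero $m \in M$; then evaluation at $m$ will produce a surjective $L_R(E)$-linear map $L_R(E)v \to M$, whose kernel is the desired maximal submodule.

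First I would fix a nonzero $m \in M$. The simplicity of $M$ (together with the implicit non-degeneracy $L_R(E)M = M$, which is forced by the local-unit structure of $L_R(E)$) guarantees that $L_R(E)m$ is a nonzero $L_R(E)$-submodule of $M$, and hence $L_R(E)m = M$.

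Next I would invoke the local units $\{t_n\}_{n \in \N}$ from Section 2, where $t_n = v_1 + \cdots + v_n$ is a sum of vertices and every $x \in L_R(E)$ satisfies $x t_n = x$ for $n$ sufficiently large. Since $L_R(E) m = M \ni m \neq 0$, we cannot have $t_n m = 0$ for every $n$: otherwise, any $x \in L_R(E)$ would satisfy $xm = (xt_n)m = x(t_n m) = 0$ for a suitable $n$, forcing $L_R(E)m = 0$, a contradiction. Thus some $t_{n_0} m = \sum_{i=1}^{n_0} v_i m$ is nonzero, which yields a vertex $v := v_i$ with $vm \neq 0$.

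With this $v$ in hand, I would define the evaluation map $\phi : L_R(E)v \to M$ by $\phi(xv) := (xv)m$. This is manifestly well-defined and $L_R(E)$-linear, and $\phi(v) = vm \neq 0$, so $\im \phi$ is a nonzero $L_R(E)$-submodule of the simple module $M$; hence $\im \phi = M$. Setting $I := \ker \phi$, the first isomorphism theorem gives $L_R(E)v / I \cong M$ as left $L_R(E)$-modules, and the simplicity of $M$ then forces $I$ to be a maximal left $L_R(E)$-submodule of $L_R(E)v$. The only real subtlety is the non-unital nature of $L_R(E)$, which makes the usual ``pick any generator'' argument for cyclic simple modules slightly less automatic; the local-unit structure is precisely the tool needed to extract a vertex $v$ with $vm \neq 0$, and the rest is routine.
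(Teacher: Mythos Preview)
Your proof is correct and follows essentially the same idea as the paper's: locate a vertex $v$ for which $L_R(E)v$ surjects onto $M$, and take the kernel. The paper's argument is slightly less direct: it first invokes the ``well-known'' fact that $M \cong L_R(E)/J$ for some maximal left ideal $J$, then picks a vertex $v \notin J$ and applies the second isomorphism theorem to get $L_R(E)/J \cong (J + L_R(E)v)/J \cong L_R(E)v/(J \cap L_R(E)v)$. Your evaluation-map approach bypasses the intermediate quotient $L_R(E)/J$ and is more explicit about the non-unital subtlety, using the local units to produce $v$ with $vm \neq 0$; the paper's version sweeps the corresponding step (why some vertex lies outside $J$) into the ``well-known'' clause. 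Both arguments are short and the difference is purely presentational.
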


\begin{proof}
It is well-known that there is a maximal ideal $J$ of $\lr$ such that $M\cong \lr/J$. Take a vertex $v\notin J$. Then we have $J+\lr v=\lr$ and so
$$M\cong \frac{\lr}{J}\cong\frac{J+\lr v}{J}\cong \frac{\lr v}{I}$$
where $I:=J\cap \lr v$ is a maximal left $\lr$-submodule of $\lr v$.
\end{proof}

\begin{lemma}\label{lem6.4}
Let $E$ be a graph containing a closed simple path with no exits and let $R$ be a unital commutative ring. Then $\lr$ is not primitive.
\end{lemma}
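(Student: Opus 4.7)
The plan is to exhibit a commutative corner $vL_R(E)v \cong R[x,x^{-1}]$ coming from the exitless cycle, to transport a hypothetical faithful simple $L_R(E)$-module to a faithful simple module over this corner, and then to contradict the fact that $R[x,x^{-1}]$ is commutative but not a field.

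First I would pin down the corner. Write the given closed simple path without exits as $\alpha = e_1\cdots e_n$, set $v := s(\alpha) = r(\alpha)$, and $v_i := s(e_i)$. Because $\alpha$ has no exit, each $v_i$ is a finite emitter with unique outgoing edge $e_i$, so $e_ie_i^* = v_i$ by Definition~\ref{defn1.4}(4); a telescoping collapse then yields $\alpha\alpha^* = v$, while $\alpha^*\alpha = r(\alpha) = v$ is automatic. Every path with source $v$ is forced to stay on the cycle, so has the form $\alpha^k(e_1\cdots e_j)$ with $k\geq 0$ and $0\leq j<n$, and $(e_1\cdots e_j)(e_1\cdots e_j)^*$ collapses to $v$ by the same telescoping. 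Hence every spanning monomial $\gamma\delta^*\in vL_R(E)v$ (with $s(\gamma)=s(\delta)=v$ and $r(\gamma)=r(\delta)$) reduces to $\alpha^k(\alpha^*)^l$, and using $\alpha\alpha^* = \alpha^*\alpha = v$ this further collapses to a single $\alpha^m$, where by convention $\alpha^{-m} := (\alpha^*)^m$ for $m>0$. Combined with the $\mathbb{Z}$-grading on $L_R(E)$, which separates the distinct $\alpha^m$ and shows them $R$-linearly independent, this produces a ring isomorphism $vL_R(E)v \cong R[x,x^{-1}]$ sending $v\mapsto 1$ and $\alpha\mapsto x$.

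Next I would transfer primitivity to this corner. Assuming for contradiction that $L_R(E)$ has a faithful simple left module $M$, one has $vM\neq 0$ because $v$ is a nonzero idempotent and $\mathrm{ann}_{L_R(E)}(M)=0$. For any nonzero $n\in vM$ (so $n = vn$), simplicity of $M$ gives $L_R(E)n = M$, and then
\[
(vL_R(E)v)\,n \;=\; vL_R(E)(vn) \;=\; vL_R(E)n \;=\; vM,
\]
so $vM$ is simple as a left $vL_R(E)v$-module. Faithfulness is just as easy: if $z = vzv$ annihilates $vM$, then $zm = vz(vm) = 0$ for every $m\in M$, so $z\in\mathrm{ann}_{L_R(E)}(M) = 0$. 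Thus $vL_R(E)v \cong R[x,x^{-1}]$ would be left primitive.

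Finally I would rule this out. A commutative ring is left primitive if and only if it is a field (any simple module has the form $A/\mathfrak{m}$ and is faithful iff $\mathfrak{m} = 0$), but $R[x,x^{-1}]$ is not a field: if $x-1$ were a unit with inverse $f$, clearing by a sufficiently large power $x^N$ would yield an identity $(x-1)g(x) = x^N$ in $R[x]$, and evaluation at $x = 1$ would force $0 = 1$ in $R$. This contradicts left primitivity of $L_R(E)$; the natural anti-automorphism $\alpha\beta^*\mapsto\beta\alpha^*$ of $L_R(E)$ then reduces the right-primitive case to the left-primitive one. The main obstacle is the telescoping identification of $vL_R(E)v$ with $R[x,x^{-1}]$; once that corner is secured, the module-theoretic descent and the commutative-implies-field observation are routine.
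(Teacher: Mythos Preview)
Your proof is correct and follows essentially the same strategy as the paper: identify the corner $vL_R(E)v$ at the base of the exitless cycle with $R[x,x^{-1}]$, then argue that this corner would have to be primitive, which is impossible since a commutative primitive ring is a field. The only difference is that you spell out in full both the corner-transfer of a faithful simple module and the reason $R[x,x^{-1}]$ is not a field, whereas the paper simply invokes ``every corner of a primitive algebra is also primitive'' and ``a commutative ring is primitive if and only if it is a field.''
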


\begin{proof}
If $\alpha$ is a closed simple path based at $v$ with no exits, then Equation \ref{2.1} shows that
\begin{align*}
v\lr v&=\mathrm{span}_R\left\{\alpha\beta^*:r(\alpha)=r(\beta)~\mathrm{and}~s(\alpha)=s(\beta)=v\right\}\\
&=\mathrm{span}_R\left\{v,\alpha^m,\alpha^{*n}:m,n\in\mathbb{N}\right\}.
\end{align*}
The latter $R$-algebra is isomorphic to the Leavitt path algebra of the graph
$$\includegraphics[width=3cm]{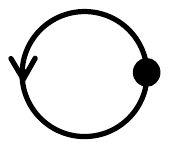}$$
which is isomorphic to the Laurent polynomials ring $R([x,x^{-1}])$ as rings. However, this algebra is not primitive and hence $\lr v$ is not primitive. (Note that a commutative ring is primitive if and only if it is a field.) Now the fact that every corner of a primitive algebra is also primitive yields the result.
\end{proof}

We denote by Mod-$\lr$ the collection of all left $\lr$-modules. As in \cite{ara1}, for $v\in E^0$ we denote\vspace{.2cm}

\leftline{\hspace{.7cm}$S_v:=\{M\in\mathrm{Mod}-\lr: M\cong\frac{\lr v}{I}$}

\rightline{$~\mathrm{for~some~maximal~left~submodule~of~}v\lr\}$.\hspace{.7cm}}\vspace{.2cm}
An standard argument similar to the proof of \cite[Proposition 4.4]{ara3} gives the following lemma.

\begin{lemma}\label{lem6.5}
Let $E$ be a graph and let $R$ be a unital commutative ring. If $v,w\in E^0$ and $\alpha$ is a path from $v$ to $w$, then
\begin{enumerate}[$(1)$]
\item $S_v=S_w$,
\item $\lr v/I\cong \lr w/I\alpha$ for every maximal left $\lr$-submodule $I$ of $\lr v$.
\end{enumerate}
\end{lemma}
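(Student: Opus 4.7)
The plan is to prove (2) via the natural right-multiplication-by-$\alpha$ map $\phi:\lr v\twoheadrightarrow \lr w/I\alpha$ given by $\phi(xv):=x\alpha+I\alpha$, and then to deduce (1) by a dual pull-back construction.

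Well-definedness of $\phi$ follows from $v\alpha=\alpha$, which makes $xv\cdot\alpha=x\alpha$ depend only on $xv\in \lr v$; the left $\lr$-module homomorphism property is then immediate from associativity. For surjectivity, the Cuntz--Krieger identity $\alpha^*\alpha=w$ shows that each $yw\in \lr w$ equals $(y\alpha^*)\alpha$, with $y\alpha^*\in \lr v$ because $\alpha^*\in w\lr v$; hence $\phi((y\alpha^*)v)=yw+I\alpha$ lies in the image.

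The heart of the argument is the kernel computation $\ker\phi=I$. The inclusion $I\subseteq \ker\phi$ is immediate from the definition of $I\alpha$. Maximality of $I$ then forces $\ker\phi\in\{I,\lr v\}$, and it remains to rule out $\ker\phi=\lr v$. Here the key tool is the orthogonal idempotent decomposition $v=\alpha\alpha^*+(v-\alpha\alpha^*)$, which gives the direct-sum decomposition $\lr v=\lr\alpha\alpha^*\oplus \lr(v-\alpha\alpha^*)$ of left $\lr$-modules, and the isomorphism $\lr\alpha\alpha^*\cong \lr w$ implemented by right multiplication by $\alpha$ (with inverse given by right multiplication by $\alpha^*$, using $\alpha^*\alpha=w$). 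Combining these with the simplicity of $\lr v/I$ one rules out the degenerate possibility $I\alpha=\lr w$, so that $\ker\phi=I$ and the first isomorphism theorem delivers (2). Simplicity of $\lr v/I$ then makes $I\alpha$ automatically a maximal left submodule of $\lr w$.

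Part (1) is then a two-way consequence. The inclusion $S_v\subseteq S_w$ follows directly from (2). For the reverse $S_w\subseteq S_v$, given a maximal left $\lr$-submodule $J$ of $\lr w$, set $I:=\{xv\in \lr v:x\alpha\in J\}$; this is the kernel of the analogous surjection $\lr v\twoheadrightarrow \lr w/J,\ xv\mapsto x\alpha+J$, whose surjectivity (by the same identity $\alpha^*\alpha=w$) yields $\lr v/I\cong \lr w/J$, whence $I$ is maximal in $\lr v$ and $[\lr w/J]\in S_v$. The main obstacle throughout is the kernel step in~(2), namely controlling the degenerate case $\ker\phi=\lr v$ and extracting from the idempotent decomposition enough information to forbid it when $I$ is a proper maximal submodule; all remaining verifications reduce to routine computations with the spanning description of Lemma~\ref{lem3.3} and the Leavitt family relations of Definition~\ref{defn1.4}.
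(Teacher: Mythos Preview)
Your argument has a genuine gap at precisely the step you yourself flag as ``the main obstacle'': ruling out the degenerate case $I\alpha=\lr w$. You write that ``combining these with the simplicity of $\lr v/I$ one rules out the degenerate possibility $I\alpha=\lr w$,'' but you give no actual argument, and in fact none exists. With the decomposition $\lr v=\lr\alpha\alpha^*\oplus\lr(v-\alpha\alpha^*)$ and the identification $\lr\alpha\alpha^*\cong\lr w$, the surjection $\lr v\to\lr w$ given by right multiplication by $\alpha$ has kernel exactly $\lr(v-\alpha\alpha^*)$; hence $I\alpha=\lr w$ is equivalent to $\lr(v-\alpha\alpha^*)\not\subseteq I$, and nothing forces a maximal submodule $I$ to contain this summand.

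Concretely, let $R$ be a field and let $E$ have vertices $v,w$ with two edges $e,f$ from $v$ to the sink $w$. Then $L_R(E)\cong M_3(R)$, the module $\lr w$ is simple, and $\lr v\cong\lr w\oplus\lr w$ via $xv\mapsto(xe,xf)$. The diagonal $I:=\lr(e^*+f^*)$ is a maximal left submodule of $\lr v$ with $\lr v/I\cong\lr w$, yet for $\alpha=e$ one has $I\alpha=\lr(e^*e+f^*e)=\lr w$, so $\lr w/I\alpha=0\not\cong\lr v/I$. Thus (2) fails as stated. Part (1) fails as well: if instead $v$ emits $e$ to a sink $w$ and $f$ to a distinct sink $u$, then $\lr w\not\cong\lr u$ and $S_v\supseteq\{[\lr w],[\lr u]\}\neq\{[\lr w]\}=S_w$. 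Your pull-back argument for $S_w\subseteq S_v$ is correct, but the reverse inclusion, which you deduce from (2), is not valid in general. The paper gives no proof of its own here, only a pointer to \cite[Proposition~4.4]{ara3}; the lemma as stated evidently needs additional hypotheses present in that reference or in the applications in Section~6.
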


\begin{lemma}\label{lem6.6}
Let $E$ be a graph and let $R$ be a unital commutative ring. If $v$ is a vertex with $|s^{-1}(v)|\geq 2$ and $\lr v/I$ is a simple left $\lr$-module for a maximal left $\lr$-submodule $I$ of $\lr v$, then
$$\frac{\lr v}{I}\cong\frac{\lr r(e)}{\lr e}$$
for every edge $e\in s^{-1}(v)$ but probably one.
\end{lemma}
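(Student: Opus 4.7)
The plan is to construct, for each edge $e\in s^{-1}(v)$, a canonical left $\lr$-module homomorphism from $\lr r(e)$ to $\lr v/I$, and then show that after factoring out the submodule denoted $\lr e$ in the statement, this map is an isomorphism for all but at most one choice of $e$.

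Concretely, Lemma \ref{lem5.2} gives the orthogonal decomposition $\lr v = \lr ee^* \oplus \lr(v-ee^*)$, and there is an isomorphism of left $\lr$-modules $\lr ee^* \cong \lr r(e)$ via $y\mapsto ye$, with inverse $z\mapsto ze^*$ (using $e^*e=r(e)$ and the idempotency of $ee^*$). Transporting along the inverse and composing with the quotient map yields the natural morphism $\phi_e:\lr r(e)\to \lr v/I$ given by $\phi_e(x)=xe^*+I$. Its image is $(\lr ee^*+I)/I$, which by the maximality of $I$ and the simplicity of $\lr v/I$ is either $0$ (precisely when $ee^*\in I$, equivalently $e^*\in I$) or all of $\lr v/I$. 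In the latter situation the first isomorphism theorem produces $\lr r(e)/\ker\phi_e\cong \lr v/I$, and one identifies $\ker\phi_e=\{x\in \lr r(e):xe^*\in I\}$ with the submodule $\lr e$ appearing in the lemma (essentially the image of $\lr ee^*\cap I$ under the isomorphism above).

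The combinatorial core is to verify that the exceptional case $ee^*\in I$ can occur for at most one edge $e\in s^{-1}(v)$. I would argue this by contradiction. Suppose $e_1e_1^*,e_2e_2^*\in I$ for distinct $e_1,e_2\in s^{-1}(v)$. Note that $v\notin I$ (otherwise $\lr v\subseteq I$, contradicting $\lr v/I\neq 0$), so $v+I$ generates the simple module $\lr v/I$ and in particular $I+\lr e_ie_i^*\supsetneq I$ for each $i$ is ruled out. Using the orthogonality $e_1e_1^*\cdot e_2e_2^*=0$, left-multiplying cyclic representatives of $v+I$ by $e_1e_1^*$ and $e_2e_2^*$, and invoking the Leavitt relation $v=\sum_{s(e)=v}ee^*$ in the finite-emitter case (or, in the infinite case, the analogous relation on $v-\sum_{e\in F}ee^*$ for a finite $F\subseteq s^{-1}(v)$ containing $e_1,e_2$) should force $v\in I$, a contradiction. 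Thus at most one edge fails to give the isomorphism, as required.

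The main obstacle is the infinite-emitter case, where the clean identity $v=\sum ee^*$ is not available: one must instead work with the idempotent $v-\sum_{e\in F}ee^*$ for a carefully chosen finite $F$ and keep track of how $I$ distributes across the summands $\lr ee^*$ (for $e\in F$) and $\lr(v-\sum_{e\in F}ee^*)$, before combining this data with the cyclic generator $v+I$ to propagate the contradiction to the full $\lr v$.
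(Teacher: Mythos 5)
Your setup is fine as far as it goes: the maps $\phi_e\colon \lr r(e)\to \lr v/I$, $x\mapsto xe^*+I$, the dichotomy for the image, and the equivalence ``image zero iff $ee^*\in I$'' are all correct. The failure is exactly at the step carrying the content of the lemma: the identification $\ker\phi_e=\lr e$ is false. In your ``good'' branch one has $\lr ee^*\not\subseteq I$, hence $\phi_e(e)=ee^*+I\neq I$ and $e\notin\ker\phi_e$, so $\lr e\not\subseteq\ker\phi_e$; what the first isomorphism theorem actually gives is $\lr v/I\cong \lr r(e)/(I\cap \lr ee^*)e$, a quotient by a submodule that depends on $I$, not the quotient asserted in the statement. (Worse, since $r(e)=e^*e\in\lr e$, one has $\lr e=\lr r(e)$, so $\ker\phi_e=\lr e$ would force $\phi_e\equiv 0$, i.e.\ precisely the case you excluded.) Note also that the paper argues in the \emph{opposite} branch: it derives the displayed isomorphism for those edges $e_i$ with $\lr e_ie_i^*=Ie_ie_i^*$ (equivalently $e_ie_i^*\in I$), where it gets $\lr e_i=Ie_i$ and then invokes Lemma \ref{lem6.5}(2) to write $\lr v/I\cong\lr r(e_i)/Ie_i$; for exactly those edges your $\phi_e$ is the zero map, so your construction cannot substitute for that appeal.

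The ``combinatorial core'' is also false: $ee^*\in I$ can hold for many edges at once. Let $v$ emit three edges $e_1,e_2,e_3$ to a sink $w$ and let $R$ be a field. Then $\lr v=\lr e_1e_1^*\oplus\lr e_2e_2^*\oplus\lr e_3e_3^*$ by Lemma \ref{lem5.2}, each summand is isomorphic to $\lr w$, which is a minimal left ideal by Proposition \ref{prop5.5} since $w$ is a line point; hence $I:=\lr e_1e_1^*\oplus\lr e_2e_2^*$ is a maximal submodule of $\lr v$ containing $e_1e_1^*$ and $e_2e_2^*$ but not $v$. Your orthogonality argument could only force $v\in I$ if \emph{every} edge in $s^{-1}(v)$ had $ee^*\in I$ and $v$ were a finite emitter, so two such edges produce no contradiction; and if $v$ is an infinite emitter one can even choose a maximal $I$ containing $ee^*$ for all $e\in s^{-1}(v)$ (the submodule generated by these corners is proper, and $\lr v$ is cyclic, so it sits inside a maximal one). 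What your decomposition genuinely yields is the reverse count: at least one summand of $\lr v=\bigoplus_{e\in F}\lr ee^*\oplus\lr\bigl(v-\sum_{e\in F}ee^*\bigr)$ maps onto the simple quotient, which corresponds to the paper's dichotomy $I\subseteq\bigoplus_i Ie_ie_i^*\oplus Ix\subseteq\lr v$ (its Cases I and II), but it is the opposite of the ``all but at most one'' count your plan needs. As it stands, the proposal does not prove the lemma.
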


\begin{proof}
For $n\geq 2$, select $e_1,\ldots,e_n\in s^{-1}(v)$ and write $v=e_1e_1^*+\ldots+e_ne_n^*+x$ where $x:=v-\sum_{i=1}^ne_ie_i^*$. Then $I=Iv\subseteq Ie_1e_1^*\oplus\ldots\oplus Ie_ne_n^*\oplus Ix\subseteq \lr v$. Since $I$ is a maximal left $\lr$-submodule of $\lr v$, we have two possibilities.

$\underline{\emph{Case I}}$ : $Ie_1e_1^*\oplus\ldots\oplus Ie_ne_n^*\oplus Ix=\lr v.$\\
Then for every $a\in\lr$, there are $t_i,s\in I$ such that $av=\sum_{i=1}^n t_ie_i e_i^*+sx$, and so by multiplying $e_ie_i^*$ from right hand side we have $ae_ie_i^*=t_ie_ie_i^*\in I$ for all $i$. Thus, for each $i$, $\lr e_ie_i^*=Ie_ie_i^* $, $\lr e_i=Ie_i$, and
$$\frac{\lr v}{I}\cong\frac{\lr r(e_i)}{Ie_i}\cong\frac{\lr r(e_i)}{\lr e_i}$$
by Lemma \ref{lem6.5}(2).

$\underline{\emph{Case II}}$ : $Ie_1e_1^*\oplus\ldots\oplus Ie_ne_n^*\oplus Ix=I.$\\
Then we have
$$\frac{\lr v}{I}\cong\frac{\oplus_{i=1}^n\lr e_ie_i^*\oplus \lr x}{\oplus_{i=1}^nIe_ie_i^* \oplus Ix}\cong\bigoplus_{i=1}^n\frac{\lr e_ie_i^*}{Ie_ie_i^*}\bigoplus\frac{\lr x}{Ix}$$
because $e_ie_i^*$ and $x$ are pairwise orthogonal. The simplicity of $\lr v/ I$ implies that all summands but only one are zero. For such indexes $i$, we have $\lr e_ie_i^*=I e_ie_i^*$, $\lr e_i=I e_i$, and
$$\frac{\lr v}{I}\cong\frac{\lr r(e_i)}{I e_i}\cong\frac{\lr r(e_i)}{\lr e_i}.$$
This completes the proof.
\end{proof}

Now we are in the position to characterize primitive Leavitt path algebras.

\begin{theorem}[See Theorem 4.6 of \cite{ara3}]\label{thm6.7}
Let $E$ be a graph and let $R$ be a unital commutative ring. Then the following are equivalent.
\begin{enumerate}[$(1)$]
\item $\lr$ is left primitive.
\item $\lr$ is right primitive.
\item $R$ is a field and $E$ satisfies Conditions (L) and (MT3).
\item $I\cap J\cap E^0\neq \emptyset$ for every nonzero ideals $I$ and $J$ of $\lr$.
\end{enumerate}
\end{theorem}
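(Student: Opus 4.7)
The equivalence $(3)\Leftrightarrow(4)$ is immediate from Proposition~\ref{prop6.2}(3), so the proof reduces to establishing $(1)\Leftrightarrow(4)$; the corresponding argument for $(2)\Leftrightarrow(4)$ is symmetric, obtained by working with right modules and right corners throughout. The plan is thus to treat the two nontrivial implications $(1)\Rightarrow(4)$ and $(4)\Rightarrow(1)$ separately.

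For $(1)\Rightarrow(4)$, I would begin with a faithful simple left $\lr$-module $M$. A standard argument first shows $\lr$ is prime: if two nonzero ideals $I,J$ of $\lr$ had $IJ=0$, simplicity and faithfulness would force $JM=M$ and then $IM=IJM=0$, contradicting faithfulness. Hence Proposition~\ref{prop4.4} yields that $R$ is an integral domain and $E$ satisfies Condition~(MT3); Lemma~\ref{lem6.4} (contrapositively) rules out closed simple paths without exits, giving Condition~(L). The delicate remaining step is to upgrade ``integral domain'' to ``field''. For this I would invoke Lemma~\ref{lem6.3} to write $M\cong\lr v/N$, note that $R$ embeds as a central subring of $\mathrm{End}_{\lr}(M)$ via its faithful action on $M$, and then pass to the corner module $vM=(v\lr v+N)/N$, which is a simple $v\lr v$-module. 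Using Lemma~\ref{lem6.5} to translate the base vertex along paths, I would reduce to a vertex whose corner $v\lr v$ is tractable enough to force any nonzero element of $R$ to act invertibly on $vM$; this, combined with the faithful action of $Rv$ on $vM$, pins down that $R$ is a field. Proposition~\ref{prop6.2}(3) then delivers~$(4)$.

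For $(4)\Rightarrow(1)$, under the hypotheses that $R$ is a field and $E$ satisfies~(L) and~(MT3), I would construct a faithful simple left $\lr$-module. The candidates are quotients $\lr v/N$ for a judiciously chosen vertex $v$ and maximal left submodule $N$. If a line point $v\in P_l(E)$ exists, Proposition~\ref{prop5.5} makes $\lr v$ itself a simple module (this is exactly where $R$ being a field is used), and Condition~(MT3) together with the line-point property of $v$ ensures that every nonzero element of $\lr$ fails to annihilate some translate of $v$, yielding faithfulness. If $P_l(E)=\emptyset$, every vertex is either a bifurcation or lies on a closed path; using~(L) and~(MT3) I would construct $N$ by carefully selecting an infinite path so that the induced maximal submodule of $\lr v$ produces a quotient whose two-sided annihilator in $\lr$ is zero, then verify simplicity and faithfulness with the aid of the Cuntz--Krieger Uniqueness Theorem applied to the quotient of $\lr$ by that annihilator.

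The main obstacle is the $P_l(E)=\emptyset$ subcase of $(4)\Rightarrow(1)$, where one must simultaneously arrange simplicity and faithfulness of $\lr v/N$ in a possibly non-row-finite graph with breaking vertices and infinite emitters; here the structural flexibility provided by~(L) and~(MT3) must be leveraged to pick the maximal submodule $N$ with care. A secondary obstacle is the upgrade from integral domain to field in $(1)\Rightarrow(4)$, since a commutative central subring of a division ring need not itself be a field, so the argument must exploit the specific corner structure of Leavitt path algebras (via the vertex corners $v\lr v$) rather than any abstract density argument.
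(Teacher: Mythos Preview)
Your overall scaffolding matches the paper: Proposition~\ref{prop6.2}(3) gives $(3)\Leftrightarrow(4)$, the implication $(1)\Rightarrow(3)$ proceeds via primitive$\Rightarrow$prime (so Proposition~\ref{prop4.4} yields (MT3)) together with Lemma~\ref{lem6.4} for Condition~(L), and the implication $(3)\Rightarrow(1)$ splits according to whether $P_l(E)$ is empty. Two points of divergence deserve comment.

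\emph{The step ``$R$ is a field''.} The paper does not pass through Schur's lemma or corner modules at all. After writing the faithful simple module as $M\cong\lr v/I$ via Lemma~\ref{lem6.3}, it argues directly that $I=(0)$, so that $M=\lr v$; then a nonzero proper ideal $J\lhd R$ would produce the nonzero proper submodule $J\lr v$, contradicting simplicity. Your endomorphism-ring route is more circuitous, and the obstacle you identify (a commutative subring of a division ring need not be a field) is genuine and is not resolved by the corner manoeuvre you sketch; you would need a further idea to close it, whereas the paper bypasses the issue entirely.

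\emph{The case $P_l(E)=\emptyset$ in $(3)\Rightarrow(1)$.} Here your approach differs substantially and carries the real gap. You propose to \emph{build} a faithful simple module from an infinite path and verify faithfulness via Cuntz--Krieger; none of this is carried out, and controlling simultaneously the simplicity and the annihilator of such a module in a non-row-finite graph is exactly the delicate point. The paper avoids construction altogether with a short counting argument: by Lemma~\ref{lem6.5} and (MT3) one has $S_v=S_w$ for all vertices, so every simple lies in $S_v$ for a fixed bifurcating vertex $v$; Lemma~\ref{lem6.6} then forces every simple in $S_v$ to be isomorphic to $\lr r(e)/\lr e$ for all but possibly one $e\in s^{-1}(v)$, so there are at most two isomorphism classes of simple left $\lr$-modules. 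Since the Jacobson radical $J(\lr)=0$ is the intersection of their (at most two) annihilators, Proposition~\ref{prop6.2}(3) forces one annihilator to vanish, producing a faithful simple module. This finiteness argument is both shorter and entirely sidesteps the infinite-emitter and breaking-vertex difficulties you flag; your proposal does not yet contain an idea that would replace it.
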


\begin{proof}
(3) $\Leftrightarrow$ (4) is Proposition \ref{prop6.2}(3). We only prove (1) $\Leftrightarrow$ (3). The implications (2) $\Leftrightarrow$ (3) may be proved analogously.

(1) $\Rightarrow$ (3). Let $\lr$ be left primitive. Then $\lr$ is a prime ring and so Proposition \ref{prop4.4} yields that $E$ satisfies Condition (MT3). Also, Lemma \ref{lem6.4} implies that $E$ also satisfies Condition (L).

Now we show that $R$ is a field. Let $M$ be a faithful simple left $\lr$-module. Then Lemma \ref{lem6.3} gives that $M\cong \lr v/I$ for some $v\in E^0$ and some maximal left $\lr$-submodule $I$ of $\lr v$. Hence for every $a\in I$ we have $aM=0$ and so $a=0$ by the faithfulness. Thus $I=(0)$ and $M=\lr v$. Now if $R$ has a proper nonzero ideal $J$, then $JM=J\lr v$ is a proper and nonzero left $\lr$-submodule of $M$, a contradiction. This concludes that $R$ is a field.

(3) $\Rightarrow$ (1). Suppose that $R$ is a field and $E$ satisfies Conditions (L) and (MT3). If $P_l(E)\neq \emptyset$ and $v\in P_l(E)$, then Proposition \ref{prop5.5} implies that $M=\lr v$ is a minimal left ideal and so $M$ is a simple left $\lr$-module. (Note that every left $\lr$-submodule of $\lr v$ is of the form $Jv$ where $J$ is a left ideal of $\lr$.) $M$ is also faithful. Indeed, if $aM=a\lr v=0$ for some $a\in \lr$, then we have $a=0$ because $\lr$ is prime by Proposition \ref{prop4.4}. So, $M$ is a desired left $\lr$-module and therefore $\lr$ is left primitive.

Now assume that $P_l(E)=\emptyset$. Since $E$ satisfies Condition (L), there is a vertex $v\in E^0$ with $|s^{-1}(v)|\geq 2$. For every $w\in E^0$, Condition (MT3) yields that there is a vertex $y\in E^0$ such that $v,w\geq y$. Hence, we have $S_v=S_y=S_w$ by Lemma \ref{lem6.5}(1).

If $M$ is a simple left $\lr$-module, then $M\cong \lr w/I$ for some $w\in E^0$ and some maximal left $\lr$-submodule $I$ of $\lr w$. But since $S_v=S_w$, $M$ is isomorphic to $\lr v/J$ for some maximal left $\lr$-submodule $J$ of $\lr v$. Therefore, by Lemma \ref{lem6.6}, we have
\begin{align*}
&\left\{M:M\mathrm{~is~a~simple~left~}\lr\mathrm{-module}\right\}\\
=&\left\{M:M\in S_v\right\}\\
=&\left\{\frac{\lr r(e)}{\lr e}:e\in s^{-1}(v)\mathrm{~and~}\frac{\lr r(e)}{\lr e}~\mathrm{is~simple~}\right\}.
\end{align*}
Note that Lemma \ref{lem6.6} gives that the later set contains at most two nonisomorphic left $\lr$-modules. Thus, the Jacobson radical of $\lr$ is
\begin{align*}
J(\lr)&=\bigcap_{M~\mathrm{left~simple}}\mathrm{Ann} (M)\\
&=\bigcap_{s(e)=v,~\frac{\lr r(e)}{\lr e}~\mathrm{simple}}\mathrm{Ann}\left(\frac{\lr r(e)}{\lr e}\right),
\end{align*}
and since $J(\lr)=0$ (see [3, Proposition 6.3]), by applying Proposition \ref{prop6.2}(3) we get that $\mathrm{Ann}(\lr r(e)/\lr e)=0$ for some simple left $\lr$-module $\lr r(e)/\lr e$. Therefore, $\lr$ is left primitive.
\end{proof}

\begin{remark}
We know that if a commutative ring $R$ is primitive, then it is a field. If $R$ is a unital commutative ring, then $R$ can be considered as the Leavitt path algebra of a single vertex. However, Theorem \ref{thm6.7} ensures that if $\lr$ is primitive for some graph $E$, then $R$ is a field. Furthermore, by comparing Theorem \ref{thm6.7} and Proposition \ref{prop4.4}, we see that a prime Leavitt path algebra may be nonprimitive duo to its coefficients ring. For example the Toeplitz algebra with coefficients in $\Z$ which is the Leavitt path algebra of the graph
$$\includegraphics[width=4cm]{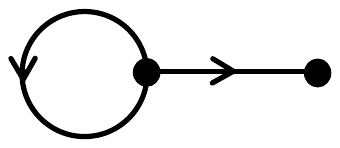}$$
is a nonprimitive prime ring because $\Z$ is not a field.

Now, as for prime ideals in Section 4, we can characterize primitive graded ideals of Leavitt path algebras by applying Theorem \ref{thm6.7}. Recall that an ideal $I$ of a ring $R$ is said to be primitive if the quotient $R/I$ is a primitive ring. Duo to Theorem \ref{thm6.7}(3) we assume that $R$ is a filed. It is well-known that every primitive ideal is also prime. Hence, we look up in the graded prime ideals of $\lr$ described in Proposition \ref{prop4.4} for determining primitive graded ideals. We denote by $M_\gamma(E)$ the set of maximal tails $M$ in $E$ such that the subgraph $(M,r^{-1}(M),r,s)$ satisfies Condition (L).
\end{remark}

\begin{proposition}\label{prop6.9}
Let $E$ be a graph and let $R$ be a field. Then
$$\left\{I(\Omega(M),B_{\Omega(M)}),I(\Omega(v),B_{\Omega(v)}\setminus\{v\}):M\in M_\gamma(E), v\in BV(E)\right\}$$
is the set of primitive graded ideals of $\L_R(E)$.
\end{proposition}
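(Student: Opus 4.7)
The plan is to combine Theorem \ref{thm4.8}, Theorem \ref{thm3.10}(3), and Theorem \ref{thm6.7}. Since $R$ is a field, every ideal of $L_R(E)$ is basic, and every primitive ideal is prime; hence each primitive graded ideal is automatically a prime graded basic ideal, so it must appear in the list provided by Theorem \ref{thm4.8}:
$$\{I(\Omega(M),B_{\Omega(M)}):M\in M(E)\}\cup\{I(\Omega(v),B_{\Omega(v)}\setminus\{v\}):v\in BV(E)\}.$$
Thus the task reduces to identifying which members of this list are primitive.

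For any such admissible pair $(H,S)$, Theorem \ref{thm3.10}(3) yields $L_R(E)/I(H,S)\cong L_R(E/(H,S))$, and Theorem \ref{thm6.7} then says $I(H,S)$ is primitive iff $E/(H,S)$ satisfies both Condition (L) and Condition (MT3). Since $I(H,S)$ is already prime, Proposition \ref{prop4.4} ensures (MT3) holds in the quotient, so the question reduces to verifying Condition (L) in $E/(H,S)$.

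First case: $(H,S)=(\Omega(M),B_{\Omega(M)})$ with $M\in M(E)$. By Lemma \ref{lem4.6} and the construction in Definition \ref{defn3.6}, $B_H\setminus S=\emptyset$, so no new sink $v'$ is introduced, and $E/(H,S)$ is precisely the induced subgraph $(M,r^{-1}(M),r,s)$. Hence (L) in the quotient is literally the defining condition for $M\in M_\gamma(E)$.

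Second case: $(H,S)=(\Omega(v),B_{\Omega(v)}\setminus\{v\})$ with $v\in BV(E)$. I will argue (L) holds automatically, so every such ideal is primitive. Suppose, for contradiction, that $\alpha=e_1\ldots e_n$ were a closed simple path without exits in $E/(H,S)$; then every $s(e_i)$ emits only $e_i$, so any forward path starting from $s(\alpha)$ is trapped on the finite vertex set $\{s(e_i)\}_{i=1}^n$. But $v'$ is a sink of the quotient, distinct from each $s(e_i)$, and applying (MT3) to the pair $(s(\alpha),v')$ forces $s(\alpha)\geq v'$ in the quotient, contradicting the trapping. Hence no such $\alpha$ exists and Condition (L) holds. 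Combining the two cases yields the claimed characterization.

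The main obstacle is the second case: one must carefully distinguish the original edges of $E$ that survive into the quotient from the new $e'$-edges terminating at $v'$, and exploit the presence of the sink $v'$ together with (MT3) (which we get only from primeness via Proposition \ref{prop4.4}) to exclude closed simple paths without exits. Once this small piece is in place the remainder is just bookkeeping against Theorems \ref{thm3.10}, \ref{thm4.8}, and \ref{thm6.7}.
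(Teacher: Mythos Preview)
Your proof is correct and follows essentially the same approach as the paper: reduce to the prime graded ideals of Theorem \ref{thm4.8}, pass to the quotient via Theorem \ref{thm3.10}(3), and apply Theorem \ref{thm6.7}. The only minor variation is in the second case, where the paper argues directly that every vertex of $E/(\Omega(v),B_{\Omega(v)}\setminus\{v\})$ reaches the sink $v'$ (yielding both (L) and (MT3) at once), whereas you obtain (MT3) from primeness and then use it together with the sink $v'$ to force (L); both routes are equally valid.
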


\begin{proof}
First, recall from Lemma \ref{lem4.6} that if $M\in M(E)$, then $\Omega(M)=E^0\setminus M$ and so we have $E/(\Omega(M),B_{\Omega(M)})=(M,r^{-1}(M),r,s)$. Thus an ideal $I(\Omega(M),B_{\Omega(M)})$ is primitive if and only if the quotient
$$\frac{L_R(E)}{I(\Omega(M),B_{\Omega(M)})}\cong L_R\left(E/(\Omega(M),B_{\Omega(M)})\right)=L_R(M,r^{-1}(M),r,s)$$
is primitive. However, this is equivalent to $M\in M_\gamma(E)$ by Theorem \ref{thm6.7}. Furthermore, if $v\in BV(E)$, the quotient graph $E/(\Omega(v),B_{\Omega(v)}\setminus\{v\})$ contains the sink $v'$. But for every vertex $w\in E^0\setminus\Omega(v)$ we have $w\geq v$. Thus for every vertex $w$ in  $E/(\Omega(v),B_{\Omega(v)}\setminus\{v\})$, there is a path from $w$ to $v'$. This yields that  $E/(\Omega(v),B_{\Omega(v)}\setminus\{v\})$ satisfies Conditions (L) and (MT3). Therefore, Theorem \ref{thm6.7} implies that $L_R(E/(\Omega(v),B_{\Omega(v)}\setminus\{v\}))\cong \lr/I(\Omega(v),B_{\Omega(v)}\setminus\{v\})$ is primitive and so $I(\Omega(v),B_{\Omega(v)}\setminus\{v\})$ is a primitive ideal.
\end{proof}

\begin{corollary}\label{cor6.10}
Let $E$ be a graph satisfying Condition (K) and let $R$ be a field. Then $$\left\{I(\Omega(M),B_{\Omega(M)}),I(\Omega(v),B_{\Omega(v)}\setminus\{v\}):M\in M(E), v\in BV(E)\right\}$$
is the set of all primitive ideals of $L_R(E)$. In particular, the set of prime ideals and the set of primitive ideals of $\lr$ coincide.
\end{corollary}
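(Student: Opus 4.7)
The plan is to assemble three ingredients already proved: that under the hypotheses every ideal of $\lr$ is graded basic, that Proposition \ref{prop6.9} describes the primitive graded ideals, and that Condition (K) forces $M_\gamma(E)=M(E)$. First, since $R$ is a field, every ideal of $\lr$ is basic (if $rx\in I$ with $r\neq 0$ then $x=r^{-1}(rx)\in I$, for $x$ a vertex or of the special form appearing in Definition \ref{defn3.5}). Combined with Theorem \ref{thm3.18}, every ideal is graded basic. In particular, every primitive ideal of $\lr$ is a primitive graded ideal, and conversely, so the set of primitive ideals is exactly the set described in Proposition \ref{prop6.9}, but indexed a priori by $M_\gamma(E)$ and $BV(E)$.

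Second, I would show $M_\gamma(E)=M(E)$ when $E$ satisfies Condition (K). For any $M\in M(E)$, Lemma \ref{lem4.6} gives $\Omega(M)=E^0\setminus M$, so reading off Definition \ref{defn3.6} (noting $B_{\Omega(M)}\setminus B_{\Omega(M)}=\emptyset$) yields the identification $E/(\Omega(M),B_{\Omega(M)})=(M,r^{-1}(M),r,s)$. By Lemma \ref{lem3.15}, Condition (K) on $E$ implies this quotient graph satisfies Condition (L), so $M\in M_\gamma(E)$. The reverse inclusion is trivial, so $M_\gamma(E)=M(E)$, and substituting into Proposition \ref{prop6.9} gives the first assertion of the corollary.

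For the \emph{in particular} part, observe that under the same hypotheses Corollary \ref{cor4.13} together with Theorem \ref{thm4.8} says the set of prime ideals of $\lr$ is exactly the set of prime graded basic ideals, namely
\[
\left\{I(\Omega(M),B_{\Omega(M)}),\,I(\Omega(v),B_{\Omega(v)}\setminus\{v\}):M\in M(E),\,v\in BV(E)\right\}.
\]
This is literally the same list we just produced for the primitive ideals, so the two collections coincide. No appeal to the general fact that primitive implies prime is needed beyond this direct comparison.

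The main obstacle is purely bookkeeping: one has to recognize that the quotient graph $E/(\Omega(M),B_{\Omega(M)})$ simplifies to the induced subgraph on $M$, so that Lemma \ref{lem3.15} can upgrade $M(E)$ to $M_\gamma(E)$. Once that identification is in hand, the corollary is a direct consequence of Theorem \ref{thm3.18}, Proposition \ref{prop6.9}, and Corollary \ref{cor4.13}, with no further computation required.
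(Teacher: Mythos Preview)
Your proposal is correct and follows essentially the same approach as the paper: both arguments use Lemma \ref{lem3.15} to conclude $M_\gamma(E)=M(E)$, then invoke the fact that under Condition (K) with $R$ a field every ideal is graded basic (you cite Theorem \ref{thm3.18} directly, the paper cites its consequence Corollary \ref{cor3.19}), and finally appeal to Proposition \ref{prop6.9}. Your treatment is in fact more complete, since you explicitly justify the ``in particular'' clause by comparing with Theorem \ref{thm4.8}/Corollary \ref{cor4.13}, whereas the paper leaves that comparison implicit.
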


\begin{proof}
Lemma \ref{lem3.15} implies that every quotient graph of $E$ satisfies Conditions (L) and so we have $M_\gamma(E)=M(E)$. On the other hand, by Corollary \ref{cor3.19}, every ideal of $\lr$ is of the form of $I(H,S)$ for some admissible pair $(H,S)$. However, Proposition \ref{prop6.9} characterize all primitive ideals of this form. \end{proof}

\begin{remark}
Since any quotient of $L_R(E)$ by a graded basic ideal belongs to the class of Leavitt path algebras (Theorem \ref{thm3.10}(3)), Theorem \ref{thm6.7} shows that when $R$ is not a field, $\lr$ contains no primitive graded basic ideals. However, in this case, $\lr$ may have some primitive ideals. For example, consider $\Z$ as the Leavitt path algebra of a single vertex $v$ with the coefficients ring $\Z$. Then the ideal $I=\langle 2v\rangle$ is left and right primitive because $\Z/I\cong \Z_2$ is a field.
\end{remark}

%%%%%%%%%%%%%%%%%%%%%%%%%%%%%%%%%%%%%%%%%%%%%%%%

\end{document}